\newcounter{eqnsave}
\def\C{{{{\rm {\mbox{\small l}}} \kern -.50em {\rm C}}}}
\def\I{{{{\rm l} \kern -.10em {\rm I}}}}
\def\R{{{{\rm l} \kern -.15em {\rm R}}}}
\def\N{{{{\rm l} \kern -.15em {\rm N}}}}
\def\E{{{{\rm l} \kern -.15em {\rm E}}}}
\newcommand{\bea}{\begin{eqnarray}}
\newcommand{\beas}{\begin{eqnarray*}}
\newcommand{\ba}{\begin{array}}
\newcommand{\ea}{\end{array}}
\newtheorem{theorem}{Theorem}[section]
\newtheorem{lemma}[theorem]{Lemma}
\newtheorem{corollary}[theorem]{Corollary}
\newtheorem{definition}{Definition}[section]
\newcommand{\qed}{{}\hfill $\Box$ \\[15pt]}
\newcommand{\eea}{\end{eqnarray}}                %
\newcommand{\bas}{\begin{eqnarray*}}            %
\newcommand{\eas}{\end{eqnarray*}}              %
\begin{document}

\title{Blow up of solutions for  a Parabolic-Elliptic Chemotaxis System  with   gradient dependent chemotactic coefficient  }
\author{ J.Ignacio Tello }
\date{}
\maketitle

\begin{abstract}
We consider a Parabolic-Elliptic system of PDE's with a chemotactic term in a $N$-dimensional unit ball describing the behavior of the density of a  biological species
``$u$" and a chemical stimulus ``$v$". The system includes a  
  nonlinear chemotactic coefficient depending of ``$\nabla v$", i.e. the chemotactic term is given in the    form 
$$- div (\chi u |\nabla v|^{p-2} \nabla v),  \qquad  \mbox{ for }  \  p \in ( \frac{N}{N-1},2),   \qquad  N >2  $$
for a positive constant $\chi$   when  $v$  satisfies the poisson equation $$- \Delta v = u - \frac{1}{|\Omega|} \int_{\Omega} u_0dx.$$
We study the radially symmetric solutions under the assumption in the initial mass
$$  \frac{1}{|\Omega|} \int_{\Omega} u_0dx>6.$$
For $\chi$ large enough, we present conditions in the initial data,  such that  any regular  solution of the problem blows up at finite time.  

  \end{abstract}

 \footnotetext[2]{ Departamento de Matem\'{a}ticas Fundamentales, Facultad de Ciencias, 
Universidad Nacional de Educaci\'on a Distancia, 28040 Madrid. Spain}

\section{Introduction}

Chemotaxis is among  the most important processes in {\emph{Natural Sciences}}. It is defined as the biological phenomenon of  living organism in respond to a chemical stimulus, orientating its movement  towards the higher concentration of the chemical or away from it. 

Two main magnitudes appear in the process:
 the concentration of one or several chemical substances and the density of one or several  
biological species. 
 
 From   the pioneering works of Keller and Segel in the $70^{\prime}$s to the present,  PDE$^{\prime}$s  systems of  chemotaxis have  been studied by  a large   number of  authors.  The extensive literature in the field shows the relevance of the problem,  we refer the reader to 
Horstmann \cite{horstmann1}, \cite{horstmann2},  Bellomo et al \cite{bellomo1}, Hillen and Painter \cite{hp}    and references therein
for more details concerning  previous  results  of such systems.  
The classical system proposed in Keller and Segel  \cite{KS} and \cite{KS2} considers a linear dependence  of the chemotactic term respect to the gradient of the chemical substance.
Denoting by $u$ the living organism concentration and by $v$ the chemical stimulus, the original system  reads as follows   
$$ \begin{array}{ll}u_t - \Delta u = - div (\chi u \nabla v) + g(u,v),  &  x \in \Omega, \  t >0, 
\\ \tau v_t - \Delta v = h(u,v), 
 &  x \in \Omega, \  t >0, 
 \end{array}
$$   
for some  constants  $ \chi \in \R$,  $\tau \geq 0$,  and known  functions $g$ and $h$.  To complete the system, Neumann boundary conditions and initial data are given. 

 Nevertheless, any biological individual  presents  a natural limitation to the    velocity of movement,  and therefore, it is not expected that for large values of $\nabla v$, the behavior  of the individuals will be proportional to the case of small values of $\nabla v$. In that sense, a nonlinear term to  limit the growth of $\nabla v$  give us a large range of applications. 

Recently, motivated by different biological phenomena,  
several authors  have considered the chemotactic sensitivity coefficient ``$\chi$"  as a continuous function  of $\nabla v$,  
for instance, in Bellomo and Winkler \cite{bw1} and \cite{bw2} (see also Bellomo et al  \cite{bellomo2}), a chemotaxis  system is analyzed  for a chemotactic term in the form
 $$- div \left(  \frac{\chi  u}{[1+  |\nabla v|^2]^{\frac{1}{2}}}\nabla v\right).$$
 In  \cite{bw2}, the authors prove the existence of blow up for some initial data with nonlinear diffusive term 
 $$-  div \left(\frac{u}{\sqrt{u^2+ |\nabla u|^2}} \nabla u \right)$$
 where $v$ satisfies the elliptic problem 
 $$ - \Delta v = u- \frac{1}{|\Omega|} \int_{\Omega} u_0dx.$$
 Recently, Chiyoda, Mizukami and Yokota \cite{cmy} study the parabolic-elliptic system  for a chemotaxis term 
 $$- div \left(\frac{u^q}{\sqrt{1+ |\nabla v|^2}} \nabla v \right),$$
 where the diffusive term for ``$u$" generalizes the previous model  in \cite{bw2}, 
   $$- div \left(\frac{u^p}{\sqrt{u^2+ |\nabla u|^2}} \nabla u \right).$$
In \cite{cmy},  the authors obtain blow up of solutions for $p,q \geq 1$ by using a sub-solution method.  

 Recently, M. Winkler \cite{winkler} proves blow up of solutions for a general chemotactic term 
$$- div (\chi u f( |\nabla v|^2) \nabla v $$
for a regular function $f$ satisfying 
\begin{equation} \label{lll} f (\xi) >c (1+ |\nabla v|^2)^{-\alpha}, \mbox{ 
where }
\alpha <\frac{N-2}{2(N-1)}. \end{equation}

In Bianchi, Painter and Sherratt \cite{bps}, \cite{bps2}
 the authors consider the term
$$-  div     \left(\frac{\chi u}{(1+\omega u)}  \frac{\nabla v }{(1+ \eta |\nabla v|)}\right) ,$$
for some positive constants $\chi$,  $\omega$ and $\eta$, where a parabolic equation is  coupled to an  ODE modeling Lymphangiogenesis in wound healing in a one-dimensional spatial domain. 

We  express a general chemotactic term in the form 
$$- div  \left[u \tilde{\chi}( u, v, |\nabla v|)\nabla v\right],$$
for a prescribed continuous function  $\tilde{\chi}$.   Several authors have  studied the problem where $\tilde{\chi}$ depends only of $u$ or $v$, such examples can be found for  instance in 
Lauren{\c c}ot and Wrzosek \cite{lw}, Stinner, Tello and  Winkler \cite{stw1}, Negreanu and Tello \cite{nt1},  Stinner and Winkler \cite{sw}  and Winkler \cite{winkler1}  among others.  
In the present article we focus our attention in the case where $\tilde{\chi}$   depends only on $|\nabla v |$ in the following way  
 $$- div  \left[u \tilde{\chi}( u, v, |\nabla v|)\nabla v\right]= - \chi div  \left[u  |\nabla v|^{p-2}\nabla v\right],$$
for   some positive constant $\chi $ and $ p \in (N/(N-1),2)$ for $N>2$. 
Notice that  the chemotactic term presents a singularity when $|\nabla v|=0$.  Assumption  (\ref{lll}) in  \cite{winkler}    is equivalent  to $p \in (N/(N-1), 2)$ in this article (see assumption (\ref{p})).

The previous nonlinearity has been already studied in Negranu and Tello \cite{nt8} and in 
Wang and Li \cite{wangli}. In  \cite{nt8}, the authors consider the  system in a bounded domain $\Omega \subset \R^N$,

\begin{equation} \label{system}  \left\{  	\begin{array}{ll}  
 u_t-\Delta u= -  div  (\chi u|\nabla v|^{p-2}\nabla v), \ &   x\in\Omega,\quad t>0,
 \\     
  -\Delta v  = u-M, & x\in\Omega,\quad t>0, 
\end{array} \right. 
\end{equation} 
with homogeneous Neumann boundary conditions and   non-negative  initial data  satisfying  
\begin{equation} \label{datoinicial1} \frac{1}{|\Omega|}\displaystyle\int_{\Omega} u_0(x)dx=M. \end{equation} 
Under assumptions 
$$ \left\{  \begin{array}{ll}  p \in (1, \infty),   &   \mbox{ if }    N=1, \\
  p\in \left(1, \frac{N}{N-1}\right),  &  \mbox{ if }    N\geq 2,
\end{array}  \right. 
$$
the authors obtain uniform bounds in $L^{\infty}(\Omega)$ for any $t>0$. Similar result is obtained if $v$ satisfies 
$$- \Delta v+v=u, \quad  x\in\Omega. $$ 
The steady states 
of  the one-dimensional case are  also considered in \cite{nt8}, where   infinitely many non-constant solutions appear  for  $p\in (1,2)$ for any   $\chi$ positive and a prescribed  positive  mass.

The parabolic-parabolic equation is  considered in \cite{wangli}  for $v$ satisfying 
$$v_t- \Delta v= -uv, \quad t>0, \quad x\in \Omega 
$$
where  $\Omega $ is a $N$-dimensional bounded domain  for $N \geq 2$  with Neumann boundary conditions and bounded initial data   $u_0$ and $v_0$.  
The authors obtain global existence of weak solutions for initial data $v_0$ satisfying 
$$v_0 \leq \frac{1}{ 4 k_v} , \quad   k_v:= \sup_{s \geq 0} \left\{ \frac{s }{(1+s^2)\ln(1+s)}  \right\} $$
when the exponent $p$ satisfies 
$$N  < \frac{8-2(p-1)}{p-1}, \quad i.e. \quad  p < \frac{N+10}{N+2}.$$
Notice that the non-linear term ``$uv$'' in the equation is not equivalent to the linear term $u-v$ in \cite{nt8}.

In this article we study a mathematical prototype of chemotaxis with flux limitation   in   the $N$-dimensional open  unit ball   $B_N$ defined 
$$ B_N : = \{ x\in \R^N,  \; \;   |x|<1\},$$
  that  we denote by $B$ if not explicitly stated otherwise.
We denote by $\vv{n}$ the  outward pointing normal vector on the boundary $\partial  B $. 
The equation for $v$ is restricted to the elliptic case, for  simplicity, we assume that $v$ satisfies the Poisson equation and  the system studied is the following
\begin{align}{} 
\label{1.1} 
 &u_t-\Delta u = -  div  (\chi u|\nabla v|^{p-2}\nabla v),  & x\in B,\quad t>0, &
 \\[2mm]  \label{1.2}
 & -\Delta v  = u-M, &  x\in B,\quad t>0,  &
  \\[2mm]  \label{1.3}  &
 \displaystyle    \frac{\partial  u}{\partial \vv{n} }=\displaystyle\frac{\partial v}{\partial \vv{n} }=0, &  x\in \partial. B,\quad t>0,  &
\\[2mm]   & u(0,x)=u_0(x),  &   x\in B,  \label{1.4} &
     \end{align}      %
     where $p$ satisfies 
     \begin{equation} \label{p}
 p \in \left( \frac{N}{N-1},2\right), \mbox{ for  $ N >2$}
    \end{equation}
    and $M$, defined in (\ref{datoinicial1}),  fullfils
     \begin{equation} \label{M}
M >6,
    \end{equation}
    i.e.
   $$
\int_{B} u_0 dx >6 |B| =6 \frac{\pi^{\frac{N}{2}  }}{\Gamma( \frac{N}{2} +1)} ,
$$
    and for the three  dimensional  case,  the previous inequality   reads
$$
\int_{B} u_0 dx > 8 \pi .
$$
Notice that the previous assumption is also  assume  in  Semba \cite{semba}, where the author proves    blow up for $p=2$.

    We define the function 
    $$  \phi(0, \rho )=  \left\{
 \begin{array}{ll}   \displaystyle 
  \frac{\rho^{\gamma} }{\rho^{\gamma}+1 }  ,   & 0<\rho \leq \frac{1}{2},  
\\ [4mm] \displaystyle 
  \frac{2^{1- \gamma} }{ 2^{- \gamma} +1} \left( 1-\rho+ (1+ \gamma)\frac{ (\rho_2- \rho)_+(\rho- \frac{1}{2})}{(\rho- \frac{1}{2})} \right),   
&  \frac{1}{2} <\rho < 1, 
\end{array}
 \right.
$$
for $\gamma>1 $ satisfying 
    \begin{equation} \label{gamma} \gamma<  \min\{  1+ \frac{2-p}{N(p-1)} , \frac{ \chi    N^{-p} }{2}  -1 ,   1 + \frac{M-6}{4} ,   \frac{N+1}{N}, \gamma^* \},
\end{equation}
for $\gamma^*>1$ with the following property: 
$$\gamma^* > 1  \mbox{ such that   }  q(\gamma) >\frac{1}{5} \mbox{ for any }  \gamma\in (1, \gamma^*),$$  where   
$$q(\gamma)=  \frac{(\gamma+2) }{4(\gamma+1)^2} \left[   \frac{ 3 \gamma}{2} +1 \right].
$$
    We study the blow up of solutions   under the following assumptions  in the initial data $u_0$
     \begin{equation} \label{Hdato1}
     \mbox{ \emph{$u_0$ is a radial function, } }
     \end{equation}
\begin{equation}
\label{id23} 
u_0 \in C^{1, \alpha}  (B) , \qquad  \frac{\partial u_0 }{ \partial \vv{n} }=0, \quad x\in \partial B, 
\end{equation}
     \begin{equation} \label{Hdato2} 
      \frac{N\pi^{\frac{N}{2}}}{\Gamma(\frac{N}{2} +1)}\int_{ |x| \leq \rho^{1/N}} (u_0(x) -M) dx \geq \phi(0,\rho).
     \end{equation}
     $$ 
  \chi >  N^{p} \max\{4, 
\frac{    3 \cdot  2^{6}   }{    M -6 } \left(\frac{4}{3}\right)^{\frac{2}{N}-p} \} 
  $$ 
  the previous assumption guaranties 
     \begin{equation} \label{chi} 
      \chi_N:= \frac{\chi}{N^{p}}>   \max \left\{4, 
\frac{    \rho_2^{2- \frac{2}{N}}  2^{2}  (1+ \gamma) }{(\rho_2- \frac{1}{2})   (1- \rho_2)   (M -6) }  \left(\frac{4}{3}\right)^{2-p}  \right\} 
     \end{equation}
     in view of $\gamma<2$ and 
$$ \rho_{2}= \frac{2+ \gamma}{2(1+ \gamma)} >\frac{5}{8}.$$
  The main result of the article is enclosed in the following theorem. 
  \begin{theorem} 
  \label{t1} 
  Let $B$ be  the $N$-dimensional open  unit ball in $\R^N,$     then,  under assumption 
  (\ref{p})-(\ref{chi}), there exists a positive number $T_{bu}<\infty$ such that,  
   there exists at least   a  solution $u$  to  problem (\ref{1.1})-(\ref{1.4})  such that the function $\int_{|x|<\rho^{\frac{1}{N}}} (u(t,x)-M)dx$  exists  in $(0,T_{bu})$  
and 
  $$\lim_{t \rightarrow T_{bu} }  \| u \|_{L^{\infty}(B)}   = \infty, $$ for some $ T_{bu} \leq T_{max}:= \frac{1}{\epsilon}$ with  $\epsilon$ defined 
    as a function of $p$, $N$, $M$ and   $\chi$ in (\ref{epsilon}). 
  \end{theorem}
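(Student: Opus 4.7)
The plan is a classical sub-/super-solution argument for radial chemotaxis, adapted to the flux-limited term. First I would exploit the radial symmetry: letting $r=|x|$ and introducing the (normalized) cumulative mass
\begin{equation*}
w(t,\rho):=\frac{N\pi^{N/2}}{\Gamma(N/2+1)}\int_{|x|\le\rho^{1/N}}(u(t,x)-M)\,dx,\qquad\rho\in[0,1],
\end{equation*}
so that (\ref{Hdato2}) reads exactly $w(0,\rho)\ge\phi(0,\rho)$, one integrates (\ref{1.2}) over balls of radius $r$ to obtain $-r^{N-1}v_{r}\propto w$, whence $|\nabla v|^{p-2}v_{r}$ is explicit in $w$ and $r$. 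Inserting this into (\ref{1.1}) and integrating $s^{N-1}u_{t}$ from $0$ to $r=\rho^{1/N}$ yields, after using $w_{\rho}\propto(u-M)$, a scalar quasilinear equation of the schematic form
\begin{equation*}
w_{t}=c_{1}\,\rho^{\frac{2(N-1)}{N}}\,w_{\rho\rho}+c_{2}\,\chi\,(w_{\rho}+c_{3}M)\,w^{p-1}\,\rho^{\frac{(N-1)(2-p)}{N}}
\end{equation*}
on $(0,1)$ with positive constants $c_{i}=c_{i}(N,p,|B|)$, together with boundary conditions $w(t,0)=0$ (regularity at the origin) and $w(t,1)=0$ (mass conservation from (\ref{datoinicial1})).

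Next I would construct a time-dependent sub-solution $\phi(t,\rho)$ extending the initial profile of (\ref{Hdato2}). The natural ansatz, suggested by $\phi(0,\rho)=\rho^{\gamma}/(\rho^{\gamma}+1)$ on $(0,1/2]$, is to replace the constant $1$ in the denominator by a decreasing function $\sigma(t)=1-\varepsilon t$, giving
\begin{equation*}
\phi(t,\rho)=\frac{\rho^{\gamma}}{\rho^{\gamma}+\sigma(t)}\quad\text{on }\,(0,\tfrac12],\qquad\sigma(0)=1,
\end{equation*}
with a piecewise extension on $(1/2,1)$ that decays linearly to $0$ at $\rho=1$, exactly as in the definition of $\phi(0,\cdot)$. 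The positive constant $\varepsilon=\varepsilon(p,N,M,\chi)$---the one referred to in (\ref{epsilon})---will be identified in the verification step so that $\phi$ remains a sub-solution for $t<1/\varepsilon=T_{max}$; at that formal time $\sigma\downarrow 0$ and $\phi(\cdot,\rho)\uparrow 1$ for every fixed $\rho\in(0,1/2]$.

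The main technical step is to verify, piecewise in $\rho$, the differential inequality
\begin{equation*}
\phi_{t}\le c_{1}\,\rho^{\frac{2(N-1)}{N}}\,\phi_{\rho\rho}+c_{2}\,\chi\,(\phi_{\rho}+c_{3}M)\,\phi^{p-1}\,\rho^{\frac{(N-1)(2-p)}{N}}.
\end{equation*}
Direct differentiation gives $\phi_{t}=\varepsilon\rho^{\gamma}/(\rho^{\gamma}+\sigma)^{2}$, $\phi_{\rho}=\gamma\sigma\rho^{\gamma-1}/(\rho^{\gamma}+\sigma)^{2}$, a two-sign expression for $\phi_{\rho\rho}$, and $\phi^{p-1}=\rho^{\gamma(p-1)}/(\rho^{\gamma}+\sigma)^{p-1}$. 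Each restriction in (\ref{gamma}) then plays its own role: (i) $\gamma<1+(2-p)/(N(p-1))$ matches the $\rho$-exponents of $\phi^{p-1}\rho^{(N-1)(2-p)/N}$ against those of $\phi_{t}$ and $\rho^{2(N-1)/N}\phi_{\rho\rho}$; (ii) $\gamma<\chi/(2N^{p})-1$ makes the chemotactic drift large enough to dominate the diffusion in the concave region of $\phi$ (where $\phi_{\rho\rho}<0$); (iii) $\gamma<1+(M-6)/4$, combined with (\ref{M}), lets the $M$-drift beat the remaining lower-order terms; (iv) the $\gamma^{\ast}$-bound keeps $q(\gamma)>1/5$, which is what one needs on the matching interval around $\rho_{2}=(2+\gamma)/(2(1+\gamma))$. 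The hypothesis (\ref{chi}) on $\chi$ is precisely what closes the inequality on the outer zone $(1/2,1)$, where the second piece of $\phi$ is affine in $\rho$ and the chemotactic drift must dominate by itself. The junctions at $\rho=1/2$ and $\rho=\rho_{2}$ are handled by the elementary fact that the pointwise minimum of two classical sub-solutions is again a (weak) sub-solution; bundling the estimates isolates an explicit $\varepsilon=\varepsilon(p,N,M,\chi)$ for which the differential inequality holds on $(0,1)\setminus\{1/2,\rho_{2}\}$.

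With $\phi$ a sub-solution, a standard parabolic comparison applied to $w-\phi$---quasi-monotone in the sign regime $w,\phi\ge 0$ because the gradient-dependent drift $(w_{\rho}+c_{3}M)w^{p-1}$ is locally Lipschitz in $(w,w_{\rho})$ there---together with $w(0,\cdot)\ge\phi(0,\cdot)$ from (\ref{Hdato2}) and the matching boundary values at $\rho=0$ and $\rho=1$, yields $w(t,\rho)\ge\phi(t,\rho)$ on the maximal existence interval of $u$. Since $\phi(t,\rho)\uparrow 1$ as $t\uparrow 1/\varepsilon$ for every fixed $\rho\in(0,1/2]$, while a bounded $u$ would force $w(t,\rho)\to 0$ uniformly in $t$ as $\rho\to 0^{+}$, the solution must leave $L^{\infty}(B)$ strictly before $T_{max}$, producing some $T_{bu}\le T_{max}=1/\varepsilon$ at which $\|u(t,\cdot)\|_{L^{\infty}(B)}\to\infty$. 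The main obstacle is the bookkeeping of the sub-solution inequality in the transition and outer zones $(1/2,\rho_{2})$ and $(\rho_{2},1)$, where $\phi$ is no longer of the convenient concentration form and where all the algebraic conditions (\ref{gamma})--(\ref{chi}) must fit together without slack; a secondary technical point is the careful justification of the comparison principle in the presence of the gradient-dependent, non-Lipschitz-at-$w=0$ advection, which is handled by working at the first touching time and using the positivity of $w$ and $\phi$ there.
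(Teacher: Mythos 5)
Your overall strategy coincides with the paper's: pass to the mass accumulation variable, derive the scalar degenerate equation (\ref{eqU}), build a time--dependent concentration--type sub-solution, compare, and read off blow-up from $\phi\to 1$ near the origin. However, there is a genuine gap in the central construction. You take the denominator of the inner profile to decay \emph{linearly}, $\sigma(t)=1-\varepsilon t$, so that $\phi_t=\varepsilon\rho^{\gamma}/(\rho^{\gamma}+\sigma)^2$. In the regime $\rho^{\gamma}\sim\sigma(t)$ with $\sigma$ small this term is of size $\varepsilon\,\sigma^{-1}$, whereas the absorbing (negative) contributions are of size $\chi_N\gamma\,\sigma^{F/\gamma-p}$ for the gradient part and $\chi_N M\,\sigma^{E/\gamma-(p-1)}$ for the $M$-part, with $E=(p-1)(\gamma-1+\tfrac1N)+1-\tfrac1N>0$ and $F=E+\gamma-1$. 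One checks that $F/\gamma-p\le-1$ forces $\gamma\le(p-1)+(2-p)/N<1$, and $E/\gamma-(p-1)\le-1$ is likewise impossible; since (\ref{gamma}) requires $\gamma>1$, neither term can dominate $\varepsilon\sigma^{-1}$, and the good part of the diffusion term fails for the same reason. So the differential inequality $\mathcal L(\phi)\le0$ is violated for small $\rho^{\gamma}\sim\sigma$, and no choice of $\varepsilon$ repairs it. This is exactly why the paper takes $a(t)=(1-\epsilon t)^{1/(1-\theta)}$ with $\theta=(3-p)/2>2-p$, so that $a'=-\tfrac{\epsilon}{1-\theta}a^{\theta}$ and $\phi_{1t}$ carries the extra factor $a^{\theta}$; the three-case analysis of Lemma \ref{l3.1} (split according to whether $a\ge\rho^{\gamma}$, $\rho^{2\gamma}\le a\le\rho^{\gamma}$, or $a<\rho^{2\gamma}$) hinges on precisely this factor. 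Your ansatz must be replaced by the nonlinear one before the rest of the argument can proceed.

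Two smaller points. First, your treatment of the junctions invokes the fact that the minimum of two sub-solutions is a sub-solution; for parabolic operators it is the \emph{maximum} of sub-solutions that is again a sub-solution, and in any case the paper does something different: it chooses $\kappa=1+\gamma$ so that $\phi_\rho$ is continuous at $\rho_1=\tfrac12$, and verifies that the jump of $\phi_\rho$ at $\rho_2$ is \emph{upward}, so that $-\phi_{\rho\rho}$ contributes a nonpositive Dirac mass and $\mathcal L(\phi)\le0$ holds in the sense of distributions. Second, the comparison step is more delicate than ``locally Lipschitz away from $w=0$'': since $p-1\in(0,1)$, the nonlinearity is genuinely non-Lipschitz at the boundary points $\rho=0,1$ where $U=\phi=0$, and the paper closes this by a weighted $L^2$--Gronwall argument with weight $\rho^{2/N-\delta}$, $\delta\in(1,\gamma)$, together with the Hardy inequality of Lemma \ref{lemma4.2} and the explicit decay $\phi\le c\rho^{\gamma}$, $\phi\le c(1-\rho)$. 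A ``first touching time'' argument would need to be justified against the degeneracy of $\rho^{2-2/N}\partial_\rho^2$ at $\rho=0$ and is not supplied.
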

  Notice that, as a consequence of the previous theorem, the classical solutions of the problem, for initial data satisfying (\ref{Hdato1})-(\ref{Hdato2}) do not exists (in a classical sense)   for $t \geq  T_{bu}$.

  The article is organized as follows:   In section \ref{s2}, we  introduce  the mass accumulation function $U$ and we deduce  the  equation satisfied by $U$.   In Section \ref{s2.1}, the local existence of solutions and continuous  regularity is given for the mass accumulation function $U$.   
    Section \ref{s3} is devoted to the construction of a subsolution (denoted by $\phi$) and its properties. 
 In Section \ref{s5},  the comparison result is given for the  equation obtained in Section \ref{s2}. Uniqueness of solutions is obtained following the steps of the  comparison results given in the same section. Finally,  in the last section,  the end of the proof of the theorem is presented.   
 
 The key of the proof of the results is the sub-solution $\phi$, such function have been  constructed by   modifying the sub-solution presented in J\"ager and Luckhaus \cite{jl}
 where the authors prove finite time blow up for the minimal Keller-Segel system, i.e. for $p=2$.  
     
    %
  %
  %
  %
  %
  %
  %
  %
  
  %
  %
  
  %
  %
  %
  %
  %
  %
  %
  %
  %
  %
  %
  %
  %
  %
  %
  %
  %
  %
  %
  
  %
  %
  %
  
  %
  %
  \section{Equation of the mass accumulation  for radial symmetric solutions} \label{s2}
\setcounter{equation}{0}
 Let $u$ be the solution to  (\ref{1.1})-(\ref{1.4}) 
 and  
%
   $$\omega_{N-1}:=|S^{N-1}| = \frac{ N \pi^{\frac{N}{2}}}{\Gamma( \frac{N}{2}+1)} ,$$
    the (N-1)-dimensional volume of the sphere $S^{N-1}$ (the surface of the N-dimensional ball)   
for  the well known function $\Gamma$,  already defined by Euler in 1729  and  given by 
$$\Gamma(  z):=\int_{0}^{\infty} t^{z-1} e^{-z} dt, \qquad  {\cal{R}}e(z)>0.$$
Let  $\tilde{u}(t,r)$  be  defined by $\tilde{u}(t,|x|):=u(t,x)$ for a  radially symmetric function $u$. For simplicity we drop the tilde and  introduce the following change of unknowns
\begin{equation} \label{U}
U(t, \rho):= \int_{|x|< \rho^{1/N}} (u(t,x) -M) dx =  \omega_{N-1}\int_{0}^{ \rho^{1/N}} (u(t,r) -M) r^{N-1}dr ,
\end{equation} for $M$ defined in 
(\ref{datoinicial1}).
Notice that, thanks to Leibniz$^{\prime}$s rule we obtain 
$$ \begin{array}{lll}
U_{\rho} & =& \displaystyle  \frac{\partial }{\partial  \rho}\int_{|x|<\rho^{1/N}} (u(t,x) -M) dx
 \\ [4mm] & =& \displaystyle
 \frac{\partial }{\partial \rho}  \omega_{N-1} \int_{0}^{\rho^{1/N}} (u(t,r) -M) r^{N-1}dr 
\\ [4mm]  & =& \displaystyle  \frac{\omega_{N-1}}{N} [u(t,\rho^{1/N}) -M] \rho^{\frac{N-1}{N}}  \rho^{\frac{1}{N} -1} 
\\ [4mm] 
 & =& \displaystyle  \frac{\omega_{N-1}}{N} [u(t,\rho^{1/N}) -M] ,  \end{array}  $$ 
 therefore 
\begin{equation} \label{uss} U_{\rho \rho}=  \frac{\omega_{N-1}}{N}  \frac{\partial }{\partial \rho} u(t,\rho^{1/N}) \end{equation} 
and 
\begin{equation} \label{u}   u(t,\rho^{1/N})  =  \frac{N}{\omega_{N-1}} U_\rho +M . \end{equation} 
We have that 
 $$\begin{array}{lll} \displaystyle -\int_{|x|<\rho^{1/N} } \Delta u dx & = & \displaystyle -  \omega_{N-1} \int_{0}^{\rho^{1/N} } \left[ r^{1-N} \frac{\partial }{\partial r} \left(r^{N-1} \frac{\partial u}{\partial r} \right) \right] r^{N-1} dr  
 \\ [4mm]  & = & \displaystyle -  \omega_{N-1} \int_{0}^{\rho^{1/N} } \frac{\partial }{\partial r} \left(r^{N-1} \frac{\partial u}{\partial r} \right) dr $$
 \\ [4mm]  & = & \displaystyle -   \omega_{N-1} \rho^{\frac{N-1}{N}} \frac{\partial u}{\partial  \rho^{1/N}} $$
 \\ [4mm]  & = & \displaystyle -   \omega_{N-1} N \rho^{\frac{2N-2}{N} } \frac{\partial u}{\partial  \rho}. 
 \end{array}  $$
Thanks to (\ref{uss}) we have 
\begin{equation}
\label{deltau}
-\int_{|x|<\rho^{1/N} } \Delta u dx = - N^2 \rho^{\frac{2N-2}{N} } U_{\rho \rho} .
\end{equation}
The term 
$$\begin{array}{lll}  \displaystyle  \int_{|x|< \rho^{\frac{1}{N}}} div (   u|\nabla v|^{p-2}\nabla v) dx  & = & \displaystyle   \omega_{N-1}  
  \int_{0}^{\rho^{\frac{1}{N}}}  r^{1-N} \frac{\partial }{\partial r} \left(  r^{N-1}  u \left| \frac{\partial v}{\partial r}  \right|^{p-2}
\frac{\partial v}{\partial r} \right) r^{N-1} dr 
\\ [4mm] & = & \displaystyle   \omega_{N-1}   \int_{0}^{\rho^{\frac{1}{N}}}  \frac{\partial }{\partial r} \left(  r^{N-1}  u \left| \frac{\partial v}{\partial r}  \right|^{p-2}
\frac{\partial v}{\partial r} \right) dr 
\\ [4mm] 
& =  &  \displaystyle \omega_{N-1}    \left(  \rho^{\frac{N-1}{N}}  u \left| \frac{\partial v}{\partial  \rho^{\frac{1}{N}}}  \right|^{p-2}
\frac{\partial v}{\partial \rho^{\frac{1}{N}}} \right) 
\\ [4mm] 
& =  &  \displaystyle  \omega_{N-1}   N^{p-1}   \left(  \rho^{p \frac{N-1}{N}}  u(t,\rho^{1/N})  \left| \frac{\partial v}{\partial  \rho }  \right|^{p-2}
\frac{\partial v}{\partial \rho } \right) .
\end{array}
$$
As before we have that 
\begin{equation}
\label{deltav}
-\int_{|x|<\rho^{1/N} } \Delta v dx =-   \omega_{N-1} N \rho^{\frac{2N-2}{N} } \frac{\partial v}{\partial  \rho}  = U(\rho^{1/N},t).
\end{equation}
i.e. 
\begin{equation}
\label{v2}
\frac{\partial v}{\partial  \rho}  = - \frac{ \rho^{\frac{2-2N}{N}}}{N \omega_{N-1}}U(\rho^{1/N},t).
\end{equation}
Then
$$\begin{array}{l}  \displaystyle  -\chi \int_{|x|<\rho^{1/N}} div (   u|\nabla v|^{p-2}\nabla v) dx 
= - \omega_{N-1} \chi  N^{p-1}   \left(  \rho^{p \frac{N-1}{N}}  u(t,\rho^{1/N})  \left| \frac{\partial v}{\partial  \rho }  \right|^{p-2}
\frac{\partial v}{\partial \rho } \right)
\\ [4mm] \displaystyle 
= \omega_{N-1}^{2-p}  \chi    \rho^{p \frac{N-1}{N}  +(p-1)\frac{2-2N}{N} }   \left(  \frac{N}{\omega_{N-1}} U_\rho(t,\rho^{1/N})+M  \right)   \left| U(\rho^{1/N},t)  \right|^{p-2}
U(\rho^{1/N},t) 
\\ [4mm] \displaystyle 
= \omega_{N-1}^{2-p}  \chi    \rho^{(2-p) \frac{N-1}{N}   }   \left(  \frac{N}{\omega_{N-1}} U_\rho(t,\rho ^{1/N})+M  \right)   \left| U(t, \rho^{1/N})  \right|^{p-2}
U(t, \rho^{1/N}). 
\end{array}
$$
After integration in (\ref{1.1}), thanks to (\ref{uss})-(\ref{v2}) and the last equation,   we get 
\begin{equation}
\label{U4} \begin{array}{l}
U_t  -  N^2 \rho^{\frac{2N-2}{N} } U_{\rho \rho }= \\ [4mm]    \    \    \     \hspace{2cm}      
 \omega_{N-1}^{2-p}  \chi    \rho^{(2-p) \frac{N-1}{N}   }   \left(  \frac{N}{\omega_{N-1}} U_\rho(t,\rho^{1/N})+M  \right)   \left| U(t, \rho^{1/N})  \right|^{p-2}
U(t, \rho^{1/N}) \end{array}
\end{equation}
with the boundary condition 
$$U(t,0)=U(t,1)=0$$
and the initial data
 $$U(0,\rho)= \int_{|x|<\rho^{1/N}} (u_0-M)dx.$$
We re-escale the problem in the following way 
$$\tilde{t}=  N^2 {t}, \qquad   \tilde{U } =   \frac{N}{\omega_{N-1}}   {U}  $$ to get
 $$
\tilde{U}_{\tilde{t}}  -  \rho^{\frac{2N-2}{N} } \tilde{U}_{\rho \rho }=  
  \chi N^{-p }    \rho^{(2-p) \frac{N-1}{N}   }      \left(   \tilde{U}_{\rho} +M  \right)   \left| \tilde{U} \right|^{p-2}
\tilde{U}.  $$
For simplicity, we drop the tilde
and introduce the constant $\chi_{_N}$, already defined in (\ref{chi})  
$$ \chi_{_N}=    \chi N^{-p} 
$$
to get 
\begin{equation}
\label{eqU}
 {U}_{{t}}  -  \rho^{\frac{2N-2}{N} } {U}_{\rho \rho }=  
  \chi _{_N}    \rho^{(2-p) \frac{N-1}{N}   }   \left(  {U}_{\rho} +M  \right)   \left|  {U} \right|^{p-2}
 {U}.
\end{equation}
The problem is completed with the boundary conditions 
\begin{equation} 
\label{boundaryconditions} 
U(t,0)=U(t,1)=0
\end{equation}
and the initial data 
\begin{equation} 
\label{iniU}
U(0,\rho)= \int_{|x|<\rho^{1/N}} (u_0-M)dx.
\end{equation}

  \section{A priori estimates and local existence of weak solutions}  
  \label{s2.1} 
    \setcounter{equation}{0}
In this section we obtain some a priori estimates of the solution and prove the local existence of weak solutions for equation (\ref{eqU}).  First,   we introduce the  new  variables  $s$ and $W$ defined  by 
  $$ \rho= s^{N}, \qquad  W(t,s)= s^{-N}U(t, s^N) $$
  then,  
   $$ \begin{array}{lll}   \displaystyle \frac{\partial W }{\partial s}  & = & \displaystyle  -N s^{-N-1}U(t,s^N)+    
   N s^{-1} \frac{ \partial  U}{\partial \rho },    \\ [4mm]  \displaystyle
         \frac{\partial^2 W }{\partial s^2}  & = & \displaystyle  N(N+1) s^{-N-2}U(t,s^N)  -N(N+1) s^{-2}U_{\rho} +
   N^2s^{N-2} \frac{ \partial^2  U}{\partial \rho^2 } , \end{array}  $$  
   i.e. 
    $$  \begin{array}{rll} \displaystyle 
    \frac{ \partial  U}{\partial \rho } & = & \displaystyle  \frac{s}{N}  \frac{\partial W }{\partial s} + s^{-N}U(t,s^N) =  \frac{s}{N}  \frac{\partial W }{\partial s} + W, 
     \\ [4mm] 
     U_{\rho \rho}  & = & \displaystyle  \frac{s^{2-N}}{N^2} \frac{\partial^2 W }{\partial s^2} -\frac{N+1}{N} s^{-2N}U(t,s^N)  + \frac{N+1}{N} s^{-N}U_{\rho}
 \\ [4mm] 
 & = & \displaystyle   \frac{s^{2-N}}{N^2} \frac{\partial^2 W }{\partial s^2} -\frac{N+1}{N} s^{-N}W  +\frac{N+1}{N} s^{-N}[ \frac{s}{N}  \frac{\partial W }{\partial s} + W   ]
\\ [4mm]  & =& \displaystyle \frac{s^{2-N}}{N^2} \frac{\partial^2 W }{\partial s^2}  +
\frac{N+1}{N^2} s^{-N+1}\frac{\partial W }{\partial s} ,     
\\ [4mm]  \displaystyle \rho^{1- \frac{2}{N}}U_{\rho \rho} & = &\displaystyle \frac{1}{N^2}
 \left[ \frac{\partial^2 W }{\partial s^2}  +\frac{N+1}{s}  \frac{\partial W }{\partial s} \right].  
 \end{array}
 $$
We replace in (\ref{eqU}) and multiply by $s^{-N}$  to obtain   
  \begin{equation}
\label{eqU22}
 {W}_{{t}}  -  N^{-2} \left[ {W}_{ ss  } + \frac{N+1}{s} W_s \right] =   
  \chi _{_N}    s^{p-2   }   \left( \frac{s}{N}   W_{s} +W+M  \right)   \left|  W \right|^{p-2}
 {W}
\end{equation}
with the corresponding Dirichlet boundary conditions and initial data. 
Notice that as far as $u$ is bounded,   we have that 
$$ |U| \leq c  \rho, \quad \mbox{ and } \quad  |W| \leq c .$$  
The proof of the existence of solutions is based on the Hardy inequality 
$$ \int_0^1 \rho^{ - \delta   } u^2d \rho \leq c \int_0^1 \rho^{ 2- \delta   } |u_{\rho}|^2d \rho.$$
For readers convenience,       we introduce the details  of the proof in the following lemma. 
    \begin{lemma} \label{lemma4.2}  Let $I=(0,1)$ and $\delta \in (0, 1) \cup (1, \infty) $  then, 
    for any function $u \in H^1_{\rho^{2- \delta}}(I) $ such that $u(1)=0$ and $\lim_{\rho \rightarrow 0}  \rho^{1- \delta}u^2=0$,  we have 
     \begin{equation}  \label{eq} \int_I \rho^{ - \delta   } u^2d \rho \leq  \frac{1}{\epsilon_0( |1- \delta| - \epsilon_0)} \int_I \rho^{ 2- \delta   } |u_{\rho}|^2d \rho \end{equation}
    for any $\epsilon_0>0$ such that $$   \epsilon_0 < |1- \delta|.    $$    
   \end{lemma}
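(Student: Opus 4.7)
The plan is to establish this weighted Hardy inequality by a standard integration-by-parts argument, exploiting the identity
$$\rho^{-\delta} = \frac{1}{1-\delta}\frac{d}{d\rho}\bigl(\rho^{1-\delta}\bigr),$$
which is valid precisely because $\delta \neq 1$. Multiplying by $u^2$ and integrating over $I = (0,1)$ gives
$$(1-\delta)\int_I \rho^{-\delta} u^2\,d\rho = \int_I \frac{d}{d\rho}\bigl(\rho^{1-\delta}\bigr) u^2\,d\rho.$$
An integration by parts on the right-hand side produces boundary terms at $\rho=1$ and $\rho=0$: the former vanishes since $u(1)=0$, and the latter vanishes by the hypothesis $\lim_{\rho \to 0} \rho^{1-\delta} u^2 = 0$. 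What remains is
$$(1-\delta)\int_I \rho^{-\delta} u^2\,d\rho = -2\int_I \rho^{1-\delta} u\, u_\rho\,d\rho,$$
so that, passing to absolute values,
$$\int_I \rho^{-\delta} u^2\,d\rho \;\leq\; \frac{2}{|1-\delta|}\int_I \rho^{1-\delta}|u||u_\rho|\,d\rho.$$

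Next I would split the integrand symmetrically as $\rho^{1-\delta}|u||u_\rho| = \bigl(\rho^{-\delta/2}|u|\bigr)\bigl(\rho^{(2-\delta)/2}|u_\rho|\bigr)$ and apply Young's inequality with a free parameter $\epsilon_0 > 0$, giving
$$\rho^{1-\delta}|u||u_\rho| \;\leq\; \frac{\epsilon_0}{2}\rho^{-\delta}u^2 + \frac{1}{2\epsilon_0}\rho^{2-\delta}u_\rho^2.$$
Substituting back and moving the $\rho^{-\delta}u^2$ contribution to the left yields
$$\left(1 - \frac{\epsilon_0}{|1-\delta|}\right)\int_I \rho^{-\delta}u^2\,d\rho \;\leq\; \frac{1}{\epsilon_0 |1-\delta|}\int_I \rho^{2-\delta}u_\rho^2\,d\rho.$$
Under the stated restriction $\epsilon_0 < |1-\delta|$ the prefactor on the left is positive, and dividing through gives exactly the constant $1/\bigl(\epsilon_0(|1-\delta|-\epsilon_0)\bigr)$ advertised in \eqref{eq}.

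The argument is essentially routine and no step is a real obstacle: the weighted Sobolev hypothesis $u \in H^1_{\rho^{2-\delta}}(I)$ guarantees that every integral appearing is finite, the vanishing of the two boundary contributions is directly encoded in the hypotheses, and the absorption condition $\epsilon_0 < |1-\delta|$ is precisely what the statement assumes. The only technicality worth flagging is the rigorous justification of the integration by parts near $\rho = 0$, which is handled by a standard density / cut-off argument (approximate $u$ by functions supported in $[\eta, 1]$ and pass to the limit using the assumed decay of $\rho^{1-\delta}u^2$); once this is dispatched the rest is algebra.
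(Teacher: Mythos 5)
Your proof is correct and is essentially the same argument as the paper's: an integration by parts producing the cross term $\int_I \rho^{1-\delta} u\,u_\rho\,d\rho$ (with the boundary contributions killed by $u(1)=0$ and $\lim_{\rho\to 0}\rho^{1-\delta}u^2=0$), followed by a Young-type splitting with the free parameter $\epsilon_0$, which is algebraically the same as the paper's device of expanding $0\le \left|[\rho^{\pm\epsilon_0}u]_\rho\right|^2$. The only cosmetic difference is that you treat $\delta<1$ and $\delta>1$ in one stroke via $|1-\delta|$, whereas the paper runs the two cases separately.
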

   \begin{proof}
      We consider first the case $\delta <1$ and 
   take $\epsilon_0>0 $ such that 
   $$\epsilon_0 < 1- \delta .$$ Then, 
   $$[\rho^{\epsilon_0}u ]_{\rho} = \epsilon_0 \rho^{ \epsilon_0-1} u + \rho^{\epsilon_0}u_{\rho}, 
   $$
   we take squares in the  previous equation  and it results 
   $$ 0\leq | [\rho^{\epsilon_0}u ]_{\rho}|^2 \leq \epsilon_0^2 \rho^{ 2\epsilon_0-2} u^2 + 
   \rho^{2 \epsilon_0}|u_{\rho}|^2+ 2\epsilon_0 \rho^{ 2\epsilon_0-1} u u_{\rho} .
   $$
   We now multiply by $\rho^{-2 \epsilon_0 + 2-\delta}$  and integrate over $I$, 
   $$0 \leq   \epsilon_0^2  \int_I \rho^{ - \delta } u^2 d\rho + 
  \int_I  \rho^{2 - \delta }|u_{\rho}|^2 d \rho+ 2\epsilon_0 \int_{I}  \rho^{ 1- \delta } u u_{\rho} d\rho.$$
  Since 
 $$ \begin{array}{lll}  \displaystyle 2\epsilon_0 \int_{I}  \rho^{ 1- \delta } u u_{\rho} d\rho  & =  & \displaystyle  \epsilon_0 \int_{I}  \rho^{ 1- \delta } (u^2)_{\rho} d\rho
 \\ 
    [4mm] 
    & =& \displaystyle  - \epsilon_0 (1- \delta) \int_{I}  \rho^{ - \delta } u^2d \rho
    \end{array}$$
   then, 
    $$0 \leq   \epsilon_0 (\epsilon_0 -1 + \delta  )  \int_I \rho^{ - \delta } u^2 d\rho + 
  \int_I  \rho^{2 - \delta}|u_{\rho}|^2 d \rho.$$
We divide  the previous inequality by 
$\epsilon_0 (\epsilon_0 -1 + \delta)$ and the proof of 
  (\ref{eq}) ends   for  $ \delta<1$. 
  \newline 
  To prove the case $\delta >1$ we consider
  $$\epsilon_0 <  \delta-1 $$ and the equation
   $$[\rho^{-\epsilon_0}u ]_{\rho} = -\epsilon_0 \rho^{ -\epsilon_0-1} u + \rho^{-\epsilon_0}u_{\rho}.
   $$
   We proceed as before to get 
    $$ 0 \leq \epsilon_0^2 \rho^{ -2\epsilon_0-2} u^2 + 
   \rho^{-2 \epsilon_0}|u_{\rho}|^2- 2\epsilon_0 \rho^{ -2\epsilon_0-1} u u_{\rho} .
   $$
  We multiply by 
  $\rho^{2 \epsilon_0 + 2-\delta}$  and integrate over $I$ to obtain, after integration by parts 
  $$ 0\leq  \epsilon_0^2\int_{I} \rho^{ -\delta } u^2 d \rho + 
  \int_I \rho^{2 - \delta }|u_{\rho}|^2d \rho+ \epsilon_0(1-\delta ) \int_I \rho^{ -\delta }  u^2 d \rho.
   $$
  Since $\delta >1$ and $\epsilon_0<\delta -1$, the proof ends after dividing by $\epsilon_0( \delta -1-\epsilon_0 )$.
\qed
   \end{proof}
  \begin{corollary} \label{cor4.3}
   Under assumptions of Lemma \ref{lemma4.2} we have 
$$ \int_I \rho^{ - \delta   } u^2d \rho \leq  \frac{4}{| 1- \delta|^2} \int_I \rho^{ 2- \delta   } u_{\rho}^2d \rho.
$$ 
 \end{corollary}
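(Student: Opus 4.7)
The plan is simply to optimize the constant produced by Lemma \ref{lemma4.2}. Setting $a = |1-\delta|$, the inequality (\ref{eq}) reads
$$\int_I \rho^{-\delta} u^2 \, d\rho \le \frac{1}{\epsilon_0(a-\epsilon_0)} \int_I \rho^{2-\delta} u_\rho^2 \, d\rho,$$
valid for every $\epsilon_0 \in (0, a)$. Since this estimate is already available to us from Lemma \ref{lemma4.2}, the only task is to choose the value of $\epsilon_0$ that yields the sharpest (smallest) constant on the right-hand side.

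Minimizing $1/[\epsilon_0(a-\epsilon_0)]$ on $(0, a)$ is equivalent to maximizing the quadratic $f(\epsilon_0) := \epsilon_0(a - \epsilon_0) = a\epsilon_0 - \epsilon_0^2$. Differentiating, $f'(\epsilon_0) = a - 2\epsilon_0 = 0$ gives the unique critical point $\epsilon_0 = a/2$, which lies in the admissible interval $(0, a)$ and is a maximum since $f''<0$. The corresponding maximal value is $f(a/2) = a^2/4 = |1-\delta|^2/4$.

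Substituting this choice of $\epsilon_0$ back into Lemma \ref{lemma4.2} yields
$$\int_I \rho^{-\delta} u^2 \, d\rho \le \frac{1}{a^2/4} \int_I \rho^{2-\delta} u_\rho^2 \, d\rho = \frac{4}{|1-\delta|^2} \int_I \rho^{2-\delta} u_\rho^2 \, d\rho,$$
which is the claimed inequality. There is no real obstacle here; the corollary is just the optimized form of the parametric estimate in Lemma \ref{lemma4.2}, and the only computation needed is the elementary maximization of the quadratic $\epsilon_0 \mapsto \epsilon_0(a-\epsilon_0)$.
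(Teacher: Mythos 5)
Your proposal is correct and is essentially identical to the paper's proof, which simply takes $\epsilon_0 = \frac{|1-\delta|}{2}$ in Lemma \ref{lemma4.2}; your additional remark that this choice maximizes $\epsilon_0(|1-\delta|-\epsilon_0)$ and hence gives the best constant obtainable from the lemma is a harmless elaboration of the same step.
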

 \begin{proof} The proof of Corollary \ref{cor4.3} is an immediate consequence of Lemma \ref{lemma4.2} for   $\epsilon_0= \frac{|1- \delta| }{2}$.
 \qed
 \end{proof}  
   \begin{definition} 
  \label{def} 
  Let  $I=(0,1)$,  $I_T=(0,T) \times I$, then,  for any  initial data   $W_0 \in H^2_{s^{N+1}} (I) \cap H^1_{0, s^{N+1}}(I)$,   
   a {\em weak solution} of (\ref{eqU22}) in $  I_T $ is  a function
\begin{equation} \label{existencia} 
	W\in L^{\infty} ((0,T): H^1_{0,s^{N+1}} (I)) \cap H^1((0,T): L^2_{s^{N+1}} (I)) \cap L^2((0,T):
H^2_{s^{N+1}} (I) ) 
\end{equation}
and 
$$
	W\in   C^0([0,T): L^2_{s^{N+1}} (I))
$$
  such that  $W:  I_T \rightarrow \R$
  satisfies
$$ \begin{array}{l} \displaystyle -  \int_{I_T} \zeta_t W s^{N+1} ds dt
	+ \int_I\zeta(T) W(T) s^{N+1} ds  +N^{-2}  \int_{I_T}   \zeta_sW_s   s^{N+1} dsdt= \\
\\  \displaystyle
	 \int_I \zeta(0) W_0 s^{N+1} ds +  \chi_{_N} \int_{I_T}   s^{p+N-1  }   \zeta  ( \frac{s}{N}  W_s +W+M)|W|^{p-2 }W  ds dt 
	 \end{array}
$$
  for all  $\zeta \in C^{1}([0,T]: C^2_c(I))$. 
    \end{definition}
   \begin{lemma} \label{lemma2.2} 
Let $W$ the solution to (\ref{eqU22}), then,  there exists $T_1>0$ such that 
\begin{equation} \label{e1}  \left. \int_I W^2 s ds \right|_{T} +  \int_{I_T}   \left| W_s \right|^2  s ds dt  \leq C(T)<\infty, \quad \mbox{ for any $T<T_{1}$} . \end{equation}
Moreover, we have that there exists $T_2>0$ such that 
\begin{equation}
\label{e2}  \int_{I_T}  W_t^2 s^{N+1} ds dt+   \int_I  \left| W_s \right|^2  s^{N+1} ds\leq C(T) <\infty,  
\end{equation}
and for any $\epsilon \in (0, \frac{1}{pN})$ we have  
\begin{equation} \label{e3} \int_0^T  \|s^{\epsilon} W\|^2_{L^{\infty}(I)} dt \leq C(T)   ,  \end{equation}
and 
\begin{equation} \label{e4}   \int_{I_T}  \left| W_{ss}+ \frac{N+1}{s} W_s\right|^2  s^{N+1}ds dt   \leq C(T)   ,  \end{equation}
for any $T<T_2$. 
  \end{lemma}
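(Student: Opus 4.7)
The plan is to obtain each of (\ref{e1})--(\ref{e4}) by a weighted energy identity, testing the equation (\ref{eqU22}) against a suitable multiplier and controlling the singular chemotactic term near $s=0$ via the Hardy inequality of Lemma \ref{lemma4.2}. Throughout we work on a time interval short enough that the continuity $W\in C^0([0,T);L^2_{s^{N+1}}(I))$, combined with a weighted Sobolev embedding, supplies a provisional bound on $\|W(t)\|_{L^\infty(I)}$, so that the factor $|W|^{p-2}W$ is tame and the nonlinear terms can be absorbed into the diffusive dissipation.

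For (\ref{e1}), multiply (\ref{eqU22}) by $Ws$ and integrate over $I$. Integration by parts against the weighted Laplacian yields
\begin{equation*}
-\int_I\Bigl(W_{ss}+\frac{N+1}{s}W_s\Bigr)Ws\,ds \;=\; \int_I|W_s|^2 s\,ds \;+\; \frac{N}{2}W(t,0)^2,
\end{equation*}
using $W(t,1)=0$; this yields the $L^2$--$H^1$ dissipation with weight $s$. The right hand side $\chi_{_N}\int s^{p-1}\bigl(\tfrac{s}{N}W_s+W+M\bigr)|W|^{p-2}W^2\,ds$ splits into three pieces: the $W_s$-contribution is absorbed into a fraction of the dissipation by Young's inequality, while the $W$- and $M$-contributions are dominated using Lemma \ref{lemma4.2} together with the provisional $L^\infty$-bound on $W$; Gronwall then yields (\ref{e1}) on some $T_1>0$. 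For (\ref{e2}), test against $W_t\,s^{N+1}$: integration by parts on the diffusive term supplies $\tfrac{1}{2N^2}\tfrac{d}{dt}\int|W_s|^2 s^{N+1}\,ds$, while the chemotactic integral on the right is controlled by $\tfrac12\int W_t^2 s^{N+1}\,ds$ plus terms that are absorbed using (\ref{e1}) and Hardy, giving (\ref{e2}) on some $T_2>0$.

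For (\ref{e3}), starting from the representation $|s^\epsilon W(s)|^2 = -\int_s^1(s'^{2\epsilon}W^2)_{s'}\,ds'$ and applying Cauchy--Schwarz in the $s^{N+1}$-weighted inner product together with Corollary \ref{cor4.3} (applied with $\delta=N+1-2\epsilon$, admissible for $\epsilon<1/(pN)$), one reduces the pointwise bound to $\|W_s\|_{L^2_{s^{N+1}}}$ times a constant, and integration in $t$ invokes (\ref{e2}). For (\ref{e4}), rearrange (\ref{eqU22}) as
\begin{equation*}
W_{ss}+\frac{N+1}{s}W_s \;=\; N^2 W_t \;-\; N^2\chi_{_N}\,s^{p-2}\Bigl(\frac{s}{N}W_s+W+M\Bigr)|W|^{p-2}W,
\end{equation*}
square, multiply by $s^{N+1}$, and integrate over $I_T$. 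The $W_t$-term is controlled by (\ref{e2}); the nonlinear piece carries weight $s^{N+2p-3}$, integrable near $s=0$ since (\ref{p}) implies $N+2p-3>-1$, and its $|W|^{2(p-1)}$-factor is controlled via the $L^\infty$-bound coming from (\ref{e3}).

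The main obstacle is the combined singularity $s^{p-2}|W|^{p-2}$ at the origin: since $p<2$ the spatial weight is strictly singular, and closing each Gronwall loop requires choosing the Hardy exponent $\delta$ so that it dominates $s^{p-2}$ while simultaneously lying in the admissible range $(0,1)\cup(1,\infty)$ of Lemma \ref{lemma4.2} and matching the weight of the dissipation available in the energy identity. The condition $p>N/(N-1)$ in (\ref{p}) is precisely what makes these constraints compatible and ensures that all weighted integrals of $|W|^p$-type terms produced along the way remain controllable by the $H^1$-norms obtained from the energy identities.
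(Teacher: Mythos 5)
Your overall architecture (multipliers $sW$ and $s^{N+1}W_t$ for (\ref{e1}) and (\ref{e2}), the elliptic rearrangement for (\ref{e4}), Hardy's inequality to tame the singular weights) matches the paper's, but the foundation you build on is not available. The claim that $W\in C^0([0,T);L^2_{s^{N+1}}(I))$ ``combined with a weighted Sobolev embedding'' supplies a provisional bound on $\|W(t)\|_{L^\infty(I)}$ is false: an $L^2$ bound controls no $L^\infty$ norm, and even the weighted space $H^1_{s^{N+1}}(I)\equiv H^1_{rad}(B_{N+2})$ does not embed into $L^\infty$ since $N+2\geq 5$. Consequently the step ``the nonlinear terms can be absorbed into the diffusive dissipation'' has no justification: after multiplying by $sW$ the chemotactic term produces $\int_I s^{p-1}|W|^{p+1}\,ds$ (and, if you treat the $W_s$-piece by Young rather than by the paper's integration by parts, also $\int_I s^{2p-1}|W|^{2p}\,ds$), which are superlinear in $W$ and cannot be closed by a linear Gronwall argument. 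The paper's mechanism is different and essential: it first proves the weighted sup bound $\|s^{\epsilon}W\|_{L^\infty(I)}\leq c\bigl[\int_I s|W_s|^2\,ds\bigr]^{1/2}$ (inequality (\ref{infinito}), obtained from Hardy with the mild weight $s$, not $s^{N+1}$), estimates $\int_I s^{p-1}|W|^{p+1}\,ds\leq \|s^\epsilon W\|_{L^\infty(I)}^{p}\,c\bigl[\int_I sW^2\,ds\bigr]^{1/2}$, and then exploits that the resulting power $\bigl[\int_I s|W_s|^2\,ds\bigr]^{p/2}$ of the dissipation is sublinear ($p<2$), so Young's inequality absorbs it and leaves the superlinear differential inequality $y'\leq cy^{1/(2-p)}+c$. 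This is precisely why $T_1$ is finite and the lemma is only local in time, a feature your linear-absorption picture cannot reproduce.

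A second, related flaw concerns (\ref{e3}): you claim that Cauchy--Schwarz in the $s^{N+1}$-weighted inner product plus Corollary \ref{cor4.3} with $\delta=N+1-2\epsilon$ reduces $\|s^\epsilon W\|_{L^\infty(I)}$ to a multiple of $\|W_s\|_{L^2_{s^{N+1}}}$. But Hardy with that $\delta$ bounds $\int_I s^{2\epsilon-N-1}W^2\,ds$ by $\int_I s^{1-N+2\epsilon}|W_s|^2\,ds$, whose weight is a negative power of $s$ for $N\geq 3$ and hence is not controlled by $\int_I s^{N+1}|W_s|^2\,ds$; the radial estimate $|W(s)|\leq C s^{-(N+1)/2}\|W\|_{H^1_{s^{N+1}}}$ shows no such embedding can hold for $\epsilon<\frac{1}{pN}$. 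The correct route is the paper's: derive the pointwise bound with the weight $s$ coming from the dissipation in (\ref{e1}), and obtain (\ref{e3}) by integrating (\ref{infinito}) in time.
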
 
    \begin{proof} 
  We multiply (\ref{eqU22}) by $sW$ and integrate by parts to obtain 
  $$ \frac{1}{2} \frac{d  }{dt} \int_I W^2 s ds+ N^{-2}  \int_I \left| W_s \right|^2  sds $$ 
  $$= \frac{\chi_{_N}}{N} \int_I s^{p}|W|^{p} W_{s} ds
  + \chi_{_N}  \int_I s^{p-1} |W|^{p}Wds+ \chi_{_N}M \int_I s^{p-1} |W|^{p}ds .$$
  Since 
  $$ \int_I s^{p}|W|^{p} W_{s} ds = \frac{1}{p+1} \int_I s^{p} ( |W|^{p} W) _{s} ds
 = - \frac{p}{p+1} \int_I s^{p-1} |W|^{p} W ds
  $$
  and 
  $$\int_I s^{p-1} |W|^{p}ds \leq \int_I s^{p-1} |W|^{p+1}ds +1$$ 
  we have 
\begin{equation} \label{eqW} \frac{1}{2} \frac{d  }{dt} \int_I W^2 s ds+ N^{-2}  \int_I \left|W_s  \right|^2  sds  \leq  c  \int_I s^{p-1} |W|^{p+1} ds+  \chi_{_N}M.  \end{equation}
Let $\epsilon$ be a positive number such that 
$$\epsilon \leq \frac{1}{pN} <\frac{p-1}{p},$$ 
then  $$s^{\epsilon}  W = \int_0^s [\tau^{\epsilon}  W]_{\tau}d \tau, $$ 
therefore
     $$  \begin{array}{lll} \displaystyle |s^{\epsilon}  W | & \leq & \displaystyle   \int_I s^{\epsilon}  |W_{s}|d s+  \epsilon \int_I s^{\epsilon-1} |W|d s \\ [4mm]  &  \leq & \displaystyle \left[\int_I s^{2 \epsilon-1} ds \int_I s  |W_{s}|^2d s\right]^{\frac{1}{2}}
     +   \left[  \int_I s^{ \epsilon-1} ds \int_I s^{\epsilon-1} |W|^2d s \right]^{\frac{1}{2}}
\end{array} $$ 
which implies, in view of lemma \ref{lemma4.2} 
\begin{equation} \label{infinito}   |s^{\epsilon}  W |\leq c\left[\int_I s  |W_{s}|^2d s\right]^{\frac{1}{2}}
\end{equation} 
and the term 
\begin{equation} \begin{array}{lll} \displaystyle
  \int_I s^{p-1} |W|^{p+1} ds  & \leq &  \displaystyle  \| s^{\epsilon}W\|_{L^{\infty}(I)}^p
   \int_I s^{p-1- p\epsilon} |W| ds   \\ [4mm] & \leq &  \displaystyle   \| s^{\epsilon}W\|_{L^{\infty}(I)}^p
  \left[ \int_I s^{2(p-1- p\epsilon)-1} ds\int_I s |W|^2 ds\right]^{\frac{1}{2}}
  \end{array} 
  \label{eq3.9}
\end{equation}
as a consequence of the election of $\epsilon$,  the inequality  $2(p-1- p\epsilon)>0$ is satisfied and   we have $$ \int_I s^{2(p-1)- p\epsilon-1} ds\leq c <\infty$$
and   (\ref{eqW})  becomes, thanks to (\ref{infinito}) and (\ref{eq3.9})   
$$ \frac{1}{2} \frac{d  }{dt} \int_I W^2 s ds+ N^{-2}  \int_I \left| W_s \right|^2  sds 
\leq   c  \left[  \int_I \left| W_s \right|^2  sds \right]^{\frac{p}{2}} \left[  \int_I s |W|^{2} d s \right]^{\frac{1}{2}} 
 + \chi_{_N}M.
$$
 Thanks to Young$^{\prime}$s Inequality 
 $$
 \displaystyle \frac{1}{2} \frac{d  }{dt} \int_I W^2 s ds+ N^{-2}  \int_I \left| W_s \right|^2  sds 
  \leq  
  \frac{N^{-2}}{2}   \int_I \left| W_s \right|^2  sds   + c \left[  \int_I  |W|^{2} s d s \right]^{\frac{1}{2-p}} 
 + \chi_{_N}M. 
$$
which implies, in view of $\frac{1}{2-p} > \frac{p+1}{2}$ that 
$$
\frac{d  }{dt} \frac{1}{2}  \int_I W^2 s ds+ \frac{N^{-2}}{2}   \int_I \left| W_s \right|^2  sds 
   \leq     c \left[  \int_I  |W|^{2} s d s \right]^{  \frac{1}{2-p} }
  +c.
$$
 After integration, and thanks to Gronwall$^{\prime}$s Lemma,  we get that there exists $T_1>0$ such that 
 $$  \frac{1}{2} \int_I W^2 sds + \frac{N^{-2}}{2}  \int_{I_T} \left| W_s \right|^2  sds   \leq c(T), \quad \mbox{ for any $T<T_{1}$} $$
 and we prove (\ref{e1}). 
 \newline 
 To obtain (\ref{e2}) we multiply (\ref{eqU22}) by $s^{N+1} W_t$ and integrate by parts to obtain
   $$\begin{array}{lll} 
 \displaystyle I_W & := & \displaystyle  \int_I W_t^2 s^{N+1}ds+ \frac{N^{-2}}{2}  \frac{d  }{dt}  \int_I \left| W_s \right|^2  s^{N+1} ds  \\ [4mm] \displaystyle 
    & =  &  \displaystyle \frac{ \chi_{_N}}{ N }\int_I s^{p+N }|W|^{p-2} W W_s W_{t} ds
   + \chi_{_N}\int_I s^{p-1+N} |W|^{p} W_t ds\\ [4mm] \displaystyle 
    &   &  \displaystyle + \chi_{_N}M\int_I s^{p-1+N} |W|^{p-2}W W_t ds,
  \end{array} $$
  thanks to Young inequality,  the previous integrals are  bounded as follows
 $$\begin{array}{lll} 
 \displaystyle  \frac{ \chi_{_N}}{ N }\int_I  s^{p+N }|W|^{p-2} W W_s W_{t} ds & \leq &  \displaystyle \frac{1}{4}  \int_I \left| W_t \right|^2  s^{N+1}ds \\[4mm] & & \displaystyle +\frac{ \chi_{_N}^2}{N^2} \| s^{\frac{2p+N-1}{2p-2}} W\|_{L^{\infty}(I)}^{2p-2}   \int_I \left| W_s \right|^2  s^{N+1}ds 
  \end{array} $$  
  $$\begin{array}{lll} 
 \displaystyle  \chi_{_N}\int_I s^{p+N-1} |W|^{p} W_t ds
  & \leq   & \displaystyle   \frac{1}{4}  \int_I \left| W_t \right|^2  s^{N+1}ds+
    \chi_{_N}^2\int_I s^{2p+N-3} |W|^{2p}  ds\\  [4mm] & \leq &  \displaystyle 
     \frac{1}{4}  \int_I \left| W_t \right|^2  s^{N+1}ds+
    \chi_{_N}^2\| s^{\frac{2p+N-4}{2p-2}} W \|^{2p-2}_{L^{\infty}(I)} \int_I s W^2  ds
    \end{array} $$  
  $$\begin{array}{lll} 
 \displaystyle   \chi_{_N}M\int_I s^{p+N-1} |W|^{p-1} |W_t| ds
  & \leq   & \displaystyle \frac{1}{4}  \int_I \left| W_t \right|^2  s^{N+1}ds+  \chi_{_N}^2 M^2 \int_I s^{2p+N-3} |W|^{2p-2}  ds\\  [4mm] & \leq &  \displaystyle 
     \frac{1}{4}  \int_I \left| W_t \right|^2  s^{N+1}ds+
    \chi_{_N}^2 M^2 \left(  \int_I W^2s  ds + 1 \right)
  , \end{array} $$
  where the last inequality is a consequence of $s^{2p+N-4} \leq 1$
Notice that $ {N} \geq 3$ and therefore 
  $$2p+N-4>0.$$ 
  Now, we apply   (\ref{infinito})  and (\ref{e1}) and it results   
 $$I_W \leq \frac{3}{4}  \int_I \left| W_t \right|^2  s^{N+1}ds+ 
 d(t) \left[  \int_I \left| W_s  \right|^2  s^{N+1}ds+1 \right] + c
 $$ 
 where $$d(t):= c \left[ \int_I \left|\frac{\partial  W }{\partial s} \right|^2  s ds+1\right]^{p-1 }  \in L^{\frac{1}{p-1}}(0,T_1) .$$
 We apply Gronwall$^{\prime}$s lemma to the previous inequality to prove
  that  for any  $T < T_1 $ there exists $c(T)$ such that 
 $$ 
 \int_{I_T}   W_t^2 s^{N+1}dsdt+   \int_I \left| W_s\right|^2  s^{N+1}ds\leq c(T) <\infty,   $$  
 which  proves   (\ref{e2}). 
 \newline 
(\ref{e3}) is a consequence of (\ref{infinito}) and (\ref{e2}). 
\newline 
Finally, to obtain (\ref{e4}) we first multiply by $s^{N+1}\left[ {W}_{ ss  } + \frac{N+1}{s} W_s \right]$ the following equation     $$
  -  N^{-2} \left[ {W}_{ ss  } + \frac{N+1}{s} W_s \right] =  -  {W}_{{t}}+
  \chi _{_N}    s^{p-2   }   \left( \frac{s}{N}   W_{s} +W+M  \right)   \left|  W \right|^{p-2}
 W,
$$and integrate over $I$.
 Now, we proceed as before,   and apply Young$^{\prime}$s inequality to the right-hand-side terms, after integration over $(0,T)$ and thanks to (\ref{e2}) and (\ref{e3}) the proof of  the lemma ends as a consequence of the following inequality 
 $$  \begin{array}{lrl} \displaystyle \frac{1}{2} \int_{I_T} \left|W_{ss} \right|^2  s^{N+1}ds  dt  \leq &  \displaystyle   \int_{I_T} \left| W_{ss}
+\frac{N+1}{s} W_s\right|^2  s^{N+1}ds dt   & \\ [4mm] & \displaystyle + 2(N+1)^2\int_{I_T}  \left| W_s\right|^2  s^{N-1}ds  dt & \leq C(T)  . \end{array}$$
   \qed \end{proof}
      \begin{lemma}
Let $N \geq 3$, and    $W_0 \in H^2_{s^{N+1}} (I) \cap H^1_{0, s^{N+1}}(I)$,
 then, there exists 
  $T_{{bu}}>0$ and  at least a  weak solution $W$   to  (\ref{eqU22}) in $(0, T_{bu})$.  
  \end{lemma}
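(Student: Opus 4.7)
The strategy is to construct the weak solution by an approximation--compactness argument that combines a regularization of the nonlinearity with the a priori bounds already assembled in Lemma \ref{lemma2.2}. I would first introduce a family of regularized problems by replacing $|W|^{p-2}W$ with a Lipschitz approximation $g_k(W):=(k^{-2}+W^2)^{(p-2)/2}W$, which converges uniformly on compact sets to $|W|^{p-2}W$ as $k\to\infty$, and by truncating the singular weight $s^{p-2}$ at the level $\max(s,k^{-1})^{p-2}$. The resulting equation has a Lipschitz nonlinearity and bounded coefficients, while the linear operator $\partial_t-N^{-2}(\partial_{ss}+(N+1)s^{-1}\partial_s)$, which is (up to a constant) the radial Laplacian in dimension $N+2$, provides maximal $L^2$-regularity in the weighted space $L^2_{s^{N+1}}(I)$. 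A Banach fixed-point argument on a small time interval then yields, for each $k$, a unique regularized solution $W^k$ with the regularity required by Definition \ref{def}.

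\textbf{Uniform estimates.} The four estimates (\ref{e1})--(\ref{e4}) of Lemma \ref{lemma2.2} are derived by testing the equation with $sW$, with $s^{N+1}W_t$, and with $s^{N+1}(W_{ss}+(N+1)s^{-1}W_s)$, and by invoking the Hardy inequality of Lemma \ref{lemma4.2} together with Young's inequality to absorb the $(p+1)$-order terms into the diffusion. Each step carries over verbatim to the regularized problem because $g_k$ is dominated by the original nonlinearity and because the truncated weight is pointwise bounded by $s^{p-2}$. The times $T_1,T_2$ from Lemma \ref{lemma2.2} can therefore be chosen independently of $k$, and setting $T_{bu}:=\min(T_1,T_2)>0$ gives a uniform bound on $W^k$ in $L^\infty((0,T_{bu});H^1_{0,s^{N+1}}(I))$, on $W^k_t$ in $L^2((0,T_{bu});L^2_{s^{N+1}}(I))$, and on $W^k$ in $L^2((0,T_{bu});H^2_{s^{N+1}}(I))$.

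\textbf{Passage to the limit.} Aubin--Lions applied to the weighted triple $H^2_{s^{N+1}}(I)\hookrightarrow\hookrightarrow H^1_{s^{N+1}}(I)\hookrightarrow L^2_{s^{N+1}}(I)$ then provides a subsequence converging strongly in $L^2((0,T_{bu});L^2_{s^{N+1}}(I))$ and almost everywhere in $I_{T_{bu}}$ to a limit $W$ enjoying the regularity (\ref{existencia}). The pointwise convergence, together with the $L^2((0,T);L^\infty)$ control of $s^\epsilon W^k$ supplied by (\ref{e3}) and (\ref{infinito}), and the weak convergence of $W^k_s$ in $L^2(I_{T_{bu}};s^{N+1}\,ds\,dt)$, let me pass to the limit in the weak formulation of Definition \ref{def}, identifying all three nonlinear contributions $s^{p+N-1}g_k(W^k)W^k_s$, $s^{p+N-1}|W^k|^{p-2}(W^k)^2$ and $s^{p+N-1}|W^k|^{p-2}W^kM$ with their counterparts for $W$ via the Vitali convergence theorem. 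Continuity in time $W\in C^0([0,T_{bu});L^2_{s^{N+1}}(I))$ follows from $W\in H^1((0,T_{bu});L^2_{s^{N+1}}(I))$.

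\textbf{Main obstacle.} The principal technical difficulty is controlling the combined singularity at $s=0$: the coefficient $s^{p-2}$ blows up there, the parabolic operator degenerates with respect to the measure $s^{N+1}\,ds$, and the map $z\mapsto|z|^{p-2}z$ is merely $(p-1)$-H\"older. What makes the scheme close is precisely the Hardy-type inequality of Lemma \ref{lemma4.2} combined with the pointwise estimate (\ref{infinito}), which together absorb both singularities into the weighted $H^1$ energy. Verifying that the regularization parameter $k$ neither deteriorates these inequalities nor destroys the integrability of the cross term $g_k(W^k)W^k_s$ against the singular weight, and checking that the Dirichlet traces $W(t,0)=W(t,1)=0$ survive the limit in the appropriate weighted-trace sense, is where the bulk of the remaining technical work lies.
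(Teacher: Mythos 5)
Your proposal is correct in outline but follows a genuinely different route from the paper. The paper does not regularize the nonlinearity or the weight at all: it sets up a frozen-coefficient iteration (\ref{iterat}), in which only the H\"older-singular factor $|W|^{p-2}W$ is evaluated at the previous iterate $W^{n-1}$ while the factor $\frac{s}{N}W_s^n+W^n+M$ is kept at the current step, so that each step is a \emph{linear} problem; existence of the limit is then obtained by Schauder's fixed point theorem on the convex set $Q$ of functions with $\int_I |W_s|^2 s^{N+1}\,ds<k$, with compactness supplied by the identification $H^2_{s^{N+1}}(I)\equiv H^2_{rad}(B_{N+2})$ and Aubin--Lions, exactly as in your limit passage. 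Your scheme instead makes the full nonlinearity Lipschitz via $g_k$ and the truncated weight, solves each approximate problem by contraction, and removes the regularization afterwards. Both arguments stand or fall on the same two ingredients --- the uniform energy estimates of Lemma \ref{lemma2.2} (hence the Hardy inequality and (\ref{infinito})) and the weighted Aubin--Lions compactness --- so the choice is largely one of where the technical work is placed. Your route buys genuinely well-posed (Lipschitz) approximate problems and avoids having to verify continuity of the solution map required by Schauder, at the price of re-deriving (\ref{e1})--(\ref{e4}) uniformly in $k$: note that this is not quite ``verbatim,'' since e.g.\ the identity $\int_I s^{p}|W|^pW_s\,ds=-\frac{p}{p+1}\int_I s^{p-1}|W|^pW\,ds$ must be replaced by an integration by parts against the primitive $G_k(z)=\int_0^z g_k(\tau)\tau\,d\tau$, using $|G_k(z)|\le |z|^{p+1}/(p+1)$; the paper's route avoids this but must instead justify solvability of the linear problem (\ref{iterat}) with the merely $L^{p'}$-integrable frozen coefficient $|W^{n-1}|^{p-2}W^{n-1}$, which again rests on (\ref{infinito}). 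One point you should make explicit if you pursue your version: the Banach fixed point for fixed $k$ must be run in a space controlling $W_s$ (e.g.\ via maximal $L^2$-regularity, as you indicate), because the regularized right-hand side still contains the gradient term $\frac{s}{N}W_s\,g_k(W)$, and the resulting local time $T_k$ a priori depends on $k$ before being extended to $\min\{T_1,T_2\}$ by the uniform bounds.
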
 
    \begin{proof} We first consider $T>0$,   such that $ T< \min\{T_1,T_2\} $   and $k> 0$ large satisfying 
    $$ k >1+  C(T)  \quad t\leq T,  $$  for $C(T)$ defined in Lemma \ref{lemma2.2}.
     We define the subset $$Q:=\{  W \in L^2((0,T): L^2_{s^{N+1}}(I)),  \quad  \int_I \left| W_s \right|^2s^{N+1}  ds < k   \}.$$ 
         For a given $W^{n-1} \in Q$, we consider the   problem \begin{equation} \label{iterat}
 \left\{ \begin{array}{l} {W}_{{t}}^n  -  N^{-2} \left[ {W}_{ss  }^n + \frac{N+1}{s} W_s^n \right]  \\ [4mm]   \displaystyle =   
  \chi _{_N}    s^{p-2   }   \left( \frac{s}{N}  W_{s}^n +W^n+M  \right)   \left|  W^{n-1} \right|^{p-2}
  W^{n-1} 
\\[4mm]
W^n(t,0)=W^n(t,1)=0 \\[4mm]
W^n(0,s)=W_0(s).
\end{array} \right.
\end{equation} 
In  analogous fashion to Definition \ref{def},  we define the notion of weak solution to  (\ref{iterat}) 
i.e. $W^n$ is a weak solution to (\ref{iterat}), if for any $W^{n-1} \in Q$, $W^n$  satisfies  
\begin{equation} \label{solW}  \begin{array}{l} \displaystyle
- \int_{I_T}  \zeta_t W^n s^{N+1} ds dt
	+  \int_I \zeta(T) W^n(T) s^{N+1} ds +N^{-2}  \int_{I_T}    \zeta_sW_s^n  s^{N+1} ds dt= \\
\\  \displaystyle \int_I \zeta(0) W^n_0 s^{N+1} ds+
	 \chi_{_N} \int_{I_T}   s^{p+N-1  }   \zeta  ( \frac{s}{N}  W^n_s +W^n+M)| W^{n-1} |^{p-2 } W^{n-1}  ds  dt
	 \end{array}
\end{equation} 
  for all   $\zeta \in C^{1}([0,T]: C^2_c(I))$. 
We construct a functional 
 $J(W^{n-1})=W^n$ where $W^n$ is the solution to  (\ref{iterat}).
  We obtain,  in the same way  as in   lemma \ref{lemma2.2},   the following estimates 
 \begin{equation}  \label{f2}  \int_{I_T}  | W_t^n|^2 s^{N+1}dsdt+   \int_I \left|  W^n_s  \right|^2  s^{N+1}ds dt \leq c(T) <\infty,  
\end{equation}
\begin{equation} \label{f4}   \int_{I_T}   \left| W^n_{ss}+ \frac{N-1}{s}   W^n_s \right|^2  s^{N+1}ds dt   \leq C(T) ,  \end{equation}
\begin{equation} \label{f5}    \|  s^{N+1}  W^n  \|_{L^{\infty}(I)}    \leq C(T) .  \end{equation}
(\ref{f5}) implies that
$$(s^{N+1} W^n)_k= s^{N+1}W^n,   \quad (s^{\frac{N+1}{p-2}} U)_k= s^{\frac{N+1}{2-p}}W^n. $$
%
Let $H^2_{rad} (B_{N+2})$ be  the Sobolev functional space of radially symmetric functions   in $L^2(B_{N+2})$ 
  defined over  the   $N+2$-dimensional unit  ball $B_{N+2}$, with derivatives  in $L^2(B_{N+2})$   up to order two.   Since $$H^2_{rad} (B_{N+2})  \equiv H^2_{s^{N+1}}(I) $$
  (see \cite{defiguereido}   Theorem 2.3) and $H^2(B_{N+2})  \hookrightarrow   H^1 (B_{N+2})$ is a compact embedding we have that   $ H^2_{s^{N+1}}(I) \hookrightarrow   H^1_{s^{N+1}}(I)$ and $ H^1_{s^{N+1}}(I) \hookrightarrow   L^2_{s^{N+1}}(I)$ are also compact.
Now,    Aubin-Lions Theorem   and Schauder fixed point Theorem,  provides the existence  of a fixed point  $W^*$,   
which is a  weak solution of (\ref{solW}) for $T$ small enough. 
 It is possible to extend the solution  as far as $W$ satisfies (\ref{existencia}), i.e. there exists a $T_{bu}$ such that there exists a weak solution to  (\ref{eqU22}) in $(0, T_{bu}) \times I$.
     \qed \end{proof}
     Now, we introduce the notion of weak solution to (\ref{eqU})-(\ref{iniU}). 
        \begin{definition} 
  \label{def2} 
  Let  $I=(0,1)$ and  $U_0 \in H^2_{s^{2- \frac{2}{N}}} (I) \cap H^1_{0 }(I)  \cap L^2_{0, s^{-2}}(I)$.  
  Then,   a {\em weak solution} of (\ref{eqU})-(\ref{iniU}) in $(0,T)  \times I $ is  a function
$$
	U\in  L^2((0,T):H^1_{0} (I))  \cap H^1((0,T):L^2_{\rho^{\frac{2}{N}-2} }(I)) \cap L^2((0,T):L^{2p}_{\rho^{p\frac{2}{N}-2} }(I)) 
$$
  such that  $U:[0,T]\times I \rightarrow \R$
  satisfies
$$ \begin{array}{l} \displaystyle
	- \int_{I_T} \zeta_t U \rho^{\frac{2}{N}-2 } d\rho dt + \int_{I} \zeta (T) U(T) \rho^{\frac{2}{N}-2 } d\rho 
	 +  \int_{I_T}   \zeta_{\rho}   U_{ \rho}  d \rho dt= \\ [4mm] \displaystyle
	  \int_{I} \zeta (0) U_0 \rho^{\frac{2}{N}-2 } d\rho 
	 \chi_{_N} \int_{I_T}  \rho^{ \frac{-p(N-1)}{N} }   \zeta [  U_{\rho}  +M]|U|^{p-2 }U  d\rho dt 
	 \end{array}
$$  for all     $\zeta \in C^{1}([0,T]: C^2_c(I))$. 
    \end{definition} 

        \begin{lemma}
Let $N \geq 3$, and    $U_0  \in H^2_{s^{2- \frac{2}{N}}} (I) \cap H^1_{0 }(I)  \cap L^2_{0, s^{-2}}(I)$,
 and $U$ a weak solution to    (\ref{eqU})-(\ref{iniU}),  then, 
  for any  $T\in(0,T_{bu}) $ we have 
  $$\begin{array}{l} \displaystyle \int_I U^2  \rho^{\frac{2}{N}-3}d \rho   \leq C(T), 
   \\ [4mm] 
  \displaystyle 
\int_{I_T}       |U_t|^2 \rho^{\frac{2}{N}-2}  d\rho dt +
      \int_I \left|   U_{\rho}\right|^2  d\rho + \int_I    U^2  
   \rho^{-2}d\rho   
 \leq C(T) ,
   \\ [4mm] 
  \displaystyle 
    \int_{I_T}  \rho^{2- \frac{2}{N}}  \left|   U_{\rho \rho }\right|^2  d\rho dt   \leq C(T)
  \\ [4mm] 
  \displaystyle 
 | U(t, \rho) | \leq c \rho^{\frac{1}{2}} C(T),
\end{array}
$$ moreover 
$U \in C((0,T):C_0^{0,\frac{1}{2}}(I)),$
  \end{lemma}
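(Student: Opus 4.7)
The plan is to deduce all five claims from the $W$-estimates established in Lemma~\ref{lemma2.2}, via the inverse change of variables $\rho = s^N$, $U(t,\rho) = s^N\,W(t,s)$, introduced at the start of Section~\ref{s2.1}. I would first record the Jacobian $d\rho = N s^{N-1}\,ds$ together with the two differential identities already computed in the derivation of (\ref{eqU22}):
\[
U_\rho \;=\; W + \frac{s}{N}\,W_s, \qquad \rho^{\,1-2/N}\,U_{\rho\rho} \;=\; \frac{1}{N^2}\Bigl[W_{ss} + \frac{N+1}{s}\,W_s\Bigr].
\]

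Second, I would substitute these identities directly into each of the weighted integrals appearing in the claim. The first estimate becomes $\int_I U^2 \rho^{2/N-3}\,d\rho = N\int_I W^2 s\,ds$, which is exactly (\ref{e1}). The temporal integral becomes $\int_{I_T} U_t^2 \rho^{2/N-2}\,d\rho\,dt = N\int_{I_T} W_t^2 s^{N+1}\,ds\,dt$, controlled by (\ref{e2}). The second-derivative integral becomes
\[
\int_{I_T} \rho^{2-2/N}\,U_{\rho\rho}^2 \,d\rho\,dt \;=\; \frac{1}{N^3}\int_{I_T}\Bigl[W_{ss} + \frac{N+1}{s}\,W_s\Bigr]^2 s^{N+1}\,ds\,dt,
\]
controlled by (\ref{e4}). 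The remaining two quantities $\int_I U^2\rho^{-2}d\rho$ and $\int_I U_\rho^2 d\rho$ generate integrals of the type $\int W^2 s^{N-1}\,ds$ and the cross term $\int W W_s s^N\,ds$, in addition to $\int W_s^2 s^{N+1}\,ds$. The first is absorbed into the $W_s$ integral via Corollary~\ref{cor4.3} with $\delta = 1-N$ (so that $-\delta = N-1$, $2-\delta = N+1$, and the boundary hypothesis $W(t,1)=0$ is satisfied); the cross term is handled by Cauchy-Schwarz with the same two controlling quantities, and everything is then bounded by (\ref{e2}).

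For the pointwise bound, I would invoke $U(t,0)=0$ and Cauchy-Schwarz:
\[
|U(t,\rho)| \;=\; \Bigl|\int_0^\rho U_\sigma(t,\sigma)\,d\sigma\Bigr| \;\le\; \rho^{1/2}\,\|U_\rho(t,\cdot)\|_{L^2(I)} \;\le\; c\,\rho^{1/2}\,C(T).
\]
Applied between any two points $\rho_1 < \rho_2$ in $I$, the same inequality yields a uniform-in-$t$ bound on the $\tfrac12$-Hölder seminorm of $U(t,\cdot)$ on $\overline{I}$, with vanishing trace at $\rho=0$ and $\rho=1$. Continuity of the map $t\mapsto U(t,\cdot)$ into $C^{0,1/2}_0(I)$ then follows by combining this uniform spatial equicontinuity and uniform boundedness with continuity of $t\mapsto U(t,\cdot)$ in the weaker topology $L^2_{\rho^{2/N-2}}(I)$, which is a consequence of the $L^2$-in-time bound on $U_t$; a standard Arzelà-Ascoli argument delivers the claim.

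The main obstacle is simply bookkeeping: choosing the right exponent in the Hardy inequality of Lemma~\ref{lemma4.2}, and verifying that the weighted Sobolev spaces entering Definition~\ref{def2} correspond, under $\rho = s^N$, to the spaces in Definition~\ref{def} in which the existence of $W$ was established. All the heavy analytic work has already been done in Lemma~\ref{lemma2.2}; the present lemma is essentially a change-of-variables dictionary.
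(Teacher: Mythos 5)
Your proposal is correct and follows essentially the same route as the paper: both translate the $W$-estimates of Lemma \ref{lemma2.2} back to $U$ through the substitution $\rho=s^N$, $U=s^NW$, obtain the pointwise bound by Cauchy--Schwarz from $U(t,0)=0$, and deduce the $C((0,T):C_0^{0,1/2}(I))$ regularity by a compactness (Aubin--Lions/Arzel\`a--Ascoli) argument combining the uniform $H^1_0$ bound with the $L^2$-in-time bound on $U_t$. The only point worth flagging is that your invocation of Corollary \ref{cor4.3} with $\delta=1-N$ falls outside the stated hypothesis $\delta\in(0,1)\cup(1,\infty)$, though the proof of Lemma \ref{lemma4.2} for the case $\delta<1$ extends verbatim to negative $\delta$ once one checks $\lim_{s\to0}s^{N}W^2=0$, which holds since $|W|\le c$.
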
 
    \begin{proof} 
Since $ W^2= (s^NW)^2 s^{-2N} $ and $sds= \frac{s^{2-N}}{N} [ N s^{N-1} ds]  $ we have   $$  \int_I W^2 sd s 
= \frac{1}{N}  \int_I (W s^{N}) ^2 s^{2-3N}   Ns^{N-1} d s =  \frac{1}{N} \int_I U^2  \rho^{\frac{2}{N}-3}d \rho   
$$ which implies 
  \begin{equation}  \label{g1}   \int_I U^2  \rho^{\frac{2}{N}-3}d \rho\leq c(T) <\infty. 
\end{equation}
We also have that,  $s ds= N^{-1}\rho^{\frac{2}{N}-1}d\rho$,   and therefore   
$$ \begin{array}{lll} \displaystyle    \int_I \left| W_s \right|^2  sds 
&  =  & \displaystyle \frac{1}{N}   \int_I \left|  \frac{N}{\rho^{\frac{1}{N}}} \left( U_{\rho} -\frac{U}{\rho} \right)
  \right|^2 \rho^{\frac{2}{N}-1}  d \rho \\[4mm] 
&  =  & \displaystyle  N    \int_I \left| U_{\rho}  -\frac{U}{\rho}
  \right|^2 \rho^{-1} d \rho 
\end{array}
  $$
 and 
it results 
  \begin{equation} \label{g2} \int_{I_T}  \left|   U_{\rho} - \frac{U}{\rho} \right|^2 \rho^{-1} d\rho  dt \leq C(T)   .  \end{equation}
  In the same  fashion,   we obtain, thanks to (\ref{e2})  and (\ref{g1})   
\begin{equation} \label{g3} 
\int_{I_T}       |U_t|^2 \rho^{\frac{2}{N}-2}  d\rho dt +
      \int_I \left|   U_{\rho}\right|^2  d\rho + \int_I    U^2  
   \rho^{-2}d\rho   \leq C(T)    \end{equation}
 which implies, in view of the embedding $H^1_0(I) \hookrightarrow L^{\infty}(I)$, 
   \begin{equation} \label{g5}   | U |   \leq C(T).   \end{equation}
As before, from (\ref{e4}) we deduce  
 \begin{equation} \label{g4} 
     \int_{I_T}  \rho^{2- \frac{2}{N}}  \left|   U_{\rho \rho }\right|^2  d\rho dt   \leq C(T).    \end{equation}
 Notice that thanks  to Cauchy-Schwarz inequality  
$$ U(t, \rho) = \int_{0}^{\rho} U_{r} dr \leq \rho^{\frac{1}{2}} \left[\int_{0}^{\rho} |U_{r}|^{2}dr\right]^{\frac{1}{2}} \leq  \rho^{\frac{1}{2}} \left[\int_I |U_{r}|^{2}dr\right]^{\frac{1}{2}} $$
which implies 
  \begin{equation} \label{g7} | U(t, \rho) | \leq c \rho^{\frac{1}{2}} C(T),    
  \end{equation} 
   for any $t \leq T<T_{{bu}}.$ From (\ref{g3}) we have that 
   $$U \in L^{\infty}(0,T:H_0^1(I)) \cap H^{1}(0,T:L^2_{\frac{2}{N}-2}(I))  .  $$
   Thanks to the compact  embedding
   $$H^1_0(I) \hookrightarrow C^{0, \frac{1}{2}}_0(I)$$ 
   where $C^{0, \frac{1}{2}}_0(I)$ denotes the H\"older continuous functions in $I$ with  zero boundary values 
   and Aubin-Lions Lemma, we have that 
   \begin{equation} \label{regul}
   U \in C((0,T):C_0^{0,\frac{1}{2}}(I)), \quad \mbox{ for any } T < T_{bu} ,
   \end{equation}
   and the proof ends.
   \qed    
   \end{proof}
  %
  %
  %
  %
  %
  %
  %
  
        \begin{lemma}
Let $N \geq 3$, and    $U_0  \in H^2_{s^{2- \frac{2}{N}}} (I) \cap H^1_{0 }(I)  \cap L^2_{0, s^{-2}}(I)$,
 then, there exists 
  $T_{bu}>0$ and  at least a  weak solution $U$   to   (\ref{eqU})-(\ref{iniU}),  such that $$ \limsup_{t \rightarrow T_{bu}} \|U\|_{L^{\infty}}+ t= \infty.$$   
    \end{lemma}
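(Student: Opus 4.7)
The plan is to transfer the weak-solution statement for $W$ proved in the preceding lemma back to the original variable $U$, using the change of unknowns $\rho = s^{N}$, $U(t,\rho) = \rho\, W(t,\rho^{1/N})$, and then to upgrade the local existence to a maximal-time statement with the standard blow-up alternative.

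First I would begin with the weak solution $W$ of (\ref{eqU22}) on $(0,T_{bu})\times I$ produced by the previous lemma. Setting $U(t,\rho) := \rho\, W(t,\rho^{1/N})$, one just has to read the computations already carried out in Section \ref{s2} backwards: the identities relating $W_s$, $W_{ss}+\tfrac{N+1}{s}W_s$ to $U_\rho$ and $\rho^{1-2/N}U_{\rho\rho}$ were the very ones used to pass from (\ref{eqU}) to (\ref{eqU22}), so the PDE, the homogeneous Dirichlet conditions, and the initial datum translate directly. The initial-datum compatibility $W_0(s) = s^{-N} U_0(s^N)$ sits in $H^2_{s^{N+1}}(I)\cap H^1_{0,s^{N+1}}(I)$ exactly under the assumption $U_0\in H^2_{s^{2-2/N}}(I)\cap H^1_{0}(I)\cap L^2_{0,s^{-2}}(I)$, by the Hardy-type equivalence $\int_I W^2 s\,ds = N^{-1}\int_I U^2 \rho^{2/N-3}d\rho$ and the analogous identities already used in the preceding lemma for the $s$-derivatives.

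Second, I would verify the weak formulation in Definition \ref{def2}. Given any test function $\zeta\in C^{1}([0,T]:C_{c}^{2}(I))$ in the $\rho$ variable, pull it back to $\tilde\zeta(t,s) := \zeta(t,s^{N})$, which lies in $C^{1}([0,T]:C_{c}^{2}(I))$ in the $s$ variable and hence is admissible in (\ref{solW}). Changing variables via $\rho = s^{N}$, $d\rho = N s^{N-1}ds$, every integral in (\ref{solW}) turns termwise into the corresponding integral of Definition \ref{def2}; the bounds on $\int |U_\rho|^2 d\rho$, $\int U^2\rho^{-2}d\rho$ and $\int \rho^{2-2/N}|U_{\rho\rho}|^2 d\rho$ from the preceding lemma guarantee that $U$ belongs to the functional spaces listed in Definition \ref{def2}, and the pointwise bound $|U(t,\rho)| \leq c\rho^{1/2}C(T)$ ensures that no boundary contribution at $\rho = 0$ appears when integrating by parts.

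Third, I would set $T_{bu}$ to be the supremum of all $T>0$ for which a weak solution exists on $(0,T)\times I$ with $\|U(t,\cdot)\|_{L^{\infty}(I)}$ finite. If $T_{bu}=\infty$ the conclusion $\limsup_{t\to T_{bu}}(\|U\|_{L^\infty}+t)=\infty$ is trivial, so assume $T_{bu}<\infty$ and suppose for contradiction that $\limsup_{t\to T_{bu}}\|U(t,\cdot)\|_{L^{\infty}}<\infty$. By (\ref{regul}) the function $U$ extends continuously to $t=T_{bu}$ as an element of $C^{0,1/2}_{0}(I)$, and the bounds from (\ref{g3})--(\ref{g4}) provide an $H^2_{s^{2-2/N}}(I)\cap H^1_0(I)\cap L^2_{0,s^{-2}}(I)$ trace at $t=T_{bu}$. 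Taking this trace as a new initial datum and re-running the fixed-point construction of the previous lemma produces an extension past $T_{bu}$, contradicting maximality. Hence the blow-up alternative $\limsup_{t\to T_{bu}}\|U\|_{L^{\infty}}+t = \infty$ must hold.

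The main obstacle, as I see it, is checking that the $W$-to-$U$ change of variables is rigorous at the singular endpoint $\rho=0$, since the $W$-estimates degenerate into weighted estimates on $U$ and the test-function transformation must not produce spurious boundary contributions. The Hardy inequality of Lemma \ref{lemma4.2}, the bound $\|s^{\epsilon}W\|_{L^{\infty}}\leq C(T)$ from (\ref{e3}), and the pointwise decay $|U|\leq c\rho^{1/2}C(T)$ together should settle this, but keeping track of all the weight exponents to match exactly the spaces of Definition \ref{def2} is the delicate bookkeeping step.
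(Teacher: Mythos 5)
Your proposal follows essentially the same route as the paper: transfer the weak solution $W$ of (\ref{eqU22}) back to $U$ via $U(t,\rho)=\rho\,W(t,\rho^{1/N})$, and use the weighted a priori estimates (in particular $|U|\leq c\rho^{1/2}C(T)$ and the bound on $\int_I U^2\rho^{2/N-3}d\rho$) to check that every term in the weak formulation of Definition \ref{def2} is finite, the paper's only substantive step being the bound on $\int_I[\rho^{-(N-1)/N}U]^{2p}d\rho$ obtained exactly this way. Your additional continuation argument for the alternative $\limsup_{t\to T_{bu}}\|U\|_{L^\infty}+t=\infty$ is a standard elaboration of what the paper leaves implicit in the preceding lemma, so the two proofs agree in substance.
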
 
    \begin{proof} 
           Notice that,  since $\zeta, U \in L^2_{\rho^{\frac{2}{N}-2}} (I)$  and thanks to Cauchy-Swartz and Young inequalities the term   
    $$ \begin{array}{lll} \displaystyle      \int_I \rho^{ \frac{-p(N-1)}{N} }   \zeta |U|^{p-2 }U  d\rho 
 &    \leq  & \displaystyle \frac{1}{2}  \int_I  \rho^{\frac{2}{N}-2} \zeta^2  d\rho + \frac{1}{2} 
  \int_I \left[  \rho^{ \frac{2}{N} -2 }     |U|^2 \right]^{p-1}  d\rho
  \\ [4mm] 
  & \leq &  \displaystyle \frac{1}{2}  \int_I  \rho^{\frac{2}{N}-2} \zeta^2  d\rho + \frac{1}{2} 
 \left[  \int_I   \rho^{ \frac{2}{N} -2 }     |U|^2  d\rho \right]^{p-1} 
   \end{array}
     $$ is bounded.  Now we see the boundedness of the term 
     $$
     \int_{I_T}   \rho^{ \frac{-p(N-1)}{N} }   \zeta   U_{\rho} |U|^{p-2 }U  d\rho dt
     $$ in the weak formulation (\ref{def2}).  As before,  we first  apply Cauchy-Schwartz and Young inequalities to obtain 
       $$   \int_I \rho^{ \frac{-p(N-1)}{N} }   \zeta  U_{\rho}  |U|^{p-2 }U  d\rho 
    \leq \frac{1}{2}\int_I    |U_{\rho}|^2 d\rho+ \frac{1}{2}   \int_I \rho^{-2p \frac{(N-1)}{N} }   \zeta^{2}  |U|^{2 (p-1)  } d\rho  $$
    $$\leq \frac{1}{2}\int_I     |U_{\rho}|^2 d\rho+ 
     \frac{c}{2}\left[ \int_I [\rho^{- \frac{N-1}{N}  }   \zeta]^{2p}    d\rho\right]^{\frac{1}{p}}  
     \left[ \int_I [ \rho^{-\frac{N-1}{N} }   U]^{2p } d\rho\right]^{\frac{p-1}{p}}  .$$
     Thanks to  (\ref{g7}) we may replace the term $U^{2p} $ by $\rho^{p-1}U^2$ in the last integral of the previous inequality, and it results  
   $$  \int_I [ \rho^{-\frac{N-1}{N} }   U ]^{2p } d\rho \leq  C(T) \int_I  \rho^{ -p-1 +\frac{ 2p}{N} }   U^{2 } d\rho. $$
   Now, (\ref{g1}) implies the boundedness of the last term  for $p\in(1,2)$ and we get 
    $$  \int_I [ \rho^{-\frac{N-1}{N} }   U]^{2p } d\rho \leq C(T), \quad \mbox{ for $t\leq T.$}$$
    In view of    
   $$  U(t,\rho)=  \rho W(t, \rho^{\frac{1}{N}}) $$
   we replace into Definition \ref{def} to obtain that $U$ is a weak solution of  (\ref{eqU})-(\ref{iniU}). 
   \qed
    \end{proof}
  \section{Constructing a subsolution}  
  \label{s3} 
    \setcounter{equation}{0}

%
We introduce the operator $\mathcal L$ defined as follows 
\begin{equation} \label{L} \mathcal{L} (\phi):= 
  \phi_{t}  -   \rho^{\frac{2N-2}{N} } \phi_{ \rho \rho}- 
   \chi_{_N}    \rho^{(2-p) \frac{N-1}{N}   }   \left(  \phi_{ \rho} +M  \right)   \left| \phi  \right|^{p-2}
\phi. 
\end{equation} 
Let $\rho_1$ and $\rho_2$  the following  positive numbers  
 \begin{equation}  \label{rho1}   \rho_1=\frac{1}{2}   
\end{equation}
and 
 \begin{equation}  \rho_2=\frac{2+ \gamma}{2(1+ \gamma)}  . \label{rho2} 
\end{equation}
Notice that in view of $\gamma>1$, (\ref{rho2}) guarantees 
$$\rho_2 \in \left(\frac{1}{2}, \frac{3}{4} \right).$$
We consider the function $q$, already defined in the introduction,  
$$q(\gamma):= \frac{(\gamma+2) }{4(\gamma+1)^2} \left[   \frac{ 3 \gamma}{2} +1 \right]  
$$
and the positive real numbers    
 $\gamma^*$,  $\gamma_0$ and $\gamma$   as follows 
 $$\gamma^* > 1,   \mbox{ such that   }  q(s) >\frac{1}{3} \mbox{ for any }  s  \in (1, \gamma^*),$$ 
  
\begin{equation} \label{gamma0} \gamma_0:= \min\{  1+ \frac{2-p}{N(p-1)} ,    \chi_{_N} -1 ,   1+\frac{M-6}{4} ,   \frac{N+1}{N}, \gamma^* \}
\end{equation} and 
$$\gamma \in \left( 1,  \gamma_0 \right) .$$
Then, we construct   the  following  function 
\begin{equation} \label{phi1} 
 \phi_1(\rho ,t)=   
   \frac{      \rho^{ \gamma}   }{     \rho^{\gamma}+ a(t)    }   ,  \quad  0<\rho<\rho_1 
\end{equation} 
where 
$$a(t):=   (1-  \epsilon  t)^{\frac{1}{1- \theta}} , \quad a^{\prime}(t)= - \frac{\epsilon }{1-\theta} a^{\theta} (t),  $$
for 
$$\theta:=  \frac{3-p}{2}>  2-p , \quad \mbox{ for }  p\in (1,2)   $$
and $\epsilon$ satisfying
\begin{equation} \label{epsilon}
\begin{array}{l} \epsilon\leq \min\left\{  \frac{\chi_{_N} \gamma (1- \theta) }{2^p  },  \ \frac{ \chi_{_N} \gamma  (p-1) }{2},  \ 
 (p-1) \rho_1^{\gamma  \frac{(p-1)}{2}}  \left[  \chi_{_N}  (\frac{M}{2} -3)\left(\frac{4}{3} \right)^{p-2}   -  \frac{2(1+ \gamma)  \rho_2^{2- \frac{2}{N}}}{(\rho_2- \rho_1)   (1- \rho_2)}    \right]
     \right\}.
     \end{array}
\end{equation}

\begin{lemma} \label{l3.1} 
Let $\phi_1$ be defined in (\ref{phi1}) and the differential operator $\mathcal{L}$   in (\ref{L}), then, under assumptions   (\ref{p})-(\ref{chi}), 
 we have that 
$$ \mathcal{L}(\phi_1) \leq 0,   \quad  0<\rho<\rho_1.$$
Where  $\mathcal{L}(\phi_1) $ is understood in the sense of distributions. 
\end{lemma}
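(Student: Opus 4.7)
The proof is a direct pointwise verification: since $\phi_1$ is smooth on $\{0<\rho<\rho_1\}\times\{t\ge 0\}$, it suffices to show $\mathcal{L}(\phi_1)(\rho,t)\le 0$ pointwise and the distributional conclusion follows. First I would compute, writing $f(\rho,t):=\rho^\gamma + a(t)$ and using $\phi_1 = 1 - a/f$:
\begin{equation*}
\phi_{1,t} = \frac{\epsilon\,\rho^\gamma a^\theta}{(1-\theta)f^2},\qquad \phi_{1,\rho} = \frac{\gamma\rho^{\gamma-1}a}{f^2},\qquad \phi_{1,\rho\rho} = \frac{\gamma\rho^{\gamma-2}a\bigl[(\gamma-1)a - (\gamma+1)\rho^\gamma\bigr]}{f^3}.
\end{equation*}
Inserting into (\ref{L}) and using $\phi_1^{p-1}=\rho^{\gamma(p-1)}/f^{p-1}$, the required inequality $\mathcal{L}(\phi_1)\le 0$ rearranges to
\begin{equation*}
\underbrace{\frac{\epsilon\,\rho^\gamma a^\theta}{(1-\theta)f^2}}_{\mathrm{(I)}} + \underbrace{\frac{\gamma(\gamma+1)\rho^{2\gamma-2/N}a}{f^3}}_{\mathrm{(II)}} \;\le\; \underbrace{\frac{\gamma(\gamma-1)\rho^{\gamma-2/N}a^2}{f^3}}_{\mathrm{(III)}} + \underbrace{\frac{\chi_{_N}\gamma\,\rho^{\gamma p-1+(2-p)(N-1)/N}a}{f^{p+1}}}_{\mathrm{(IV)}} + \underbrace{\frac{\chi_{_N}M\,\rho^{(2-p)(N-1)/N+\gamma(p-1)}}{f^{p-1}}}_{\mathrm{(V)}}.
\end{equation*}

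The strategy is to dominate the ``blow-up'' term (I) and the ``bad curvature'' term (II) on the left by portions of the three negative terms on the right, splitting according to the dominant summand in $f$: regime $\mathcal A=\{a\ge \rho^\gamma\}$, where $a\le f\le 2a$, and regime $\mathcal B=\{a\le\rho^\gamma\}$, where $\rho^\gamma\le f\le 2\rho^\gamma$. In regime $\mathcal A$, the ratio (I)/(IV) reduces, after replacing $f$ by $2a$, to the constant $2^p\epsilon/[\chi_{_N}\gamma(1-\theta)]$ multiplied by a product of non-negative powers of $\rho\le\rho_1$ and $a\le 1$, so the first bound on $\epsilon$ in (\ref{epsilon}) forces it to be at most $\tfrac12$. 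The matching of $\rho$-exponents needed here is exactly ensured by $\gamma<1+\frac{2-p}{N(p-1)}$ from (\ref{gamma}). The ratio (II)/(IV) in $\mathcal A$ yields $(\gamma+1)/\chi_{_N}$ times a non-negative $\rho$-power, and is absorbed into $\tfrac12$(IV) by the lower bound on $\chi_{_N}$ in (\ref{chi}) together with $\gamma<\chi_{_N}-1$. In regime $\mathcal B$, after $f\le 2\rho^\gamma$ the $\rho$-exponent in (V) simplifies to $(2-p)(N-1)/N>0$, so (V) is of order $\chi_{_N}M\rho^{(2-p)(N-1)/N}$ and the third line of (\ref{epsilon}), together with the factor $M/2-3>0$ coming from (\ref{M}), gives exactly the room required to absorb both (I) and the residual of (II); the free negative term (III) supplies an additional margin, used through the restriction $\gamma<(N+1)/N$ from (\ref{gamma}) so that $\rho^{\gamma-2/N}$ remains integrable.

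The main obstacle I anticipate is the careful bookkeeping of the five $\rho$-exponents together with the $a$- and $f$-exponents across both sub-regimes: each of the four upper bounds on $\gamma$ appearing in (\ref{gamma}) corresponds to the non-negativity of exactly one $\rho$-exponent in one of the ratios (I)/(IV), (II)/(IV), (I)/(V), (II)/(III), and any loss in the algebra makes the corresponding ratio blow up either at $\rho\to 0$ or at the interface $a=\rho^\gamma$. The condition $\gamma<\gamma^\ast$, guaranteeing $q(\gamma)>1/5$, does not enter the $\mathcal{L}(\phi_1)\le 0$ verification on the inner piece and is reserved for the matching lemma for $\phi_2$ on $(\rho_1,1)$. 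Once the term-by-term estimates above are carried out in each of the two regimes, their sum yields $\mathcal{L}(\phi_1)(\rho,t)\le 0$ pointwise on $(0,\rho_1)\times[0,1/\epsilon)$, which closes the proof.
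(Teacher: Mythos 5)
Your setup (the derivatives of $\phi_1$, the identification of the two positive terms (I), (II) and the three negative terms (III), (IV), (V), and the dichotomy according to whether $a(t)$ or $\rho^{\gamma}$ dominates $f=\rho^{\gamma}+a$) matches the paper, and your treatment of the regime $\mathcal A=\{a\ge\rho^{\gamma}\}$ is essentially the paper's Case 1. However, your regime $\mathcal B=\{a\le\rho^{\gamma}\}$ is handled incorrectly, and this is where the real difficulty of the lemma sits. The claim that (V) absorbs (I) throughout $\mathcal B$ fails: with $f\simeq\rho^{\gamma}$ one has
$$\frac{\mathrm{(I)}}{\mathrm{(V)}}\;\simeq\;\frac{\epsilon\,a^{\theta}}{(1-\theta)\chi_{_N}M\,\rho^{\gamma+(2-p)\frac{N-1}{N}}},$$
and along $a\sim\rho^{\gamma}$ this behaves like $\rho^{-\gamma(1-\theta)-(2-p)\frac{N-1}{N}}\to\infty$ as $\rho\to0$, since $\theta<1$. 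No fixed choice of $\epsilon$ in (\ref{epsilon}) repairs this. The paper resolves it by splitting $\mathcal B$ further at the threshold $a=\rho^{\gamma\frac{p-1}{1-\theta}}=\rho^{2\gamma}$: in the intermediate band $\rho^{2\gamma}\le a\le\rho^{\gamma}$ it still uses half of (IV), converting the numerator $a$ into $a^{\theta}$ at the cost of $a^{1-\theta}\ge\rho^{\gamma(p-1)}\ge 2^{1-p}f^{p-1}$ so that the $f$-powers match those of (I); only when $a<\rho^{2\gamma}$ does it switch to (V), inserting the factor $a^{\theta}\rho^{-\gamma(p-1)\theta/(1-\theta)}<1$ so that (V) acquires the same $a^{\theta}/f^{2}$ structure as (I). This three-case decomposition is the essential idea of the proof and is missing from your argument.

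A second, smaller gap: in $\mathcal B$ you propose to absorb the residual of (II) into (III) and (V). Neither works. The term (III) carries $a^{2}$ while (II) carries $a$, so $\mathrm{(II)}/\mathrm{(III)}=\frac{(\gamma+1)\rho^{\gamma}}{(\gamma-1)a}\to\infty$ as $a\to0$; and $\mathrm{(II)}/\mathrm{(V)}\lesssim\rho^{-\frac{2}{N}-(2-p)\frac{N-1}{N}}$ also blows up at the origin. (Integrability of $\rho^{\gamma-2/N}$ is irrelevant here — the inequality must hold pointwise.) The paper simply discards (III) (it has a favourable sign since $\gamma>1$) and absorbs (II) into the \emph{other} half of (IV) uniformly in both regimes, via $f^{p-2}\le\rho^{\gamma(p-2)}$ and the identity $\rho^{(p-1)(\gamma-1+\frac1N)+\gamma-\frac1N}=\rho^{p\gamma-\frac2N}\rho^{1+p(-1+\frac1N)}$ with $1+p(-1+\frac1N)<0$ by (\ref{p}); this is exactly where the condition $\gamma<\frac{\chi_{_N}}{2}-1$ enters (not $\gamma<\chi_{_N}-1$, and not via (\ref{chi}) or the third entry of (\ref{epsilon}), both of which belong to the proof of Lemma \ref{l3.2} for the outer piece $\phi_2$). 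Your attribution of the constraints $\gamma<1+\frac{M-6}{4}$ and $\gamma<\frac{N+1}{N}$ to this lemma is likewise misplaced; they are used for $\phi_2$ and in the comparison Lemma \ref{lemma4.4} respectively.
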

\begin{proof}
We first  compute the following  derivatives of $\phi_1$ 
$$
\begin{array}{rll}  \displaystyle 
\phi_{1 t}  & = &   \displaystyle 
 - \frac{ \rho^{ \gamma}  a^{\prime} (t)  }{[ \rho^{\gamma}  +a(t)]^2}   ,    
\\ [4mm] \displaystyle
\phi_{1 \rho} & = &   \displaystyle    \gamma  \frac{ a(t)   \rho^{\gamma-1}          }{[\rho^{\gamma}+ a(t)]^2}   ,   
 \\[4mm] \displaystyle
 \phi_{1 \rho \rho } & = &   \displaystyle 
  \gamma  \frac{(\gamma-1) a^2 (t)   \rho^{\gamma-2}  -      a(t) ( \gamma +1) \rho^{2\gamma-2}        }{[\rho^{\gamma}+ a(t)]^3}     ,  
\\ [4mm]  \displaystyle
  -  \rho^{\frac{2N-2}{N} }\phi_{1 \rho \rho } & = &   \displaystyle    -  \gamma  \frac{(\gamma-1) a^2 (t)   \rho^{\gamma-\frac{2}{N}}  -     a(t) (\gamma +1)  \rho^{2\gamma-\frac{2}{N}}        }{[\rho^{\gamma}+ a(t)]^3}     , 
\\ [4mm] \displaystyle
 \rho^{(2-p) \frac{N-1}{N}   }  M  |\phi_1 |^{p-2} \phi_1 & = &   \displaystyle  M \frac{ \rho^{ (p-1)( \gamma -1 + \frac{1}{N} )   +1-  \frac{1}{N}  }   }{  [\rho^{\gamma   }+ a(t)]^{p-1}}   ,   
 \\[4mm] \displaystyle
\rho^{(2-p) \frac{N-1}{N}   }   \phi_{1 \rho}    |\phi_1 |^{p-2} \phi_1 & = &   \displaystyle   \frac{ a(t) \gamma   \rho^{ (p-1)( \gamma -1 + \frac{1}{N} )   +\gamma -  \frac{1}{N}  }   }{  [\rho^{\gamma   }+ a(t)]^{p+1}}    .
\end{array}
$$
Then, 
$$ \begin{array}{lll} \mathcal{L} (\phi_1) &= & \displaystyle  
  \phi_{1t}  -   \rho^{\frac{2N-2}{N} } \phi_{1 \rho \rho}- 
   \chi_{_N}    \rho^{(2-p) \frac{N-1}{N}   }   \left(   \phi_{1 \rho} +M  \right)   \left| \phi_1  \right|^{p-2}
\phi_1 
\\ [4mm] &= & \displaystyle  
 - \frac{ \rho^{\gamma}  a^{\prime} (t) }{[ \rho^{\gamma}  +a(t)]^2}  
  -  \gamma  \frac{(\gamma-1) a^2 (t)   \rho^{\gamma-\frac{2}{N}}  -     a(t) (\gamma +1  ) \rho^{2\gamma-\frac{2}{N}}        }{[\rho^{\gamma}+ a(t)]^3}  
\\ [4mm] & & \displaystyle  
 -         \chi_{_N}      \frac{ a(t) \gamma   \rho^{ (p-1)( \gamma -1 + \frac{1}{N} )   +\gamma -  \frac{1}{N}  }   }{  [\rho^{\gamma   }+ a(t)]^{p+1}} 
 -    \chi_{_N} M
  \frac{ \rho^{ (p-1)( \gamma -1 + \frac{1}{N} )   +1-  \frac{1}{N}  }   }{  [\rho^{\gamma   }+ a(t)]^{p-1}} 
\\ [4mm] &\leq  & \displaystyle  
- \frac{ \rho^{\gamma}   a^{\prime} (t)  }{[ \rho^{\gamma}  +a(t)]^2}  
+ \gamma(\gamma+1)  \frac{  a(t)   \rho^{2 \gamma-\frac{2}{N}}       }{[\rho^{\gamma}+ a(t)]^3}   -     \chi_{_N}      \frac{ a(t) \gamma   \rho^{ (p-1)( \gamma -1 + \frac{1}{N} )   +\gamma -  \frac{1}{N}  }   }{  [\rho^{\gamma   }+ a(t)]^{p+1}}
 \\ [4mm] && \displaystyle  
   -    \chi_{_N} M
  \frac{ \rho^{ (p-1)( \gamma -1 + \frac{1}{N} )   +1-  \frac{1}{N}  }   }{  [\rho^{\gamma   }+ a(t)]^{p-1}}    
.
 \end{array} $$ 
 We now   cancel the    positive   terms in the right hand side. We first split the  first negative term in the following way   
 $$ -       \chi_{_N}      \frac{ a(t) \gamma   \rho^{ (p-1)( \gamma -1 + \frac{1}{N} )   +\gamma -  \frac{1}{N}  }   }{  [\rho^{\gamma   }+ a(t)]^{p+1}} 
 $$ $$= -     \frac{   \chi_{_N} }{2}     \frac{ a(t) \gamma   \rho^{ (p-1)( \gamma -1 + \frac{1}{N} )   +\gamma -  \frac{1}{N}  }   }{  [\rho^{\gamma   }+ a(t)]^{p+1}} 
-     \frac{ \chi_{_N} }{2}     \frac{ a(t) \gamma   \rho^{ (p-1)( \gamma -1 + \frac{1}{N} )   +\gamma -  \frac{1}{N}  }   }{  [\rho^{\gamma   }+ a(t)]^{p+1}}. $$
 To cancel the term   $ - \frac{ \rho^{\gamma}  a^{\prime}  }{[ \rho^{\gamma}  +a(t)]^2}  $ we proceed as follows: 
 $$
  - \frac{ \rho^{\gamma}  a^{\prime}(t)  }{[ \rho^{\gamma}  +a(t)]^2}  
  =
  \frac{  \epsilon \rho^{\gamma}  a^{ \theta} (t) }{ (1- \theta) [ \rho^{\gamma}  +a(t)]^2}  
 $$
 and consider  three different  cases.  
 \begin{itemize} 
 \item[Case 1.]  $a(t) \geq \rho^{\gamma}$.
  $$ \frac{  \epsilon \rho^{\gamma}  a^{ \theta } (t) }{ (1- \theta ) [ \rho^{\gamma}  +a(t)]^2}  
\leq   \frac{  \epsilon \rho^{\gamma}   [ \rho^{\gamma}  +a(t)]^{\theta }  }{ (1- \theta)[ \rho^{\gamma}  +a(t)]^{2 }}  \leq 
 \frac{ \epsilon  \rho^{\gamma}   }{(1- \theta) [ \rho^{\gamma}  +a(t)]^{2- \theta }} 
 $$
 and 
 $$ \begin{array}{lcl}  \displaystyle 
 -     \frac{ \chi_{_N} }{2}     \frac{ a(t) \gamma   \rho^{ (p-1)( \gamma -1 + \frac{1}{N} )   +\gamma -  \frac{1}{N}  }   }{  [\rho^{\gamma   }+ a(t)]^{p+1}} &  \leq  & \displaystyle
 -     \frac{ \chi_{_N} }{2}     \frac{ a(t) \gamma   \rho^{ (p-1)( \gamma -1 + \frac{1}{N} )   +\gamma -  \frac{1}{N}  }   }{ 2 a (t) [\rho^{\gamma   }+ a(t)]^{p}}  \\ [4mm] 
 & = & \displaystyle
 -     \frac{ \chi_{_N} }{4}     \frac{  \gamma   \rho^{ (p-1)( \gamma -1 + \frac{1}{N} )   +\gamma -  \frac{1}{N}  }   }{ [\rho^{\gamma   }+ a(t)]^{p}}
 .
\end{array} $$
 In view of 
 \begin{equation} \label{abc1}  (p-1)( \gamma -1 + \frac{1}{N} )    -  \frac{1}{N}<0, \quad \mbox{ i.e. }  \gamma< 1+ \frac{2-p}{N(p-1)} \end{equation} 
 and for $$\epsilon \leq  \frac{\chi_{_N} \gamma (1- \theta) }{2^p  } $$
 provided $$2- \theta \leq p$$
 we have that 
 $$ \frac{  \epsilon \rho^{\gamma}  a^{ \theta} (t) }{ (1- \theta) [ \rho^{\gamma}  +a(t)]^2}  
\leq        \frac{ \chi_{_N} }{2}     \frac{ a(t) \gamma   \rho^{ (p-1)( \gamma -1 + \frac{1}{N} )   +\gamma -  \frac{1}{N}  }   }{  [\rho^{\gamma   }+ a(t)]^{p+1}}, 
\quad \mbox{ for  }   a(t) \geq  \rho^{\gamma}  .$$
 \item[Case 2.]  $ \rho^{\gamma\frac{p-1}{1- \theta}} \leq  a(t) \leq \rho^{\gamma}$.
 \newline Notice that 
$$ \frac{p-1}{1- \theta} \geq 1$$ in view of 
$$\theta  \geq 2-p$$
and therefore 
$ \rho^{\gamma\frac{p-1}{1- \theta}}  \leq \rho^{\gamma}$.
Then 
 $$ \begin{array}{lcl}  \displaystyle 
 -     \frac{ \chi_{_N} \gamma  }{2}     \frac{ a(t)   \rho^{ (p-1)( \gamma -1 + \frac{1}{N} )   +\gamma -  \frac{1}{N}  }   }{  [\rho^{\gamma   }+ a(t)]^{p+1}} &  \leq  & \displaystyle
 -     \frac{ \chi_{_N} \gamma  }{2}     \frac{ a(t)    \rho^{ (p-1)( \gamma -1 + \frac{1}{N} )   +\gamma -  \frac{1}{N}  }   }{ 2^{p-1}  \rho^{\gamma (p-1)}  [\rho^{\gamma   }+ a(t)]^{2}}  \\ [4mm] 
 & \leq  & \displaystyle
 -     \frac{ \chi_{_N} \gamma  }{2^{p} }     \frac{  a(t)  \rho^{ (p-1)( \gamma -1 + \frac{1}{N} )   +\gamma -  \frac{1}{N}  }   }{ a^{1- \theta}  [\rho^{\gamma   }+ a(t)]^{2}} \\ [4mm]
  & \leq  & \displaystyle
 -     \frac{ \chi_{_N} \gamma  }{2^{p} }     \frac{  \ a^{\theta} (t)  \rho^{ (p-1)( \gamma -1 + \frac{1}{N} )   +\gamma -  \frac{1}{N}  }   }{  [\rho^{\gamma   }+ a(t)]^{2}}
\end{array} $$
 for $$\epsilon < \frac{\chi_{_N}\gamma (1- \theta)  }{2^p} $$ 
 and thanks to (\ref{abc1}) we claim 
 $$ \frac{  \epsilon \rho^{\gamma}  a^{ \theta} (t) }{ (1- \theta) [ \rho^{\gamma}  +a(t)]^2}  
\leq       \frac{ \chi_{_N} }{2}     \frac{ a(t) \gamma   \rho^{ (p-1)( \gamma -1 + \frac{1}{N} )   +\gamma -  \frac{1}{N}  }   }{  [\rho^{\gamma   }+ a(t)]^{p+1}}, 
\quad \mbox{ for  }    \rho^{\gamma\frac{(p-1)}{1-\theta} } \leq  a(t) \leq  \rho^{\gamma}  .$$
   \item[Case 3.]  $a(t) <\rho^{\gamma \frac{p-1}{1- \theta}}$
   \newline 
   Notice that $a(t) <\rho^{\gamma \frac{p-1}{1- \theta}}$
 is equivalent to 
 $$ \frac{ a(t) }{\rho^{\gamma \frac{p-1}{1- \theta}}}<1,  \quad  \mbox{ i.e.  }  \frac{ a^{\theta}(t) }{\rho^{\gamma \frac{(p-1) \theta}{1- \theta}}}<1, $$
then
 $$  
  \begin{array}{lcl}  \displaystyle 
  -    \chi_{_N} M
  \frac{ \rho^{ (p-1)( \gamma -1 + \frac{1}{N} )   +1-  \frac{1}{N}  }   }{  [\rho^{\gamma   }+ a(t)]^{p-1}} 
 & \leq  & \displaystyle
 -     \chi_{_N}  M    \frac{  a^{\theta}(t)   \rho^{ (p-1)( \gamma -1 + \frac{1}{N} )   +1  -  \frac{1}{N}  }   }{    \rho^{\gamma \frac{(p-1) \theta}{1-\theta  }   } [\rho^{\gamma   }+ a(t)]^{p-1}}  \\ [6mm]
  & \leq  & \displaystyle
 -    \chi_{_N}M   \frac{  a^{\theta}(t)  \rho^{ (p-1)( \gamma -1 + \frac{1}{N} )   +1 -  \frac{1}{N}  }   }{  [\rho^{\gamma   }+ a(t)]^{(p-1)( 1+ \frac{\theta}{1- \theta} )  }}
  \\ [6mm]
  & \leq  & \displaystyle
 -     \chi_{_N} M      \frac{  a^{\theta}(t)  \rho^{ (p-1)( \gamma -1 + \frac{1}{N} )   + 1 -  \frac{1}{N}  }   }{  [\rho^{\gamma   }+ a(t)]^{\frac{p-1}{1- \theta}   }}
\end{array} $$
 since $\theta =  \frac{3-p}{2}$ we have that 
  $$   -    \chi_{_N} M
  \frac{ \rho^{ (p-1)( \gamma -1 + \frac{1}{N} )   +1-  \frac{1}{N}  }   }{  [\rho^{\gamma   }+ a(t)]^{p-1}} 
   \leq 
  -     \chi_{_N}M     \frac{  a^{\theta}(t)  \rho^{ (p-1)( \gamma -1 + \frac{1}{N} )   +\gamma -  \frac{1}{N}  }   }{  [\rho^{\gamma   }+ a(t)]^2 }
  $$ 
  then 
   $$ \frac{  \epsilon \rho^{\gamma}  a^{ \theta} (t) }{ (1- \theta) [ \rho^{\gamma}  +a(t)]^2}  
\leq          \chi_{_N} M    \frac{   \rho^{ (p-1)( \gamma -1 + \frac{1}{N} )   +1-  \frac{1}{N}  }   }{  [\rho^{\gamma   }+ a(t)]^{p-1}}, 
\quad \mbox{ for  }  a(t) <  \rho^{\gamma\frac{(p-1)}{1-\theta} }   .$$
  \end{itemize} 
  Case I, II and III implies 
    \begin{equation} \label{eq22}
     \frac{ \rho^{\gamma}  a^{\prime}  }{[ \rho^{\gamma}  +a(t)]^2}  -     \frac{\chi_{_N }}{2}     \frac{ a(t) \gamma   \rho^{ (p-1)( \gamma -1 + \frac{1}{N} )   +\gamma -  \frac{1}{N}  }   }{  [\rho^{\gamma   }+ a(t)]^{p+1}} -  \chi_{_N} M    \frac{   \rho^{ (p-1)( \gamma -1 + \frac{1}{N} )   +1-  \frac{1}{N}  }   }{  [\rho^{\gamma   }+ a(t)]^{p-1}}  \leq 0.
  \end{equation}

The second term is  expressed  in the following way:
$$  \begin{array}{lll} \displaystyle 
\gamma (\gamma +1) \frac{  a(t)   \rho^{2 \gamma-\frac{2}{N}}       }{[\rho^{\gamma}+ a(t)]^3}
& = &  \displaystyle   \gamma (\gamma +1)    \frac{  a(t)        \rho^{2\gamma-\frac{2}{N}}     }{[\rho^{\gamma}+ a(t)]^{ 1+p  }  } 
[\rho^{\gamma}+ a(t)]^{ p-2  } \\ [4mm] \displaystyle 
&  \leq  & \displaystyle   \gamma  (\gamma +1)     \frac{  a(t)        \rho^{2\gamma-\frac{2}{N}+     \gamma (p-2) } }{[\rho^{\gamma}+ a(t)]^{ 1+p  }  } 
 \\ [4mm] \displaystyle 
& =  & \displaystyle   \gamma  (\gamma +1)     \frac{  a(t)        \rho^{p \gamma-\frac{2}{N}  } }{[\rho^{\gamma}+ a(t)]^{ 1+p  }  } 
\end{array} $$
and 
$$
 -     \frac{ \chi_{_N} }{2}     \frac{ a(t) \gamma   \rho^{ (p-1)( \gamma -1 + \frac{1}{N} )   +\gamma -  \frac{1}{N}  }   }{  [\rho^{\gamma   }+ a(t)]^{p+1}} 
 = -        \frac{  a(t) \gamma (\gamma+1)   \rho^{p\gamma - \frac{2}{N} }    }{  [\rho^{\gamma   }+ a(t)]^{p+1}}   
 \frac{  \chi_{_N}    }{2( \gamma +1)}   \rho^{  1+ p( -1 + \frac{1}{N} )  } 
$$
thanks to assumption (\ref{p}) 
$$
 1+ p( -1 + \frac{1}{N} )  < 1+ \frac{N}{N-1} ( -1 + \frac{1}{N} ) =0.  
$$
Then, for $$\gamma <      \frac{ \chi_{_N} }{2}  -1$$ we have that 
$\phi_1$ satisfies  $$\mathcal{L}(\phi_1)\leq 0, \qquad  \mbox{ in $\rho <\rho_1.$}$$
 \qed 
 \end{proof} 
We now consider the function $\phi_2$ defined in $(\rho_1,1)$  as follows  
\begin{equation} \label{phi2} \phi_2(\rho ,t): = \beta(t) \left( 1- \rho 
+\kappa \frac{(\rho_2 - \rho)_+(\rho-\rho_1)}{(\rho_2-\rho_1)}\right)
\end{equation}
for  $$\rho_2:= \frac{2+ \gamma}{2(\gamma+1)} $$
where  
$$\beta(t)=  \phi_1(t, \rho_1) \left(1- \rho_1 \right)^{-1}  = 2\frac{2^{- \gamma}}{2^{- \gamma} +a(t)}\leq 2 $$
and \begin{equation} \label{kappa} \kappa= 1+ \gamma. \end{equation}
Notice that the function 
$$\left( 1- \rho 
+\kappa \frac{(\rho_2 - \rho)_+(\rho-\rho_1)}{(\rho_2-\rho_1)}\right)$$
attains its maximum at 
$$\frac{\rho_1+ \rho_2 }{2} -\frac{\rho_2- \rho_1 }{2k }= \frac{1}{2 \kappa} \left(\rho_1 ( \kappa + 1 ) +\rho_2 (  \kappa-1 )  \right) = \frac{(\gamma+2) (2\gamma+1)}{4(\gamma+1)^2}
$$
and at this point 
$$ \begin{array}{lll} \phi_2 &= &\beta(t) \left[ 1- \frac{(\gamma+2) (2\gamma+1)}{4(\gamma+1)^2} + \frac{1+ \gamma}{\frac{1}{2(\gamma+1)}}\left[
(\frac{\gamma+ 2}{2(\gamma+1 )}  - \frac{(\gamma+2) (2\gamma+1)}{4(\gamma+1)^2} )(  \frac{(\gamma+2) (2\gamma+1)}{4(\gamma+1)^2}- \frac{1}{2}   )  \right] \right].
\end{array}
$$
 After some  computations we get  
$$ \begin{array}{lll} \phi_2 & = & \beta(t) \left[ 1-\frac{(\gamma+2) (2\gamma+1)}{4(\gamma+1)^2}  + \frac{(\gamma+2)\gamma}{8(\gamma+1)^2}  
   \right]  .
\end{array}
$$
which implies 
$$ \begin{array}{lll} \phi_2 & = & \beta(t) \left[ 1-\frac{(\gamma+2) }{4(\gamma+1)^2} \left[   \frac{ 3 \gamma}{2} +1 \right]
   \right]  .
\end{array}
$$
Since $$ \lim_{\gamma \rightarrow 1} \frac{(\gamma+2) }{4(\gamma+1)^2} \left[   \frac{ 3 \gamma}{2} +1 \right] = \frac{15}{32} > \frac{1}{3}$$ there exists $\gamma^{*}>1$ 
such that 
$$ \frac{(\gamma+2) }{4(\gamma+1)^2} \left[   \frac{ 3 \gamma}{2} +1 \right] \geq  \frac{1}{3} >0$$
for any $\gamma\in [1, \gamma^*]$.
Therefore for such $\gamma$ we have that  
 $$\phi_2 < \frac{ 2 \beta(t)}{3}.   $$

\begin{lemma} \label{l3.2} 
Let $\phi_2$ be defined in (\ref{phi2}) and the differential operator $\mathcal{L}$   in (\ref{L}), then, under assumptions  (\ref{p})-(\ref{Hdato2}), for 
$\chi_{_N} $ large enough, satisfying 
$$ \chi_{_N} > \frac{    \rho_2^{2- \frac{2}{N}}  2^{5-p}  (1+ \gamma) }{(\rho_2- \frac{1}{2})   (1- \rho_2)   (M -6) }    $$ 
we have that 
$$ \mathcal{L}(\phi_2) \leq 0,   \quad  0<\rho_1<\rho<1.$$
Where  $\mathcal{L}(\phi_2) $ is understood in the sense of distributions. 
\end{lemma}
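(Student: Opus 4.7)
The proof verifies $\mathcal{L}(\phi_2) \le 0$ in the distributional sense on $(\rho_1, 1)$. Since $\phi_2$ is piecewise smooth with a single interior breakpoint at $\rho = \rho_2$, the verification splits naturally into (i) a classical pointwise inequality on each of the subintervals $(\rho_1, \rho_2)$ and $(\rho_2, 1)$, and (ii) a sign check on the Dirac mass that the jump in $\phi_{2,\rho}$ at $\rho_2$ generates in the distributional second derivative. As preparation I would differentiate $\beta(t) = 2\cdot 2^{-\gamma}/(2^{-\gamma} + a(t))$ to obtain $\beta'(t) = \beta(t)\,\epsilon\, a^\theta(t)/[(1-\theta)(2^{-\gamma}+a(t))]$, giving $0 \le \beta'(t) \le c(\gamma)\beta(t)\epsilon$ with $\beta(t) \in [\beta_0, 2]$. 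On $(\rho_1,\rho_2)$ one finds $\phi_{2,\rho\rho} = -2\kappa\beta(t)/(\rho_2-\rho_1)$ constant in $\rho$, whereas $\phi_{2,\rho\rho} \equiv 0$ on $(\rho_2,1)$. The first derivative decreases from $\beta(t)\gamma$ at $\rho_1^+$ to $-\beta(t)(2+\gamma)$ at $\rho_2^-$ and then jumps to $-\beta(t)$ on $(\rho_2,1)$; combining $\beta \le 2$ with the assumption $\gamma < 1+(M-6)/4$ gives the uniform positivity $\phi_{2,\rho} + M \ge M/2 - 3 > 0$.

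On the inner interval $(\rho_1, \rho_2)$ the positive obstructions are $\phi_{2,t}$, bounded by $c\beta(t)\epsilon$, and $-\rho^{2-2/N}\phi_{2,\rho\rho} \le 2\kappa\beta(t)\rho_2^{2-2/N}/(\rho_2-\rho_1)$, while the lower bounds $\phi_2 \ge \beta(t)(1-\rho_2)>0$ and $\rho \ge \rho_1$ yield
$$\chi_{_N}\rho^{(2-p)(N-1)/N}(\phi_{2,\rho}+M)\phi_2^{p-1} \ge \chi_{_N}\rho_1^{(2-p)(N-1)/N}(M/2-3)[\beta(t)(1-\rho_2)]^{p-1}.$$
Using the explicit values $\kappa = 1+\gamma$, $\rho_2 - \rho_1 = 1/[2(1+\gamma)]$, $1-\rho_2 = \gamma/[2(1+\gamma)]$, together with $\beta(t)^{p-2} \ge 2^{p-2}$ (since $p<2$), the hypothesis $\chi_{_N} > \rho_2^{2-2/N}\,2^{5-p}(1+\gamma)/[(\rho_2-\rho_1)(1-\rho_2)(M-6)]$ absorbs the Laplacian obstruction, while the third clause in (\ref{epsilon}) absorbs $\phi_{2,t}$. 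On the outer interval $(\rho_2, 1)$ the Laplacian-type term vanishes and the inequality reduces to $\beta'(t)(1-\rho) \le \chi_{_N}\rho^{(2-p)(N-1)/N}(M-\beta(t))\beta(t)^{p-1}(1-\rho)^{p-1}$; dividing by $(1-\rho)^{p-1}$ and using $(1-\rho)^{2-p} \le 1$ with $\rho \ge \rho_2$ reduces the condition to a trivially small-$\epsilon$ bound, again covered by (\ref{epsilon}).

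The remaining contribution is the Dirac mass at $\rho = \rho_2$. Since $\phi_{2,\rho}(\rho_2^-) = -\beta(t)(2+\gamma)$ and $\phi_{2,\rho}(\rho_2^+) = -\beta(t)$, the jump $\phi_{2,\rho}(\rho_2^+) - \phi_{2,\rho}(\rho_2^-) = \beta(t)(1+\gamma) > 0$ is strictly positive, so $\phi_{2,\rho\rho}$ carries a positive Dirac measure $\beta(t)(1+\gamma)\delta_{\rho_2}$; multiplied by the negative coefficient $-\rho_2^{(2N-2)/N}$ this produces a nonpositive singular contribution to $\mathcal{L}(\phi_2)$, which only strengthens the inequality. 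Testing against any nonnegative $\zeta \in C_c^\infty(\rho_1,1)$ therefore gives $\langle\mathcal{L}(\phi_2),\zeta\rangle \le 0$.

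The main obstacle is the bookkeeping in the inner interval: one must track how the time-dependent factors $\beta(t)$, $a^\theta(t)$ and $\beta'(t)$ interact with the spatial exponents $(2-p)(N-1)/N$ and $p-1$, and verify that the prescribed thresholds in (\ref{chi})-(\ref{epsilon}) are precisely what is needed to close the estimate uniformly for $t \in (0, T_{\max})$, as $\beta(t)$ sweeps through $[\beta_0,2]$.
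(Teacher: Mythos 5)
Your proposal is correct and follows essentially the same route as the paper: the same derivative computations, the same observation that the positive jump of $\phi_{2,\rho}$ at $\rho_2$ contributes a nonpositive Dirac mass through $-\rho^{\frac{2N-2}{N}}\phi_{2,\rho\rho}$, the same use of $\gamma\leq 1+\frac{M-6}{4}$ and $\beta\leq 2$ to get $\phi_{2,\rho}+M\geq \frac{M}{2}-3>0$, and the same division of labour between the largeness of $\chi_{_N}$ (to dominate the second-derivative term on $(\rho_1,\rho_2)$) and the smallness of $\epsilon$ (to absorb $\phi_{2,t}$). The only cosmetic difference is that the paper keeps $\phi_2$ as a common factor and uses the upper bound $\phi_2\leq \frac{4}{3}$ on $\phi_2^{p-2}$ uniformly on $(\rho_1,1)$, whereas you insert the pointwise lower bounds $\phi_2\geq\beta(1-\rho_2)$ and $\rho\geq\rho_1$ on each subinterval; the resulting constants are compatible with the stated threshold for $\chi_{_N}$.
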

\begin{proof}
In order to obtain $\mathcal{L}(\phi_2)$   we  compute the derivative of $\phi_2$ 
$$
\begin{array}{ll} 
(\phi_{2})_{t}  &=   
 -  \frac{ \rho_1^{ \gamma}  a^{\prime}   (t) }{[ \rho_1^{\gamma}  +a(t)]^2}     \left( 1- \rho + \kappa \frac{(\rho_2 - \rho)_+(\rho-\rho_1)}{(\rho_2- \rho_1)}\right)  \left(1- \rho_1 \right)^{-1}  
 \\[4mm] \displaystyle
&  = -  \frac{    a^{\prime}   (t) }{[ \rho_1^{\gamma}  +a(t)]} \phi_2 
 \\[4mm] \displaystyle
&  =  \frac{ \epsilon a^{\theta}  (t)  }{(p-1)  (\rho_1^{\gamma} +a(t)) }   \phi_2, 
\\[4mm] \displaystyle
(\phi_{2})_{ \rho} & =  \left\{ \begin{array}{ll}   -  \beta(t) ( 1 + \kappa \frac{2 \rho -(\rho_2 + \rho_1) }{(\rho_2- \rho_1)} )  ,   &  \rho_1 <\rho< \rho_2, 
 \\  -  \beta (t)  ,  &  \rho_2 <\rho< 1,
   \end{array} \right.
   \\ [4mm] & \geq
 - (1+\kappa)  \beta (t) ,  \quad  \rho_1 <\rho< 1,
 \\[4mm] \displaystyle
 (\phi_{2})_{\rho \rho }  & = \left\{ \begin{array}{ll}   -  \frac{2 \kappa  \beta (t) }{(\rho_2-\rho_1)}    ,   &  \rho_1 <\rho< \rho_2,
 \\  0 ,  &  \rho_2 <\rho< 1, 
   \end{array} \right.
 \end{array}
$$
 $$
\begin{array}{lcl} 
  - 
   \chi_{_N}    \rho^{(2-p) \frac{N-1}{N}   }   \left(   \phi_{2 \rho} +M  \right)   \left| \phi_2  \right|^{p-1}
  & \leq  & - 
   \chi_{_N}    \rho^{(2-p) \frac{N-1}{N}   }   (M-  (1+ \kappa) \beta(t))   \left| \phi_2  \right|^{p-1}.
\end{array}
$$
Notice that $(\phi_2)_{\rho}$ presents a positive jump at $\rho=\rho_2$ for any $\kappa >0$, then, we have 
\begin{equation} \label{eq10} 
\begin{array}{ll}
\mathcal{L}( \phi_2)  \leq &     \displaystyle        \frac{\epsilon a^{ \theta}  (t) \phi_2 }{(p-1)  (\rho_1^{\gamma} +a(t)) }     \\ [4mm]  & \displaystyle +  \frac{2 \kappa  \beta (t) }{(\rho_2-\rho_1)} 
\chi_{[\rho_1,\rho_2]}  \rho^{2- \frac{2}{N}} 
  -    \chi_{_N}     \rho_2^{(2-p) \frac{N-1}{N}} (M-(1+ \kappa )\beta(t))\left| \phi_2   \right|^{p-1}.
  \end{array}
  \end{equation}
In  view of $a (t)\leq 1$ and  $\theta= (3-p)/2$  it results 
$$
  \frac{\epsilon a^{\theta}   (t)\phi_2 }{(p-1)  (\rho_1^{\gamma} +a(t)) }  \leq  \frac{\epsilon   \phi_2 }{(p-1)  (\rho_1^{\gamma} +a(t))^{\frac{p-1}{2}} } .
  $$
Since 
$$\phi_2 \geq \beta(t) (1- \rho) , $$  
the second term is bounded as follows
$$ \begin{array}{lll} \displaystyle 
  \frac{2 \kappa  \beta (t) }{(\rho_2-\rho_1)} 
\chi_{[\rho_1,\rho_2]}  \rho^{2- \frac{2}{N}} 
& =  & \displaystyle  \frac{2 (1+ \gamma)  \beta (t) }{(\rho_2-\rho_1)}\chi_{[\rho_1,\rho_2]}  \rho^{2- \frac{2}{N}} 
\\ [4mm] &
\leq  & \displaystyle   
\frac{2 (1+ \gamma)  }{(\rho_2-\rho_1)}\chi_{[\rho_1,\rho_2]}  \rho^{2- \frac{2}{N}} \frac{\phi_2}{(1-  \rho )}
  \\ [4mm] &
\leq  & \displaystyle  \frac{2 (1+ \gamma)    }{(\rho_2- \rho_1)   (1- \rho_2)}    \rho_2^{2- \frac{2}{N}}  \phi_2.  
\end{array}
$$
Then,   thanks to  the previous computations,  for  $$M >  (1+ \kappa ) \beta, $$ we get
$$   \begin{array}{l} \displaystyle 
\mathcal{L}( \phi_2)     \\ \displaystyle 
 \leq   \phi_2 \left[  \frac{\epsilon   }{(p-1)  (\rho_1^{\gamma} +a(t))^{\frac{p-1}{2} } }+  \frac{2(1+ \gamma) }{    (\rho_2- \rho_1) (1- \rho_2)}    \rho_2^{2- \frac{2}{N}}  
 -    \chi_{_N}    (M-(1+ \kappa)\beta (t)) \left| \phi_2   \right|^{ p-2} \right] .
 \end{array}
$$
 Since $p<2$   and $$\phi_2 <\frac{2}{3} \beta   \leq   \frac{2}{3(1- \rho_1)}  = \frac{4}{3},   $$ 
 it results 
\begin{equation} \label{second} \begin{array}{l} \mathcal{L}( \phi_2) \leq  \\  [4mm]   \displaystyle \phi_2  [  \frac{\epsilon   }{(p-1) \rho_1^{\gamma 
\frac{(p-1)}{2}}  }  
+ \frac{2 (1+ \gamma) }{(\rho_2- \rho_1)   (1- \rho_2)}    \rho_2^{2- \frac{2}{N}} 
 -    \chi_{_N}  (M- 2(1+ \kappa)) \left(\frac{4}{3}\right)^{p-2}   ].
 \end{array} 
  \end{equation} 
 provided 
 $$ M >   2(1+\kappa) = 2(2+ \gamma ).$$
Thanks to (\ref{gamma0})     we have 
$$\gamma \leq 1+\frac{M- 6}{4}   $$
and the term 
$$  -    \chi_{_N}  (M- 2(1+ \kappa)) \left(\frac{4}{3}\right)^{p-2}  \leq  -    \chi_{_N}  (M- 2(3+\frac{M-6}{4}))  \left(\frac{4}{3}\right)^{p-2}    = -    \chi_{_N}  (\frac{M}{2} -3)  \left(\frac{4}{3}\right)^{p-2} $$
then, for 
$$\chi_N >   \frac{2 (1+ \gamma) }{(\rho_2- \rho_1)   (1- \rho_2)}   \left(\frac{4}{3}\right)^{2-p}  \rho_2^{2- \frac{2}{N}} (\frac{M}{2} -3)^{-1}    =
\frac{    \rho_2^{2- \frac{2}{N}}     2^2(1+ \gamma) }{(\rho_2- \rho_1)   (1- \rho_2)   (M -6) }   \left(\frac{4}{3}\right)^{2-p}  
$$
 and 
 $$\epsilon <   (p-1) \rho_1^{\gamma  {(p-1)}{2}}  \left[  \chi_{_N}  (\frac{M}{2} -3) \left(\frac{4}{3}\right)^{p-2}  -  \frac{2(1+ \gamma) }{(\rho_2- \rho_1)   (1- \rho_2)}    \rho_2^{2- \frac{2}{N}} \right] 
  $$
we have 
  $$\mathcal{L}( \phi_2) \leq 0, \qquad  \rho_1 <\rho <1,$$
and the proof ends.
 \qed
 \end{proof} 

 
  %
  %
  %
  %
  
  %
  %
  %
  
  %
  \section{A comparison Lemma  and uniqueness of solutions}  
  \label{s5}
  \setcounter{equation}{0}
  In this section we prove a comparison lemma and  uniqueness of solutions under suitable assumptions in the initial data. We first present some previous   results. 

   \begin{lemma}
	\label{lemma4.1}  Let $p$ be a positive constant satisfying  $ p \in (1, 2)$ (i.e. $p-1 \in (0,1)$) and $x$ and $y$ be positive numbers such that 
	 $$x\geq 0, \quad   y>0, \quad   x< k y,$$  for some $k \in (0,1).$   Then, 
	\begin{equation*}
	y^{p-1} -x^{p-1} =(p-1) \xi^{p-2}  (y-x),
	\end{equation*}
	where   $\xi$   satisfies 
	\begin{equation} \label{xi} \xi \geq k_0  y  , \quad \mbox{ for  \  }
		 k_0 :=  \left( (1-p)(1-k) \right)^{\frac{1}{p-2} } .\end{equation}
\end{lemma}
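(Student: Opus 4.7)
The plan is to apply the one-dimensional Mean Value Theorem to $f(t)=t^{p-1}$ on the interval $[x,y]$ and then to read off the desired lower bound for $\xi$ from the resulting identity.

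First, since $k\in(0,1)$ and $x<ky$, we have $x<y$ and, more quantitatively, $y-x>(1-k)y>0$. The function $f(t)=t^{p-1}$ is continuous on $[0,\infty)$ and differentiable on $(0,\infty)$ with derivative $f'(t)=(p-1)t^{p-2}$, so the Mean Value Theorem provides $\xi\in(x,y)$ with
\[
y^{p-1}-x^{p-1}=(p-1)\,\xi^{p-2}(y-x),
\]
which is the equality stated in the lemma (the case $x=0$ is included since $f$ is continuous up to the boundary).

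Next, I would isolate $\xi^{p-2}$ and estimate from above using $x^{p-1}\ge 0$ together with $y-x\ge(1-k)y$:
\[
\xi^{p-2}=\frac{y^{p-1}-x^{p-1}}{(p-1)(y-x)}\le\frac{y^{p-1}}{(p-1)(1-k)\,y}=\frac{y^{p-2}}{(p-1)(1-k)}.
\]
Because $p\in(1,2)$ the exponent $p-2$ is strictly negative, so $t\mapsto t^{p-2}$ is strictly decreasing on $(0,\infty)$ and an upper bound on $\xi^{p-2}$ converts to a lower bound on $\xi$. Raising the previous display to the (negative) power $1/(p-2)$ and flipping the inequality yields
\[
\xi\ge\bigl((p-1)(1-k)\bigr)^{1/(2-p)}\,y,
\]
which is the required inequality $\xi\ge k_0 y$.

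The argument is essentially straightforward and I do not foresee a real obstacle; the only delicate point is the sign of $p-2$, which must be tracked carefully so that the inequality flips correctly when one inverts. The hypothesis $x<ky$ with $k<1$ is used only to guarantee the strict positivity of $y-x$, so that division is legitimate and produces the factor $(1-k)$ in the denominator; if $x$ were allowed to approach $y$, no universal lower bound of the form $k_0 y$ with $k_0>0$ would be possible. The resulting uniform comparison $\xi\ge k_0 y$ is what will allow, in the comparison step of Section \ref{s5}, a pointwise control of $\xi^{p-2}$ by a multiple of $y^{p-2}$ when linearizing the singular nonlinearity $|U|^{p-2}U$.
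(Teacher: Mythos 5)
Your proof is correct and follows essentially the same route as the paper: apply the Mean Value Theorem to $t\mapsto t^{p-1}$, bound $y^{p-1}-x^{p-1}\le y^{p-1}$ and $y-x\ge(1-k)y$, and invert the negative exponent to turn the upper bound on $\xi^{p-2}$ into a lower bound on $\xi$. Your final constant $\bigl((p-1)(1-k)\bigr)^{1/(2-p)}$ is the correct one; the paper's displayed $k_0=\bigl((1-p)(1-k)\bigr)^{1/(p-2)}$ contains sign and exponent typos (as written it is not even a positive real, and with the sign fixed it would exceed $1$, contradicting $\xi<y$), so your version should be regarded as the intended statement.
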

\begin{proof}   The proof is a direct application of Mean Value Theorem, where $\xi$ satisfies
$$\xi^{ 2-p } = \frac{ (p-1)(y-x)}{ y^{p-1}-x^{p-1}} \geq \frac{(p-1)(1-k )y  }{  y^{p-1} }
= (1-p)(1-k) y^{2-p}  ,  $$
i.e.,
$$\xi    \geq \left(  (1-p)(1-k)\right)^{\frac{1}{2-p}} y $$
which ends the proof. 
\qed
  \end{proof}
      \begin{lemma}
	\label{lemma4.3}   
Let   $\epsilon_1$,  $\delta$ and $\kappa$ any strictly positive numbers,  such that    
$$ 0< \epsilon_1 \leq \kappa .$$ 
Let  $I=(0,1)$, and $u \in L^2_{\rho^{- \delta}}(I)$, then,    

		\begin{equation} \label{rho} 
		\int_I \rho^{\epsilon_1 -\delta} u^2 d\rho \leq \rho_0^{\epsilon_1} \int_{I} \rho^{- \delta } u^2 d \rho +   
		\rho_0^{\epsilon_1-\kappa} \int_{I} \rho^{\kappa- \delta } u^2 d \rho 
		 \end{equation}
		 for any $\rho_0 \in (0,1)$.
\end{lemma}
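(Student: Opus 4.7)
The plan is to split the integration domain $I=(0,1)$ at the point $\rho_0$ and bound each piece by the corresponding term on the right-hand side, using only monotonicity of the power function together with the sign information given by the hypothesis $0<\epsilon_1\le \kappa$.

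First I would write
$$\int_I \rho^{\epsilon_1-\delta} u^2\, d\rho = \int_0^{\rho_0}\rho^{\epsilon_1-\delta} u^2\, d\rho + \int_{\rho_0}^1\rho^{\epsilon_1-\delta} u^2\, d\rho .$$
On the subinterval $(0,\rho_0)$ I factor $\rho^{\epsilon_1-\delta}=\rho^{\epsilon_1}\cdot\rho^{-\delta}$ and use the fact that, since $\epsilon_1>0$, the map $\rho\mapsto\rho^{\epsilon_1}$ is increasing on $(0,1)$, hence $\rho^{\epsilon_1}\le \rho_0^{\epsilon_1}$ for $\rho\le\rho_0$. This yields
$$\int_0^{\rho_0}\rho^{\epsilon_1-\delta} u^2\, d\rho \le \rho_0^{\epsilon_1}\int_0^{\rho_0}\rho^{-\delta} u^2\, d\rho \le \rho_0^{\epsilon_1}\int_I \rho^{-\delta} u^2\, d\rho,$$
which is exactly the first term on the right-hand side of \eqref{rho}.

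On the subinterval $(\rho_0,1)$ I factor instead as $\rho^{\epsilon_1-\delta}=\rho^{\epsilon_1-\kappa}\cdot\rho^{\kappa-\delta}$. Since by hypothesis $\epsilon_1-\kappa\le 0$, the map $\rho\mapsto\rho^{\epsilon_1-\kappa}$ is non-increasing on $(0,1)$, so for $\rho\ge\rho_0$ we have $\rho^{\epsilon_1-\kappa}\le \rho_0^{\epsilon_1-\kappa}$. Hence
$$\int_{\rho_0}^1\rho^{\epsilon_1-\delta} u^2\, d\rho \le \rho_0^{\epsilon_1-\kappa}\int_{\rho_0}^1\rho^{\kappa-\delta} u^2\, d\rho \le \rho_0^{\epsilon_1-\kappa}\int_I \rho^{\kappa-\delta} u^2\, d\rho.$$
Adding the two bounds gives \eqref{rho}.

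There is no serious obstacle here; the only subtlety is using the sign of $\epsilon_1$ and of $\epsilon_1-\kappa$ correctly to pick, on each piece, the factorization that turns the variable weight into the constant $\rho_0^{\epsilon_1}$ or $\rho_0^{\epsilon_1-\kappa}$. The finiteness of the right-hand side whenever $u\in L^2_{\rho^{-\delta}}(I)$ is automatic, because $\rho^{\kappa-\delta}\le \rho^{-\delta}$ on $I$ for $\kappa\ge 0$, so the second integral is also bounded by $\|u\|_{L^2_{\rho^{-\delta}}(I)}^2$.
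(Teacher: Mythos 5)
Your proof is correct and follows essentially the same route as the paper: split the integral at $\rho_0$, use $\rho^{\epsilon_1}\le\rho_0^{\epsilon_1}$ on $(0,\rho_0)$ and $\rho^{\epsilon_1-\kappa}\le\rho_0^{\epsilon_1-\kappa}$ on $(\rho_0,1)$, then enlarge each integral to all of $I$. Nothing is missing.
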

\begin{proof} 
 Since 
 $$ \begin{array}{lll} \displaystyle 
 \int_I \rho^{\epsilon_1 -\delta} u^2 d\rho  & = & \displaystyle \int_0^{\rho_0} \rho^{\epsilon_1 -\delta} u^2 d\rho
 +\int_{\rho_0}^1 \rho^{\epsilon_1 -\delta} u^2 d\rho,
 \\ [4mm]  \displaystyle  \int_0^{\rho_0} \rho^{\epsilon_1 -\delta} u^2 d\rho & \leq &  \displaystyle   \rho_0^{\epsilon_1} \int_{0}^{\rho_0} \rho^{- \delta } u^2 d \rho , 
  \\ [4mm]   \displaystyle  \int_{\rho_0}^1 \rho^{\epsilon_1 -\delta} u^2 d\rho  &  \leq   &  \displaystyle    \rho_0^{\epsilon_1- \kappa} \int_{\rho_0}^1 \rho^{\kappa - \delta } u^2 d \rho 
\end{array}  $$
 hold, we obtain (\ref{rho})
in view of 
$$  \int_0^{\rho_0} \rho^{- \delta } u^2 d \rho  \leq  \int_I \rho^{ - \delta } u^2 d \rho $$
and 
$$  \int_{\rho_0}^1 \rho^{\kappa - \delta } u^2 d \rho  \leq  \int_I \rho^{\kappa - \delta } u^2 d \rho $$
for any $\rho_0 \in (0,1)$.
 \qed 
 \end{proof}
   \begin{lemma} \label{lemma4.4}
   Let $p >1$  and $U$ a solution to (\ref{eqU})   with the initial data $U_0$ satisfying 
   $$U_0 \geq \phi(0,x).$$ 
  Then, under assumptions of Theorem   (\ref{t1}),    
   the solution $U$ satisfies, $$U (t,\rho) \geq \phi(t,\rho), \quad  \rho \in (0,  1), \quad t < T_{bu}:=  \frac{1}{\epsilon}$$
   for $\epsilon$ defined in (\ref{epsilon}). 
   \end{lemma}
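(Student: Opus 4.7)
The plan is to prove $U \geq \phi$ by a weighted energy estimate on $W:=(\phi-U)_+$, following the standard comparison strategy for degenerate parabolic problems adapted to the singular weights appearing in (\ref{eqU}). Since $\mathcal{L}(\phi) \leq 0 = \mathcal{L}(U)$ in the sense of distributions, subtraction yields
$$W_t - \rho^{\frac{2N-2}{N}} W_{\rho\rho} \leq \chi_{_N}\, \rho^{(2-p)\frac{N-1}{N}}\Bigl[(\phi_\rho+M)\phi^{p-1}-(U_\rho+M)U^{p-1}\Bigr]$$
on the set $\{\phi>U\}$, and the right-hand side splits as $W_\rho\,\phi^{p-1}+(U_\rho+M)(\phi^{p-1}-U^{p-1})$. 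The mean value theorem writes $\phi^{p-1}-U^{p-1}=(p-1)\xi^{p-2}W$ with $\xi\in(U,\phi)$; the first task is to turn this singular factor into a controlled one.

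For that I would split the region $\{\phi>U\}$ into $A=\{\tfrac{1}{2}\phi\leq U<\phi\}$ and $B=\{0\leq U<\tfrac{1}{2}\phi\}$. On $A$, Lemma \ref{lemma4.1} with $k=\tfrac12$ gives $\xi\geq k_0\phi$, so $\xi^{p-2}\leq k_0^{p-2}\phi^{p-2}$. On $B$, one has $\phi\leq 2W$ directly, hence $\phi^{p-1}-U^{p-1}\leq \phi^{p-1}\leq 2\phi^{p-2}W$. In either region the uniform pointwise bound $\phi^{p-1}-U^{p-1}\leq C\phi^{p-2} W$ is valid, which removes the apparent singularity of $\xi^{p-2}$ near the origin.

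Next I would multiply the differential inequality by $W\,\rho^{-\sigma}$ for a suitable $\sigma>0$ chosen so that Hardy's inequality (Lemma \ref{lemma4.2}) applies to the weight $\rho^{\frac{2N-2}{N}-\sigma}$, and integrate over $I=(0,1)$. The parabolic term produces $\tfrac{1}{2}\tfrac{d}{dt}\int_I W^2\rho^{-\sigma}\,d\rho$; integration by parts of the diffusion gives a nonnegative Dirichlet term plus a zero-order remainder that is reabsorbed via Lemma \ref{lemma4.2}. The transport piece $\chi_{_N}\rho^{(2-p)\frac{N-1}{N}}\phi^{p-1}W_\rho W$ is handled by Young's inequality against the Dirichlet term, using $\phi\leq c\rho^{\gamma}$ near zero. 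The critical piece $\chi_{_N}\rho^{(2-p)\frac{N-1}{N}}(U_\rho+M)\phi^{p-2}W^2$ is bounded using the pointwise estimate $\rho^{(2-p)\frac{N-1}{N}}\phi^{p-2}\leq c\,\rho^{\gamma(p-2)+(2-p)\frac{N-1}{N}}$ whose exponent is nonnegative exactly because $\gamma<1+\frac{2-p}{N(p-1)}$ in (\ref{gamma}); the factor $U_\rho$ is placed in $L^2$ using the weighted Sobolev estimates from Lemma \ref{lemma2.2}, with the remaining $\rho$-weight absorbed using Lemma \ref{lemma4.3}. This produces
$$\frac{d}{dt}\int_I W^2 \rho^{-\sigma}\,d\rho \;\leq\; g(t)\int_I W^2\rho^{-\sigma}\,d\rho,\qquad g\in L^1(0,T),$$
for every $T<T_{bu}=1/\epsilon$. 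Since $U_0\geq \phi(0,\cdot)$ by (\ref{Hdato2}), $W(0,\cdot)\equiv 0$, and Gronwall's lemma forces $W\equiv 0$, i.e.\ $U\geq\phi$ on $(0,T_{bu})\times I$.

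The hard part will be choosing $\sigma$ together with the splitting in $A$ and $B$ so that every integral that appears is finite: one has to reconcile the singularity $\rho^{(2-p)(N-1)/N}\phi^{p-2}$ at the origin with the integrability of $U_\rho$ (only $L^2$ with a boundary-friendly weight), the Hardy exponent constraint $\sigma\neq 1$, and the jump discontinuities of $\phi_\rho$ at $\rho_1$ and $\rho_2$ (where $\mathcal{L}(\phi)$ contains a nonnegative singular measure that one must place on the favorable side of the inequality). The inequality $\gamma<1+\frac{2-p}{N(p-1)}$ from (\ref{gamma}) and the Hardy inequality in Lemma \ref{lemma4.2} are exactly what make the weight exponents fit; the rest is routine once those parameters are tuned.
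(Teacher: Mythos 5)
Your overall strategy coincides with the paper's: a weighted $L^2$ energy estimate for the positive part of $\phi-U$ (the paper's $w_+$, your $W$) against a weight $\rho^{\frac2N-\delta}$, the mean value theorem with the near/far two-case splitting that Lemma \ref{lemma4.1} encodes, Lemma \ref{lemma4.3} to redistribute the weights, and Gronwall. The genuine gap is in your algebraic decomposition of the chemotactic difference. Writing
$$(\phi_\rho+M)\phi^{p-1}-(U_\rho+M)U^{p-1}=W_\rho\,\phi^{p-1}+(U_\rho+M)\bigl(\phi^{p-1}-U^{p-1}\bigr)$$
leaves the integral $\chi_{_N}\int\rho^{(2-p)\frac{N-1}{N}-\sigma}\,U_\rho\,(\phi^{p-1}-U^{p-1})\,W\,d\rho$, in which the unbounded factor $U_\rho$ multiplies a quantity quadratic in $W$. ``Placing $U_\rho$ in $L^2$'' via Cauchy--Schwarz yields, even after using $\phi^{p-1}-U^{p-1}\le C\phi^{p-2}W$ and the boundedness of $W$, a bound of the form $g(t)\sqrt{y(t)}$ with $y(t)=\int_I W^2\rho^{-\sigma}d\rho$; the sublinear Gronwall inequality $y'\le g\sqrt{y}$ with $y(0)=0$ does \emph{not} force $y\equiv 0$, so the argument does not close. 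The paper avoids this term entirely by grouping $\phi_\rho\phi^{p-1}-U_\rho U^{p-1}=\frac1p(\phi^p-U^p)_\rho$ and integrating by parts, so the derivative lands on the weight and on $w_+$, and only $(\phi^p-U^p)_+\le p\,\phi^{p-1}w_+$ ever appears; $U_\rho$ never occurs in isolation. Your version might be rescued by substituting $U_\rho=\phi_\rho-W_\rho$ and using $W\le\phi\le\frac43$ together with Young against the degenerate Dirichlet term, but that is a different device from the one you describe, and you would still need to check the weight exponents.

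Two further places where the sketch is too optimistic. First, the exponent of $\rho^{(2-p)\frac{N-1}{N}}\phi^{p-2}$ near the origin is $(2-p)\bigl(\frac{N-1}{N}-\gamma\bigr)<0$ because $\gamma>1$, not nonnegative as you assert; the paper controls the resulting $\int\rho^{-\delta}w_+^2$ contribution not by Hardy's inequality but by choosing $\delta\in(1,\gamma)$ so that the zero-order remainder $\frac{(2-\delta)(1-\delta)}{2}\int\rho^{-\delta}w_+^2$ coming from integrating the diffusion by parts is itself \emph{negative}, and then tuning the small coefficients $\frac{(2-\delta)(\delta-1)}{8}$ in $I_{2b}$ and $I_{3a}$ to sit under it. Second, $\phi^{p-2}$ also blows up like $(1-\rho)^{p-2}$ at $\rho=1$ (since $p<2$ and $\phi_2\sim\beta(1-\rho)$ there); this forces the separate localized argument of $I_{3c}$, which absorbs $\int_{1-\rho_0'}^{1}(1-\rho)^{p-2}w_+^2$ into the Dirichlet term via a one-dimensional Sobolev estimate near $\rho=1$. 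Your pointwise bound $\rho^{(2-p)\frac{N-1}{N}}\phi^{p-2}\le c\,\rho^{\gamma(p-2)+(2-p)\frac{N-1}{N}}$ ignores this boundary singularity altogether.
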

   \begin{proof}
   We proceed by contradiction and assume that there exists $t_0 \in (0, T_{bu})$ and $\rho_3 \in (0,1)$  such that 
$$ U (t_0,\rho_3) < \phi(t_0,\rho_3). $$
Then, due to the continuity of $U$ we have that 
\begin{equation} \label{contradiction}   \int_{I_T} \rho^{-\delta}(\phi(t, \rho)-U(t, \rho))_+^2 d\rho dt >0. 
\end{equation}
We denote by 
  $w$ the difference between  $\phi$ and $U$, i.e.  
  $$w= \phi-U$$
  which satisfies the equation 
  $$
 w_t  -  \rho^{\frac{2N-2}{N} } {w}_{\rho \rho }  \    \leq    \
  \chi_{_N}   \rho^{(2-p) \frac{N-1}{N}   }  \frac{1}{p} 
  \left(   (\phi ^{p})_{\rho} -(U^{p})_{\rho}   \right) 
  + M
  \chi_{_N}   \rho^{(2-p) \frac{N-1}{N}  }   \left(     |\phi|^{p-2}  \phi-    |U|^{p-2}  U\right)
  $$
  in the  sense of distributions. 
  We now multiply by $\rho^{ \frac{2}{N} -\delta } w_+$ for some
\begin{equation} \label{delta} \delta \in (1,\gamma   )
\end{equation}
  then, after integration  over $  I_T:= I \times (0,T)  \mbox{ for }  I=(0,1) $ we get 
 \begin{equation} \label{eq1}  \begin{array}{ll} \displaystyle 
  \frac{1}{2} \int_{I} w_+^2 \rho^{\frac{2}{N} - \delta } d\rho \big|_T
  &  \    \leq   \  \displaystyle  \int_{I_T}   \rho^{2- \delta}    {w}_{\rho \rho } w_+ d\rho dt
 \\ [4mm] & \displaystyle 
 + \frac{ \chi_{_N} }{p} \int_{I_T}  \rho^{(2-p) \frac{N-1}{N}  +\frac{2}{N} -\delta }  
  \left( (  \phi^{p})_{\rho} - (U^{p})_{\rho}   \right) w_+ d\rho dt  + \\ [4mm] & \displaystyle 
    M
  \chi_{_N}  \int_{I_T}     \rho^{(2-p) \frac{N-1}{N}  + \frac{2}{N}- \delta }   \left(   \left|  {\phi} \right|^{p-2}  \phi-   \left|  {U} \right|^{p-2}  U\right) w_+ d\rho dt .
  \end{array}
  \end{equation} 
  For simplicity we label  the integrals in the previous equation in the following way 
  $$\begin{array}{lll}
  I_1 & := & \displaystyle  \int_{I_T}  \rho^{ 2- \delta }  {w}_{\rho \rho } w_+ d\rho dt,  \\ [4mm]
  I_2 & :=  & \displaystyle   \frac{ \chi_{_N} }{p}  \int_{I_T}   \rho^{(2-p) \frac{N-1}{N}  +\frac{2}{N} -\delta }  
  \left(  ( \phi^{p})_{\rho} - (U^{p})_{\rho}   \right) w_+ d\rho dt ,
  \\  [4mm] I_3 &  := & \displaystyle M
  \chi_{_N} \int_{I_T}  \rho^{(2-p) \frac{N-1}{N}  + \frac{2}{N}- \delta }   \left(   \left|  {\phi} \right|^{p-2}  \phi-   \left|  {U} \right|^{p-2}  U\right) w_+ d\rho dt.
  \end{array}
      $$
      Then, (\ref{eq1})  is  expressed
      as follows
      \begin{equation} \label{eq2}    
  \frac{1}{2} \int_{I} w_+^2 \rho^{\frac{2}{N} - \delta } d\rho  \big|_T =  I_1+ I_2+ I_3.  \end{equation} 
  Notice that,   since $$\lim_{\rho \rightarrow 0^+ } \phi_{\rho} \phi \rho^{2- \delta}
  =  \lim_{\rho \rightarrow 1^- } \phi_{\rho} \phi \rho^{2- \delta} =0, $$
   and 
   $$\lim_{\rho \rightarrow 0^+ }   \phi^2 \rho^{1- \delta}
  =  \lim_{\rho \rightarrow 1^- }   \phi^2 \rho^{1- \delta} =0, $$
  we have 
   $$ \begin{array}{lcl}  I_1  & =  & \displaystyle   \int_{I_T} \rho^{ 2- \delta }  {w}_{\rho \rho } w_+ d\rho  dt 
  \\ [4mm] & =  & \displaystyle
     -   \int_{I_T}   \rho^{2- \delta }  |{w_+}_{\rho }|^2d\rho   dt  -(2- \delta )   \int_{I_T}  \rho^{1- \delta }  (w_+)_{\rho } w_+d\rho  dt
       \\ [4mm] & =  & \displaystyle
   -    \int_{I_T} \rho^{2- \delta }  |{w_+}_{\rho }|^2d\rho   dt -  \frac{2- \delta}{2}   \int_{I_T}   \rho^{1- \delta}  (w_+)^2_{\rho } d\rho   dt
       \\ [4mm] & =  & \displaystyle
   -   \int_{I_T}  \rho^{2- \delta }  |{w_+}_{\rho }|^2d\rho  dt + \frac{(2- \delta)(1-\delta ) }{2}   \int_{I_T} \rho^{-\delta}  (w_+)^2  d\rho    dt
     \end{array}
       $$    
      Therefore  \begin{equation}
   \label{I1}
I_1=-   \int_{I_T}  \rho^{2- \delta }  |{w_+}_{\rho }|^2d\rho     dt +   \frac{(2- \delta)(1-\delta ) }{2}     \int_{I_T} \rho^{-\delta }  (w_+)^2  d\rho   dt .
   \end{equation}
  For simplicity in the notation, we label the previous integrals in the following way 
   \begin{equation}
   \label{I1a}
I_{1a}=-    \int_{I_T} \rho^{2- \delta }  |(w_+)_{\rho }|^2d\rho      dt    \end{equation}
 and 
     \begin{equation}
   \label{I1b}
I_{1b}=   \frac{(2- \delta)(1-\delta ) }{2}     \int_{I_T} \rho^{-\delta }  w_+^2  d\rho    dt .
   \end{equation}
      Now we  integrate by parts  in  $I_2$,
       $$ \begin{array}{lcl}  I_2  & =  & \displaystyle   \frac{\chi_{_N}}{2p}     \int_{I_T} \rho^{(2-p) \frac{N-1}{N}+ \frac{2}{N} -\delta  }  
  ( \phi^{p}_{\rho} - U^{p}_{\rho})    w_+ d\rho    dt      \\ [4mm] &  =  & \displaystyle 
-  \frac{\chi_{_N}}{p}     \int_{I_T}  \rho^{(2-p) \frac{N-1}{N}+ \frac{2}{N} -\delta  }  
  ( \phi^{p}  - U^{p} )_+   (w_+)_{\rho} d\rho    dt 
   \\ [4mm] &    & \displaystyle  
-((2-p) \frac{N-1}{N}+ \frac{2}{N} -\delta )   \frac{\chi_{_N}}{p}      \int_{I_T} \rho^{(2-p) \frac{N-1}{N}+ \frac{2}{N} -\delta -1 }  
     ( \phi^{p}  - U^{p} )_+     w_+  d\rho    dt ,
  \end{array}
  $$
  where the boundary terms    $$\rho^{(2-p) \frac{N-1}{N} + \frac{2}{N} - \delta} (\phi^p - U^p)_{+} w_+, \quad   \mbox{ at } \rho=0,1$$ 
  are equal to $0$ as a consequence of $|\phi| < c\rho^{\gamma}  $ and $\delta <\gamma$.
  We label the previous integrals in the following way 
   $$\begin{array}{lll}
  I_{2a} & = & \displaystyle
 -\frac{\chi_{_N}}{p}      \int_{I_T}  \rho^{(2-p) \frac{N-1}{N}+ \frac{2}{N} -\delta  }  
    ( \phi^{p}  - U^{p} )_+    (w_+)_{\rho} d\rho    dt  \\ [4mm]
  I_{2b} & :=  & \displaystyle   -((2-p) \frac{N-1}{N}+ \frac{2}{N} -\delta )   \frac{\chi_{_N}}{p}     \int_{I_T}  \rho^{(2-p) \frac{N-1}{N}+ \frac{2}{N} -\delta -1 }  
   ( \phi^{p}  - U^{p} )_+   w_+  d\rho    dt 
  \end{array}
      $$
to  write
  $$I_2= I_{2a} + I_{2b}.
     $$
    Notice that,  thanks to Young inequality
     $$\begin{array}{lll}
    I_{2a} & := & \displaystyle   \frac{\chi_{_N}}{p}     \int_{I_T} \rho^{(2-p) \frac{N-1}{N}+ \frac{2}{N} -\delta  }  
   ( \phi^{p}  - U^{p})_+    (w_+)_{\rho} d\rho    dt 
      \\ [4mm]
 & =  & \displaystyle   
   \frac{\chi_{_N}}{p}      \int_{I_T}  \rho^{  2-\delta  -p \frac{N-1}{N}  }   ( \phi^{p}  - U^{p} )_+ (w_+)_{\rho}
    d\rho    dt 
   \\ [4mm]
 & \leq  & \displaystyle  
 \frac{1}{2}     \int_{I_T}  \rho^{2 -\delta   }   | (w_+)_{\rho} |^2 d\rho     dt +
c 
    \int_{I_T} \rho^{ 2 -\delta -2p\frac{N-1}{N}   }   ( \phi^{p}  - U^{p} )_+^2
    d\rho    dt 
     \\ [4mm]
  & \leq   & \displaystyle   \   \frac{1}{2}     \int_{I_T}   \rho^{2 -\delta   }   | (w_+)_{\rho} |^2 d\rho     dt +
 c
      \int_{I_T} \rho^{ 2 -\delta  -2p\frac{N-1}{N}    } \phi^{2p-2} w_+^2
    d\rho    dt  .
    \end{array}$$
    Notice that the last inequality is obtained by Mean Value theorem since $U \leq \phi$.
     In view of  $$ \phi^{2p-2}   \leq 
 c \frac{\rho^{2p-2}}{a^{\frac{2(p-1)}{\gamma}}(t)}$$ we have that 
    $$
\begin{array}{lll}      \displaystyle 
      \int_{I_T}  \rho^{ 2 -\delta -2p\frac{N-1}{N}   } \phi^{2p-2} w_+^2
    d\rho      dt  & \leq  &  \displaystyle c    \int_{I_T}  a^{- \frac{2(p-1)}{\gamma}}(t)    \rho^{ 2 -\delta -2p\frac{N-1}{N}  +2p-2  } w_+^2
    d\rho     dt 
    \\ [4mm] & = &  \displaystyle 
    c    \int_{I_T}  a^{- \frac{2(p-1)}{\gamma}} (t)   \rho^{  -\delta + \frac{2p}{N}   } w_+^2
    d\rho    dt .
    \end{array}
    $$
 Then, we have 
 \begin{equation} \label{eq3}
 I_{2a} \leq  \frac{1}{2}    \int_{I_T} \rho^{2 -\delta   }    | (w_+)_{\rho} |^2 d\rho     dt +c
     \int_{I_T} a^{- \frac{2(p-1)}{\gamma}}(t)     \rho^{  -\delta + \frac{2}{N}   } w_+^2 d \rho    dt .
 \end{equation}
  We proceed with $I_{2b}$ in the following way 
  $$
\begin{array}{lll}      \displaystyle 
I_{2b}  & = & \displaystyle \left(\delta-(2-p)\frac{N-1}{N} - \frac{2}{N}    \right)\frac{\chi_{_N}}{p}
    \int_{I_T}  \rho^{ (2 -p)\frac{N-1}{N} +\frac{2}{N} -\delta -1  }   (\phi^{p}  - U^{p} )_+ w_+
    d\rho     dt   \\  [4mm]
    &\leq  &  \displaystyle  c     \int_{I_T}  \rho^{ (2 -p)\frac{N-1}{N} +\frac{2}{N} -\delta -1  }   \phi^{p-1}  w^2_+
    d\rho    dt 
    \\  [4mm]
    &\leq  &  \displaystyle c    \int_{I_T}  a^{1-p}(t)  \rho^{ (2 -p)\frac{N-1}{N} +\frac{2}{N} -\delta -1   +\gamma(p-1) }    w_+^2
    d\rho    dt  \end{array} $$
    since $$(2 -p)\frac{N-1}{N} +\frac{2}{N} -\delta -1   +\gamma(p-1) 
    =(p-1)( \gamma - \frac{N-1}{N} ) +\frac{1}{N} -\delta
    \geq - \delta + \frac{1}{N}
    $$
it results 
 $$I_{2b} \leq      \int_{I_T}  c a^{1-p} (t) \rho^{ \frac{1}{N} -\delta    }   w_+^2
    d\rho     dt  .  $$
  Then, thanks to Lemma \ref{lemma4.3},  
    we have that 
   $$I_{2b} \leq     \int_{I_T} \rho_0^{\frac{1}{N}}ca^{1-p}(t)    \rho^{   -\delta    }   w_+^2
    d\rho     dt +     \int_{I_T}   ca^{1-p} (t) \rho_0^{\frac{1}{N}-2}\rho^{2-\delta    }   w_+^2
    d\rho     dt 
    $$
     where   $$ \rho_0^{\frac{1}{N} } \leq  \frac{(2-\delta)(\delta-1)a^{(p-1)}(t) }{8 c}, 
   \quad \mbox{ for } t\leq T.
    $$  
   Then,  
 \begin{equation} \label{I2} \begin{array}{lll}   \displaystyle 
 I_2   \leq \frac{1}{2}     \int_{I_T} \rho^{2 -\delta   }  (w_+)_{\rho}^2 d\rho     dt 
& + & \displaystyle        \int_{0}^T c^{\prime} a^{2N( 1-p)  }  (t) \int_{I} \rho^{\frac{2}{N} - \delta }  w_+^2d\rho    dt  \\ [4mm]  & +  & \displaystyle   
 \frac{(2-\delta)(\delta-1)}{8 }    \int_{I_T}  \rho^{   -\delta    }   w_+^2
    d\rho     dt  .  \end{array}
 \end{equation}
We consider now 
 $$I_3=M
  \chi_{_N}      \int_{I_T}   \rho^{(2-p) \frac{N-1}{N}  + \frac{2}{N}- \delta }   \left(   \left|  {\phi} \right|^{p-2}  \phi-   \left|  {U} \right|^{p-2}  U\right) w_+ d\rho    dt .
  $$
  Thanks to Lemma \ref{lemma4.1},  
     $$
    \int_{I_T}  \rho^{(2-p) \frac{N-1}{N}+ \frac{2}{N} -\delta  }  
     ( |\phi |^{p-1}  - |U|^{p-1} )_+ w_+  d\rho    dt 
     \leq   c     \int_{I_T} \rho^{(2-p) \frac{N-1}{N}+ \frac{2}{N} -\delta  }  
     |\phi |^{p-2}    w_+^2  d\rho    dt 
  $$
  and 
  $$ \phi^{p-2} \leq c \rho^{ 
\gamma(p-2)} (1-\rho)^{p-2}  $$
we have 
$$I_{3} \leq c  \int_I  \rho^{(2-p) \frac{N-1}{N}+ \frac{2}{N} -\delta  +
\gamma(p-2)} (1-\rho)^{p-2}  w_+^2  d\rho    dt .
  $$
  Since $$\gamma \leq \frac{N+1}{N}, \quad \mbox{ and } \    p<2.  $$ we have that 
\begin{equation} \label{789}  (2-p) (\frac{N-1}{N}- \gamma) +  \frac{2}{N} -\delta 
\geq \frac{p-1}{N} - \delta,  \end{equation} 
    and therefore
    $$I_{3} \leq c      \int_{I_T}  \rho^{  \frac{p-1}{N}  -\delta  } (1-\rho)^{p-2}  w_+^2  d\rho    dt .
  $$ 
    Notice that the previous integral presents two possible singularities, at $\rho=0$ and at $\rho=1$. To treat them,   we first consider the positive number  $\rho_0^{\prime}<\frac{1}{2}$ small enough such that 
    $$  c_1 (1- \rho_0^{\prime})^{ \delta-2} \frac{( \rho_0^{\prime})^{p}}{p-1} <\frac{1}{4}$$
  and   we  split  
  the previous integral into there  parts in the following way 
  $$
  \begin{array}{lll} 
  I_{3} &  \leq  & \displaystyle c     \int_{0}^{T} \int_{0}^{\frac{1}{2} }  \rho^{  \frac{p-1}{N}  -\delta  } (1-\rho)^{p-2}  w_+^2  d\rho    dt  \\
[4mm]  & & \displaystyle
 +  c   \int_0^{T} \int_{\frac{1}{2}}^{1- \rho_0^{\prime}} 
  \rho^{  \frac{p-1}{N}  -\delta  } (1-\rho)^{p-2}  w_+^2  d\rho     dt  \\ [4mm]  & & \displaystyle
+ c   \int_0^{T} \int_{1- \rho_0^{\prime}}^{1} 
   \rho^{  \frac{p-1}{N}  -\delta  } (1-\rho)^{p-2}  w_+^2  d\rho      dt .
\end{array}
  $$
  Now we denote the previous integrals  by $I_{3a}$, $I_{3b}$  and $I_{3c}$    respectively,  i.e. 
 $$\begin{array}{lll}
  I_{3a} & := & \displaystyle c \int_0^T   \int_{0}^{\frac{1}{2}}   \rho^{  \frac{p-1}{N}  -\delta  } (1-\rho)^{p-2}  w_+^2  d\rho    dt 
   ,  \\ [4mm]
  I_{3b} & :=  & \displaystyle   c  \int_0^T  \int_{\frac{1}{2}}^{1- \rho_0^{\prime}} 
   \rho^{  \frac{p-1}{N}  -\delta  } (1-\rho)^{p-2}  w_+^2  d\rho    dt 
 ,  \\ [4mm]
  I_{3c} & :=  & \displaystyle   c    \int_{0}^T  \int_{1- \rho_0^{\prime}}^{1} 
   \rho^{  \frac{p-1}{N}  -\delta  } (1-\rho)^{p-2}  w_+^2  d\rho    dt 
  \end{array}
      $$
to write 
 $$I_3= I_{3a} + I_{3b} +I_{3c} .$$
 We consider  first $I_{3a}$,  which satisfies 
$$ \begin{array}{lll} I_{3a}    & \leq  & \displaystyle   c   \int_0^{T}  \int_{0}^{\frac{1}{2}}  \rho^{   \frac{p-1}{N} -\delta 
}   w_+^2  d\rho      dt 
\\ [ 4mm] & \leq  &  \displaystyle c     \int_{I_T}  \rho^{   \frac{p-1}{N} -\delta 
}   w_+^2  d\rho      dt 
.
\end{array}
$$ We now apply  Lemma \ref{lemma4.3},  for $\epsilon_1= \frac{p-1}{N}$ and   $\kappa= \frac{2}{N}$,  and $\rho$ such that $\rho_0^{\frac{p-1}{N}}< \frac{(2- \delta)(\delta-1) }{8} $    to obtain  
\begin{equation} \label{678}
I_{3a}    \leq      \frac{(2-\delta )(\delta -1)}{8}
    \int_{I_T}  \rho^{   -\delta 
}   w_+^2  d\rho     dt  +  c 
    \int_{I_T}   \rho^{  \frac{2}{N} -\delta 
}   w_+^2  d\rho    dt .
\end{equation} 
In  similar way we have that 
\begin{equation} \label{nnn} I_{3b} \leq c   \int_0^{T} \int_{\frac{1}{2}}^{1- \rho_0^{\prime}}  \rho^{ \frac{p-1}{N}- \delta }    w^2_+ d\rho    dt 
 \leq    2^{\frac{3-p}{N} } c     \int_{I_T}   \rho^{ \frac{2}{N}- \delta }    w^2_+ d\rho    dt 
 .
 \end{equation}
 Now,  we consider  $I_{3c}$,  
 then
  $$
  \begin{array}{lll}
  I_{3c}  &  \leq  & \displaystyle  c  \int_0^T  \int_{1- \rho_0^{\prime}}^1  \rho^{ \frac{p-1}{N}- \delta }  (1- \rho)^{p-2}  w^2_+ d\rho     dt  \\ [4mm] 
 &  \leq  & \displaystyle    (1- \rho_0^{\prime})^{\frac{p-1}{N}- \delta}  c  \int_0^T \int_{1- \rho_0^{\prime}}^1   (1- \rho)^{p-2}  w^2_+ d\rho     dt  \\ [4mm]
  &  \leq  & \displaystyle   c_1   \int_0^T \int_{1- \rho_0^{\prime}}^1  (1- \rho)^{p-2}  \| w_+\|_{L^{\infty}(1- \rho_0^{\prime}  ,1)}^2 d \rho      dt .    \end{array}
$$ After integration in the term  $$  \int_{1- \rho_0^{\prime}}^1  (1- \rho)^{p-2}d \rho$$ we have 
$$I_{3c} \leq  
c_1    \frac{ (\rho_0^{\prime})^{p-1}}{p-1}   \int_0^{T}   \| w_+\|_{L^{\infty}(1- \rho_0^{\prime}  ,1)}^2    dt .
$$
Thanks to  the embedding  $L^{\infty}(I) \subset H^1(I)$ for any bounded one-dimensional interval $I$,  we have, in view of $w=0$ at $\rho=1$,  that   $$ \| w_+\|_{L^{\infty}(1- \rho_0^{\prime}  ,1)}^2
\leq  \rho_0^{\prime} \int_{1- \rho_0^{\prime}}^1     | (w_+)_{\rho} |^2 d\rho.$$
Then
 $$
  \begin{array}{lll}
  I_{3c}  
 &  \leq  & \displaystyle   c_1  \frac{ (\rho_0^{\prime})^{p}}{p-1}     \int_{0}^T  \int_{1- \rho_0^{\prime}}^1  | (w_+)_{\rho} |^2d\rho    dt  \\  [4mm] 
 &  \leq  & \displaystyle  c_1 (1- \rho_0^{\prime} )^{ \delta-2} \frac{ (\rho_0^{\prime})^{p}}{p-1}  
     \int_{I_T}  \rho^{2- \delta}     | (w_+)_{\rho} |^2 d\rho     dt 
 \end{array}
$$ 
 i.e. $$
 I_{3c} \leq c_1 (1- \rho_0^{\prime})^{ \delta-2} \frac{( \rho_0^{\prime})^{p}}{p-1}      \int_{I_T} \rho^{2- \delta}    | (w_+)_{\rho} |^2 d\rho dt $$
we take $\rho_o^{\prime}$ small enough such that 
$$  c_1 (1- \rho_0^{\prime})^{ \delta-2} \frac{( \rho_0^{\prime})^{p}}{p-1} <\frac{1}{4}$$
which implies, \begin{equation} \label{678c} 
 I_{3c} \leq \frac{1}{4}      \int_{I_T}  \rho^{2- \delta}    | (w_+)_{\rho} |^2 d\rho dt.  \end{equation} 
 thanks to (\ref{678}), (\ref{nnn}) and (\ref{678c})   
\begin{equation} \label{I3}
I_3 \leq   \frac{(2- \delta)(\delta-1)}{8}       \int_{I_T}  \rho^{   -\delta 
}   w_+^2  d\rho    dt  + c     \int_{I_T} \rho^{ \frac{2}{N}- \delta }    w^2_+ d\rho    dt  + \frac{1}{4}       \int_{I_T}   \rho^{2- \delta}    | (w_+)_{\rho} |^2 d\rho     dt .
 \end{equation} 
   \newline 
We replace  (\ref{I1}), (\ref{I2}) and (\ref{I3}) into (\ref{eq1}) to obtain 
  $$ \frac{1}{2} \int_{I} \rho^{\frac{2}{N} - \delta }  w_+^2d\rho \Big|_T  + \frac{1}{4}
    \int_{I_T} \rho^{2 -\delta   }   | (w_+)_{\rho} |^2 d\rho     dt 
  \leq      \int_{I_T}   c( a(t))   \rho^{\frac{2}{N} - \delta }  w_+^2d\rho    dt
  .
  $$
  Notice that,  the terms  $$  
 \frac{(\delta-2) (\delta -1)}{8} 
  \int_{I_T}\rho^{ - \delta }  w_+^2d\rho  dt
  $$
  in $I_2$ and $I_3$ are cancelled with the term in $I_1$ 
  $$ -\frac{(2-\delta)(\delta -1)}{2}     \int_{I_T}  \rho^{ - \delta }  w_+^2d\rho 
  dt 
  $$
thanks to (\ref{delta}). 
 Now,    Gronwall$^{\prime}$s lemma provides us 
  $$     \int_{I_T}  \rho^{\frac{2}{N} - \delta }  w_+^2d\rho  dt =0$$ which contradicts (\ref{contradiction}) and the proof ends   
   for $t <T_{bu}.$  \qed
       \end{proof}

  \begin{lemma} \label{lemma4.5}  Let $p$ and $M$ be positive numbers satisfying   (\ref{p})  and (\ref{M}) respectively.  Then,  the problem 
   (\ref{eqU}) has  at  most one solution under assumptions 
  \begin{equation} \label{datoini2} 
  U(0, \rho) \geq  \phi(0, \rho),  
  \end{equation}
\begin{equation} U  \leq   A  \rho^{  \frac{N-1}{N}}, 
\label{B} \end{equation}  for a positive constant $A$ large enough. 
  \end{lemma}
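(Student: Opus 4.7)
The plan is to mimic the comparison proof of Lemma~\ref{lemma4.4} almost step by step. Let $U_1$ and $U_2$ be two weak solutions to (\ref{eqU})--(\ref{iniU}) with the same initial data, both satisfying (\ref{datoini2}) and (\ref{B}), and set $w:=U_1-U_2$. Lemma~\ref{lemma4.4} provides the two-sided bound $0<\phi\leq U_i\leq A\rho^{(N-1)/N}$ in $(0,T_{bu})\times I$, which supplies precisely the two pointwise estimates needed for the reaction terms: on $\{w_+>0\}$ the Mean Value Theorem applied to $s\mapsto s^{p-1}$ produces an intermediate $\xi\in[U_2,U_1]$ with $\xi\geq U_2\geq \phi$, so (since $p-2<0$)
\begin{equation*}
0\leq U_1^{p-1}-U_2^{p-1}\leq (p-1)\phi^{p-2}w_+,
\end{equation*}
while applied to $s\mapsto s^p$, combined with $\xi\leq U_1\leq A\rho^{(N-1)/N}$ from (\ref{B}), it yields
\begin{equation*}
|U_1^p-U_2^p|\leq pA^{p-1}\rho^{(p-1)(N-1)/N}w_+.
\end{equation*}

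I would then subtract the two PDEs, regroup the reaction term as
\begin{equation*}
\chi_N\rho^{(2-p)(N-1)/N}\Big[\tfrac{1}{p}(U_1^p-U_2^p)_\rho+M(U_1^{p-1}-U_2^{p-1})\Big],
\end{equation*}
and test against $\rho^{2/N-\delta}w_+$ with some $\delta\in(1,\gamma)$, reproducing the $I_1$--$I_3$ decomposition of Lemma~\ref{lemma4.4}. Boundary terms vanish at $\rho=0$ by (\ref{B}) together with $\delta<\gamma<(N+1)/N$, and at $\rho=1$ by $U_i(\cdot,1)=0$. The diffusive integral $I_1$ is identical. For the gradient chemotactic integral $I_2$, a further integration by parts transfers the $\rho$-derivative onto $w_+$; Young's inequality combined with (\ref{B}) (used in place of the subsolution bound $\phi\leq c\rho^\gamma$) produces a $\tfrac{1}{2}\int\rho^{2-\delta}|(w_+)_\rho|^2$ absorption term plus a Gronwall source $c\int\rho^{2/N-\delta}w_+^2$, the crucial exponent cancellation being $(2-p)(N-1)/N+(p-1)(N-1)/N=(N-1)/N$, which collapses the weights exactly onto $2/N-\delta$; pleasantly, no factor $a(t)^{1-p}$ survives here. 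For $I_3$, the first MVT estimate above, combined with the pointwise bounds $\phi^{p-2}\leq c\rho^{\gamma(p-2)}$ near $\rho=0$ and $\phi^{p-2}\leq c(1-\rho)^{p-2}$ near $\rho=1$ (both uniform in $t$ thanks to the explicit form of $\phi$ and $a(t)\in(0,1]$), reduces matters to the same three-subinterval decomposition $(0,\tfrac12)\cup(\tfrac12,1-\rho_0')\cup(1-\rho_0',1)$ used in Lemma~\ref{lemma4.4}, absorbed into $I_{1a}$ and $I_{1b}$ via Lemma~\ref{lemma4.3} and the one-dimensional embedding $L^\infty\subset H^1$. Applying Gronwall to $\int_I\rho^{2/N-\delta}w_+^2\,d\rho$ then forces $w_+\equiv 0$; exchanging the roles of $U_1$ and $U_2$ yields $w_-\equiv 0$, hence $U_1\equiv U_2$.

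The main obstacle I foresee is the $I_3$ estimate near $\rho=1$, where the ceiling (\ref{B}) supplies no improvement over what the $H^1$-trace of $w_+$ already delivers, so the $\rho_0'$-smallness argument from Lemma~\ref{lemma4.4} -- choosing $\rho_0'$ such that $c_1(1-\rho_0')^{\delta-2}(\rho_0')^{p}/(p-1)<1/4$ -- must be replayed with uniformity in $t$. Apart from this, the weaker bound (\ref{B}) turns out to be precisely sufficient: in every other weight balance, the exponent $(N-1)/N$ appearing in (\ref{B}) couples with the PDE weight $(2-p)(N-1)/N$ to produce exactly the test-function weight $2/N-\delta$, so replacing the sharper subsolution bound $\phi\leq c\rho^\gamma$ of Lemma~\ref{lemma4.4} with (\ref{B}) costs nothing.
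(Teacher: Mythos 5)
Your proposal follows essentially the same route as the paper: subtract the two equations, test against $\rho^{2/N-\delta}w_+$ with $\delta\in(1,\gamma)$, use the ceiling (\ref{B}) via the Mean Value Theorem for the gradient term $I_2$ (with exactly the exponent cancellation $(2-p)\frac{N-1}{N}+(p-1)\frac{N-1}{N}=\frac{N-1}{N}$ you identify), use the lower bound $U_i\geq\phi$ from the comparison lemma to control $U_1^{p-1}-U_2^{p-1}$ by $(p-1)\phi^{p-2}w_+$ in $I_3$, and conclude by Gronwall. The decomposition, the absorption of the singular weights, and the treatment of the endpoint singularities all match the paper's argument, so the proposal is correct and not a genuinely different proof.
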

  \begin{proof}
  We proceed by contradiction, and assume that there exists two different solutions  of (\ref{eqU}) with the same initial data 
  $U_1$ and $U_2$. We define the function 
  $$w=U_1- U_2$$
  which satisfies,    in view of positivity of $U_1$ and $U_2$
 \begin{equation} \label{mas} 
 w_t  -  \rho^{\frac{2N-2}{N} } {w}_{\rho \rho }= 
 \frac{ \chi_{_N} }{p}   \rho^{(2-p) \frac{N-1}{N}   }   
  \left(  (U_1^{p})_{\rho} - (U_2^{p})_{\rho}   \right) 
  + \frac{M
  \chi_{_N}}{p}   \rho^{(2-p) \frac{N-1}{N}  }    \left(   U_1^{p-1}   -   U_2^{p-1}   \right).
  \end{equation}
 Notice that, since $ p \geq \frac{N}{N-1}$  we have that    $$(2-p) \frac{N-1}{N}+ \frac{2}{N} < 1.$$
 We
   multiply by $\rho^{\frac{2}{N}- \delta} w_+$ for some $\delta>0$ satisfying 
  \begin{equation} \label{delta2} \delta  \in (1,\gamma) 
\end{equation}
for $\gamma< \gamma_0$ and $\gamma_0$ defined in  (\ref{gamma0}). Then, 
we have,  after integration  over $I_T$ 
 \begin{equation} \label{eq1w}  \begin{array}{ll} \displaystyle 
  \left. \frac{1}{2} \int_I w_+^2 \rho^{\frac{2}{N} - \delta } d\rho \right|_T \   \leq   \ &  \displaystyle  \int_{I_T}  \rho^{2- \delta}    {w}_{\rho \rho } w_+ d\rho dt
 \\ [4mm] & \displaystyle 
 + \frac{ \chi_{_N} }{p}  \int_{I_T} \rho^{(2-p) \frac{N-1}{N}  +\frac{2}{N} -\delta }  
  \left( (  U_1^{p})_{\rho} - (U_2^{p})_{\rho})   \right) w_+ d\rho dt \\ [4mm] & \displaystyle 
  +  M
  \chi_{_N} \int_{I_T}  \rho^{(2-p) \frac{N-1}{N}  + \frac{2}{N}- \delta }   \left(   U_1 ^{p-1}  -    U_2  ^{p-1}   \right) w_+ d\rho dt.
\end{array}
  \end{equation} 
 We label  the integrals in the previous equation using the same notation   as in the previous lemma, 
  $$\begin{array}{lll}
  I_1 & := & \displaystyle \int_{I_T} \rho^{ 2- \delta }  {w}_{\rho \rho } w_+ d\rho dt,  \\ [4mm]
  I_2 & :=  & \displaystyle \frac{ \chi_{_N} }{p} \int_{I_T}  \rho^{(2-p) \frac{N-1}{N}  +\frac{2}{N} -\delta }  
  \left( (  U_1^{p})_{\rho} - (U_2^{p})_{\rho})   \right) w_+ d\rho dt,
  \\  [4mm] I_3 &  := & \displaystyle 
   M
  \chi_{_N} \int_{I_T}  \rho^{(2-p) \frac{N-1}{N}  + \frac{2}{N}- \delta }   \left(   U_1 ^{p-1}  -    U_2  ^{p-1}   \right) w_+ d\rho dt.
  \end{array}
      $$
We proceed as in Lemma \ref{lemma4.4} to obtain, thanks to
$$\lim_{\rho \rightarrow 0^+}   \rho^{2- \delta} w_+ (w_+)_{\rho} 
= 
\lim_{\rho \rightarrow 1^-}   \rho^{2- \delta} w_+ (w_+)_{\rho} =0.
$$ which is obtained  as a consequence of 
$$|U|\leq A\rho^{\frac{N-1}{N}}, \quad  \delta <1+ \frac{1}{N} $$   therefore the terms in the boundary are null in the following integrals 
\begin{equation} \label{mas1} 
I_1 = -\int_{I_T}  \rho^{2- \delta }  |{w_+}_{\rho }|^2d\rho +   \frac{(2- \delta)(1-\delta ) }{2} \int_I  \rho^{-\delta }  (w_+)^2  d\rho dt.
   \end{equation}

  The term 
  $I_{2}$ is treated  in the following way.
  First,  we apply Mean Value Theorem to the term $U_1^p-U_2^p$ to obtain, thanks to assumption (\ref{B}), that 
   $$
     (U_1^p-U_2^p )_+\leq (p-1)A^{p-1} \rho^{\frac{(p-1)(N-1)}{N}}(U_1- U_2)_+, $$ then  
 $$\begin{array}{lll}I_{2a} & := & \displaystyle \frac{\chi_{_N}}{p} \int_{I_T}  \rho^{(2-p) \frac{N-1}{N}+ \frac{2}{N} -\delta }  
   ( U_1^{p}  - U_2^{p} )_+   (w_+)_{\rho}  d\rho   dt\\ [4mm] 
    & \leq &  \displaystyle c  \int_{I_T}  \rho^{(2-p) \frac{N-1}{N}+ \frac{2}{N} -\delta  }  
    | A \rho^{\frac{N-1}{N}} |^{p-1}  w_+  |(w_+)_{\rho}|   d\rho  dt 
  \end{array}
  $$
    which implies  
      $$\begin{array}{lll}I_{2a} & \leq & \displaystyle 
 c_p A^{p-1}  \int_{I_T}  \rho^{(2-p) \frac{N-1}{N}+ \frac{2}{N} -\delta  + (p-1) \frac{N-1}{N} }  
   w_+  |(w_+)_{\rho}|   d\rho dt
 \\ [4mm] & \leq & \displaystyle 
 c_p A^{p-1}  \int_{I_T}  \rho^{1+ \frac{1}{N} -\delta  }  
   w_+  |(w_+)_{\rho}|   d\rho dt
 \\ [4mm] 
    & \leq & \displaystyle  \epsilon_0 \int_{I_T}  \rho^{2  -\delta }  
   |(w_+)_{\rho}|^2   d\rho dt
    +  \frac{c_p A^{2p-2} }{4 \epsilon_0}  \int_{I_T}  \rho^{-\delta  +\frac{2}{N}  }  
    w_+^2    d\rho dt.
    \end{array}
   $$ 
i.e.
   $$ I_{2a} \leq  \epsilon_0 \int_{I_T}  \rho^{2  -\delta }  
   |(w_+)_{\rho}|^2   d\rho dt
    +  \frac{c_p A^{2p-2} }{4 \epsilon_0}  \int_{I_T}  \rho^{-\delta  +\frac{2}{N}  }  
    w_+^2    d\rho dt. $$

$I_{2b}$  has the following expression 
  $$I_{2b}=       -((2-p) \frac{N-1}{N}+ \frac{2}{N} -\delta )\frac{\chi_{_N}}{p}
 \int_{I_T}  \rho^{(2-p) \frac{N-1}{N}+ \frac{2}{N} -\delta  }  
   (U_1^{p}  - U_2^{p} )_+   w_+  d\rho dt.
      $$
      Thanks to Mean Value theorem we
      get 
      $$I_{2b}=   c  \int_{I_T}  \rho^{(2-p) \frac{N-1}{N}+ \frac{2}{N} -\delta  }  |\xi^{p-1}|
     w_+^2  d\rho  dt
      $$
      for some $\xi < A\rho^{ \frac{N-1}{N}} $ with $A$ large enough (see assumption (\ref{B})). 
Then, 
$$ \begin{array}{lll}
I_{2b} & \leq & \displaystyle  c A^{p-1}   \int_{I_T} \rho^{ (2-p) \frac{N-1}{N}+ \frac{2}{N} -\delta  +(p-1) \frac{N-1}{N} }  
   w_+^2   d\rho  dt
  \\ [4mm]  & =& \displaystyle
c  A^{p-1}  \int_{I_T} \rho^{1+ \frac{1}{N}  -\delta    }  
   w_+^2  d\rho  dt
    \\   [4mm]  & \leq & \displaystyle
 c   A^{p-1}   \int_{I_T}  \rho^{\frac{2}{N}  -\delta    }  
   w_+^2  d\rho dt
  \end{array}
   $$ 
     and we get 
  \begin{equation} \label{mas2}  
  I_2 \leq     \frac{1}{2} \int_{I_T}  \rho^{2  -\delta }  
   |(w_+)_{\rho}|^2   d\rho
    +  \left( \frac{c_p A^{2p-2} }{2} + c A^{p-1} \right)   \int_{I_T}  \rho^{-\delta  +\frac{2}{N}  }  
    w_+^2    d\rho.
  \end{equation}
  Thanks to Mean Value Theorem,   $I_3$  satisfies  
 $$\begin{array}{lll}
  I_3 & = & \displaystyle 
   M
  \chi_{_N} \int_{I_T}  \rho^{(2-p) \frac{N-1}{N}  + \frac{2}{N}- \delta }   \left(   U_1 ^{p-1}  -    U_2  ^{p-1}   \right) w_+ d\rho dt
  \\ [4mm] 
    & \leq & \displaystyle 
      (p-1) M
  \chi_{_N} \int_{I_T}  \rho^{(2-p) \frac{N-1}{N}  + \frac{2}{N}- \delta }   \left| \phi      \right|^{p-2}  w_+^2  d\rho dt
   \\ [4mm] 
    & \leq & \displaystyle 
      (p-1) M
  \chi_{_N} \int_{I_T}  \rho^{(2-p) \frac{N-1}{N}  + \frac{2}{N}- \delta + (p-2) \gamma  } (1- \rho)^{p-2}    w_+^2  d\rho dt.
    \end{array}
    $$
    We now proceed as in Lemma \ref{lemma4.4} and it results  
\begin{equation} \label{mas3}  I_3 \leq c \rho_0^{\frac{p}{N}}   \int_{I_T} \rho^{- \delta }  w_+^2d\rho dt+  c    \int_{I_T} \rho^{ \frac{2}{N}- \delta }  w_+^2d\rho dt , 
\end{equation} 
for $\rho_0$ such that $\rho_0^{\frac{p}{N}} \leq \frac{(2- \delta)(\delta -1) }{8 c }$.  
   Then, we replace  (\ref{mas1}), (\ref{mas2}) and (\ref{mas3}) into  (\ref{mas}) to obtain 
  $$ \left.  \int_I \rho^{ \frac{2}{N}- \delta }  w_+^2d\rho\right|_{T}  \leq c   \int_{I_T} \rho^{ \frac{2}{N}- \delta }  w_+^2d\rho dt , $$
 and   Gronwall Lemma ends the proof. 
     \qed
  \end{proof} 
    
    \begin{lemma} \label{lemma4.62}  Let $p$ and $M$ be positive numbers satisfying   (\ref{p})  and (\ref{M}) respectively.  Then,  
     the solution $U$ to (\ref{eqU})-(\ref{iniU}),  is a classical solution in $(0,T)$ (for $T<T_{bu}$)  in the sense 
 \begin{equation} \label{regul2} U \in C^{1, 2}_{loc}(I_{T}) \cap C^{\alpha}(I_{T}) ,  \quad \mbox{ for } \alpha = 1-  \frac{N+3}{q} \end{equation}
 and $q \frac{N+2}{2-p} $,   where  $U_0$ is defined in  (\ref{iniU}) for  $u_0$ satisfying  (\ref{Hdato1}), (\ref{id23}) and (\ref{Hdato2}). 
   \end{lemma}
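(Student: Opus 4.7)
The strategy is to upgrade the weak solution $U$ to a classical one by exploiting the fact that, once $\rho$ is kept in a compact subinterval $K \Subset I = (0,1)$, the weights $\rho^{(2N-2)/N}$ and $\rho^{(2-p)(N-1)/N}$ appearing in equation (\ref{eqU}) are smooth and bounded away from zero and infinity, so the equation is uniformly parabolic with a quasilinear structure whose nonlinearity is only of order one in $U$ and $U_\rho$. The plan is therefore to first obtain sufficient integrability of the right-hand side on $K$ using the estimates already collected in Section \ref{s2.1}, then to invoke $L^q$-parabolic theory, and finally to bootstrap to $C^{1,2}_{loc}$.

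Concretely, I would first use the pointwise bound $|U(t,\rho)| \leq c\rho^{1/2} C(T)$ together with $U_\rho \in L^2((0,T); L^2(I))$ obtained in Lemma in Section \ref{s2.1} to show that the inhomogeneity
$$F(t,\rho) := \chi_{_N}\,\rho^{(2-p)\frac{N-1}{N}}\bigl(U_\rho + M\bigr)|U|^{p-2}U$$
lies in $L^q(K \times (0,T))$ for any $q < \tfrac{N+2}{2-p}$, the limiting exponent coming from the singular factor $|U|^{p-2}$ near points where $U$ might vanish (the bound $|U|\le c\rho^{1/2}$ gives a concrete $\rho^{(p-1)/2}$ control of $|U|^{p-2}U$, and $U_\rho \in L^2$ converts the remaining polynomial growth into integrability through Hölder's inequality). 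Next, since on $K$ the linear operator $\partial_t - \rho^{(2N-2)/N}\partial_{\rho\rho}$ is uniformly parabolic with smooth coefficients, the standard interior $L^q$-theory (Ladyzhenskaya--Solonnikov--Ural'tseva, Chapter IV) yields $U \in W^{2,1}_q(K')$ for any $K' \Subset K$.

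The Sobolev embedding $W^{2,1}_q \hookrightarrow C^{1+\alpha,(1+\alpha)/2}$ then delivers $U_\rho \in C^{\alpha,\alpha/2}_{loc}$ with the Hölder exponent $\alpha = 1 - \frac{N+3}{q}$, whereupon the right-hand side $F$ itself becomes locally Hölder continuous. Classical Schauder estimates for linear parabolic equations (again applicable because of uniform parabolicity on $K'$) then give $U \in C^{2+\alpha,1+\alpha/2}_{loc}$, so that $U$ satisfies (\ref{eqU}) classically in the interior. The global Hölder regularity $U\in C^\alpha(I_T)$ is recovered by combining the interior Hölder bound with the boundary values $U(t,0)=U(t,1)=0$ and the estimate $|U(t,\rho)|\le c\rho^{1/2}C(T)$, which already furnish Hölder continuity up to $\rho=0$, and by the regularity of $u_0$ prescribed in (\ref{id23}) near $\rho=1$.

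The main obstacle is handling the singular nonlinearity $|U|^{p-2}U$: since $p<2$, the factor $|U|^{p-2}$ blows up at zeros of $U$, so one cannot simply close the argument with a direct Hölder bound on $F$. The key to circumventing this is the precise decay estimate $|U(t,\rho)| \leq c\rho^{1/2}$ from Section \ref{s2.1}, which allows the singular factor to be absorbed into a $\rho$-power near the origin; together with the Hardy-type inequalities of Lemmas~\ref{lemma4.2}--\ref{cor4.3}, this controls the singular contribution and is what forces the exponent constraint $q < (N+2)/(2-p)$ that appears in the statement.
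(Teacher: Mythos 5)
Your proposal has a genuine gap, and it stems from misidentifying where the difficulty in this lemma actually lives. You locate the obstruction in the factor $|U|^{p-2}$ ``near points where $U$ might vanish'' and claim this is what forces $q<\frac{N+2}{2-p}$. But the nonlinearity always appears as $|U|^{p-2}U=|U|^{p-1}\,\mathrm{sgn}\,U$ with $p-1\in(0,1)$, which is bounded and even H\"older continuous as a function of $U$; zeros of $U$ cause no singularity at all (they only reduce the H\"older exponent in the bootstrap, which the paper records as $\beta=(p-1)\alpha$). The true difficulty is at $\rho=0$: there the principal part $\rho^{\frac{2N-2}{N}}\partial_{\rho\rho}$ degenerates, so your restriction to compact subsets $K\Subset(0,1)$ simply sidesteps the point where the work is needed. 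On such $K$ the equation is uniformly parabolic with smooth bounded weights and no constraint of the form $q<\frac{N+2}{2-p}$ arises, so your mechanism does not actually produce the exponent in the statement. The paper's proof handles the origin by setting $W(t,s)=s^{-N}U(t,s^N)$ and viewing $W$ as a radial function $\tilde W$ on the $(N+2)$-dimensional ball, where the operator becomes the genuine (uniformly parabolic) Laplacian $N^{-2}\Delta_{N+2}$ and the right-hand side satisfies $|f|\le c|x|^{p-2}$; the constraint $q<\frac{N+2}{2-p}$ is precisely the condition $\int_{B_{N+2}}|x|^{q(p-2)}dx<\infty$, i.e.\ it measures the integrability of the singular weight $s^{p-2}$ at the origin in dimension $N+2$, not anything about zeros of $U$. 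Maximal $L^q$ regularity plus Sobolev embedding (using $\frac{N+2}{2-p}>N+3$) then gives the global $C^{0,\alpha}$ bound, after which interior Schauder theory on $(\epsilon,1)$ yields $C^{1,2}_{loc}$, much as in your final bootstrap step.

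A secondary but related gap: the conclusion $U\in C^{\alpha}(I_T)$ is a statement up to $\rho=0$ in both space and time. Your patch via $|U(t,\rho)|\le c\rho^{1/2}$ gives spatial H\"older continuity only at the single point $\rho=0$ and says nothing about $|U(t_1,\rho)-U(t_2,\rho)|$ for small $\rho$; gluing this to interior estimates whose constants may degenerate as $K\to\{0\}$ is not carried out and is exactly what the $(N+2)$-dimensional reformulation is designed to avoid. Your interior argument is sound as far as it goes and matches the paper's second step in spirit, but without the change of variables (or an equivalent degenerate-parabolic theory at $\rho=0$) the global H\"older assertion with the stated exponent is not established.
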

  \begin{proof}
  As in Section \ref{s2.1}  we introduce the function $W$,  
   $$ W(t,s)= s^{-N}U(t, s^N) $$
   which satisfies 
   (\ref{eqU22}),   and $\tilde{W}: (0,T) \times B_{N+2} \rightarrow \R$,  defined by $\tilde{W}(t,x)=W(t, |x|)$ which satisfies 
   $$\tilde{W}_t - N^{-2} \Delta_{N+2} \tilde{W}+ b(t,x) \nabla \tilde{W} = f $$
   where 
   $$ b:=  \chi _{_N}    |x|^{p-2   }  \frac{x}{N}  \left| \tilde{W} \right|^{p-2}
 \tilde{W}, 
   \qquad 
   f:=
  \chi _{_N}    |x|^{p-2   }   \left( \tilde{W}+M  \right)   \left|  \tilde{W} \right|^{p-2}
 \tilde{W} .
 $$
  We notice that $b$ is bounded and continuous  and in view of $U \leq c \rho$, $f$ satisfies 
  $$|f|  \leq c |x|^{p-2} 
  $$
  therefore $$f\in L^{\infty}(0,T: L^q(B_{N+2})  )
  $$ for any $q <\frac{N+2}{2-p} $. Therefore we have that 
  $$\tilde{W} \in W^{1,q}(0,T: L^q(B_{N+2})) \cap   L^q(0,T:W^{2,q}(B_{N+2}))$$
  see for instance Quittner-Souplet \cite{qs},  Theorem  48.1 p. 438. 
  Since $$ \frac{N+2}{2-p} >\frac{N+2}{2-\frac{N}{N-1}} = \frac{(N+2)(N-1)}{N-2} >N+3,$$
and  thanks to the Sobolev embedding  (see for instance Theorem 5.4, Adams \cite{adams}, p. 97) we have  that 
  $$\tilde{W} \in C^{0,\alpha}((0,T) \times B_{N+2})  )$$
  for $\alpha \leq 1- \frac{N+3}{q}.$ It implies that 
   \begin{equation} \label{regularidad}  U \in C^{0, \alpha}(I_{T}) \end{equation}  for  any $T <T_{bu}$. Then, for any $\epsilon >0$ we have that 
   $$g(t):=U(t,\epsilon) \in C^{0, \alpha}(0,T_{bu})$$ and $U$ satisfies 
   $$ \left\{ \begin{array}{l}  \displaystyle  U_{t} - \rho^{\frac{2N-2}{N}} U_{\rho \rho} + \tilde{b}   {U}_{\rho} = \tilde{f}, \qquad   \epsilon<\rho<1, \quad 0<t<T_{bu} 
  ,  \\ [4mm]  \displaystyle 
  U(t,\epsilon)= g(t), \quad U(1,t)=0, 
 \\ [4mm]  \displaystyle  U(0, \rho)= U_{0}(\rho), \quad    \epsilon<\rho<1. 
 \end{array} \right. $$
  for 
$$\tilde{b}:= \chi _{_N}    \rho^{(2-p) \frac{N-1}{N}}  \left|  U \right|^{p-2}U$$
$$\tilde{f}: = \chi _{_N}    \rho^{(2-p) \frac{N-1}{N}   }  M  \left|  U \right|^{p-2}U.$$
Since $\tilde{b}, \tilde{f}  \in C^{0,\beta} ( (0,T) \times (\epsilon,1)) $ for $\beta = (p-1) \alpha$,  we have, thanks 
  to Theorem 5.6, Lieberman \cite{lieberman}, p. 90, that
  $$ U \in C_{t,x}^{1,2}( (0,T) \times (\epsilon,1)), $$ which implies, in view of (\ref{regularidad}),   the wished regularity.
   \qed
\end{proof}

  %
  %

  %
  %
  %
  %
  %
  %
  %
  %
  %
  %
  %
  %
  %
  %
  %
  %
  %
  %
  %
  
  %
  %
  %
  
  %

  %
  %
  %
  %
  %
  %
  %
  %
  %
  %
  %
  %
  %
  %
  %
  %
  %
  %
  %
  
  %
  %
  %
  
  %
 \section{Blow up of solutions. Proof of Theorem \ref{t1}.} \label{s6}  
 \setcounter{equation}{0}
 
  We  consider the function
$$ \phi(\rho ,t)=  \left\{
 \begin{array}{l}   \displaystyle 
 \phi_1(t, \rho)  ,  \quad  0<\rho \leq \rho_1,  
\\ [4mm] \displaystyle 
 \phi_2(t,\rho), 
 \quad   \rho_1 <\rho < 1, 
\end{array}
 \right.
$$
for $\phi_1$ and $\phi_2$ defined in (\ref{phi1}) and (\ref{phi2}) respectively. 
Notice that, thanks to Lemma \ref{l3.1} and Lemma \ref{l3.2} we have that 
$$\mathcal{L}(\phi) \leq 0, \qquad \mbox{ for }  \rho \neq \rho_i, \quad  (i=1,2).$$
Since 
$$\lim_{\rho\rightarrow \rho_1^{-}} \phi_{1\rho } (t, \rho ) =   \frac{\gamma a \rho_1^{\gamma-1}}{(\rho_1^{\gamma} + a)^2}  \leq   \phi_{1 } (t, \rho_1) \frac{\gamma  }{\rho_1 }   $$
and   
$$\phi_{2\rho } (t, \rho)
 =  \left\{ \begin{array}{ll}   - \beta( 1 + \kappa \frac{2 \rho -(\rho_2 + \rho_1) }{(\rho_2- \rho_1)} )  ,   &  \rho_1 <\rho< \rho_2
 \\  - \beta  ,  &  \rho_2 <\rho< 1
   \end{array} \right. $$
we have that 
$$ \lim_{\rho\rightarrow \rho_1^{+}} \phi_{2\rho } (t, \rho)
 =     ( \kappa-1)\beta(t)=  \frac{( \kappa-1)}{1- \rho_1} \phi_1(t, \rho_1).$$
In view of 
$$ \frac{ \kappa-1}{1- \rho_1} = \frac{\gamma  }{\rho_1 } $$
i.e. 
$$ \kappa =  1+  \gamma \frac{(1- \rho_1)}{\rho_1}= 1+ \gamma$$ we have that $\phi_{\rho}$ is a continuous function at $\rho= \rho_1$. 
In the same way we compute  $\phi_{2\rho } $ at  $ \rho_2$ to get 
$$\lim_{\rho\rightarrow \rho_{2}^{-}} \phi_{2\rho } (t, \rho ) =  - \beta( 1 + \kappa ) < - \beta = \lim_{\rho\rightarrow \rho_{2}^{+}} \phi_{2\rho } (t, \rho )$$
which implies  that the first derivative of $\phi_2$ respect to $\rho$ presents a positive jump at $\rho= \rho_2$ and therefore
$$\mathcal{L}(\phi) \leq 0, \qquad \mbox{ for }  \rho  \in   (0,1).$$
Lemma \ref{lemma4.4} and Lemma \ref{lemma4.5}  provide that for any regular  solution $U$ to (\ref{eqU}) with initial data 
$$U(0, \rho) \geq \phi(0,\rho)$$
  satisfies 
\begin{equation}
 \label{5.1} 
U (t, \rho) \geq \phi(t,\rho), \quad \mbox{ for }  t<T_{max}. \end{equation}
Notice that for $\rho= |a(t)|^{\frac{1}{\gamma}}$ we have 
$$\lim_{ t \rightarrow T_{max} }   \phi(t,  |a(t)|^{\frac{1}{\gamma}})=\frac{1}{2}$$
and $$   \lim_{ t \rightarrow T_{max} } a(t)=0.$$
Following standard arguments (see for instance \cite{jl}) and thanks to Lemma \ref{lemma4.4} we end the proof.  
\qed
\textsc{ \underline{Acknowledgment}.}

The author wants to  thank to   the anonymous  reviewers and also to  professor  Michael Winkler, for their   helpful comments and suggestions.
The author is  supported by Ministerio  de Ciencia e Innovaci\'on, Spain, under grant mumber
  MTM2017-83391-P.

\end{document}